\newcommand{\be}{\begin{eqnarray}}
\newcommand{\ee}{\end{eqnarray}}
\newcommand{\beq}{\begin{equation}}
\newcommand{\eeq}{\end{equation}}
\newcommand{\beqn}{\begin{equation*}}
\newcommand{\eeqn}{\end{equation*}}
\newtheorem{thm}{Theorem}
\newtheorem{prop}[thm]{Proposition}
\newtheorem{cor}[thm]{Corollary}
\newtheorem{lem}[thm]{Lemma}
\newtheorem{defn}[thm]{Definition}
\newtheorem{remark}[thm]{Remark}
\newtheorem*{thm1'}{Theorem 1'}
\newcommand\bfh{{\mathbf h}}
\newcommand{\ve}{\varepsilon}
\newcommand{\rd}{\mathrm{d}}
\newcommand\bzero{\mathbf{0}}
\def\bB{\mathbf{B}}
\def\bE{\mathbf{E}}
\def\bF{\mathbf{F}}
\def\bc{\mathbf{c}}
\def\be{\mathbf{e}}
\def\bq{\mathbf{q}}
\def\bu{\mathbf{u}}
\def\bv{\mathbf{v}}
\def\cA{\mathcal{A}}
\def\cB{\mathcal{B}}
\def\cC{\mathcal{C}}
\def\cD{\mathcal{D}}
\def\cF{\mathcal{F}}
\def\cI{\mathcal{I}}
\def \cJ{\mathcal{J}}
\def\cK{\mathcal{K}}
\def\cM{\mathcal{M}}
\def\cN{\mathcal{N}}
\def\cQ{\mathcal{Q}}
\def\cS{\mathcal{S}}
\def\cU{\mathcal{U}}
\def\cZ{\mathcal{Z}}
\def\IE{{\mathbb E}}
\def\IH{{\mathbb H}}
\def\IK{{\mathbb K}}
\def\IN{{\mathbb N}}
\def\IR{{\mathbb R}}
\def\IS{{\mathbb S}}
\def\IT{{\mathbb T}}
\def\IZ{{\mathbb Z}}
\def\fm{\mathfrak{m}}
\def\fs{\mathfrak{s}}
\def\fS{\mathfrak{S}}
\def\ft{\mathfrak{t}}
\def\numscatt{\fs}
\begin{document}

\title{Dispersing billiards with moving scatterers}

\author[Mikko Stenlund]{Mikko Stenlund}
\address[Mikko Stenlund]{
Department of Mathematics, University of Rome ``Tor Vergata''\\
Via della Ricerca Scientifica, I-00133 Roma, Italy; Department of Mathematics and Statistics, P.O.\ Box 68, Fin-00014 University of Helsinki, Finland.}
\email{mikko.stenlund@helsinki.fi}
\urladdr{http://www.math.helsinki.fi/mathphys/mikko.html}

\author[Lai-Sang Young]{Lai-Sang Young}
\address[Lai-Sang Young]{
Courant Institute of Mathematical Sciences\\
New York, NY 10012, USA.}
\email{lsy@cims.nyu.edu}
\urladdr{http://www.cims.nyu.edu/~lsy/ }

\author[Hongkun Zhang]{Hongkun Zhang}
\address[Hongkun Zhang]{Department of Mathematics \& Statistics\\University of Massachusetts\\Amherst, 01003, USA.}
\email{hongkun@math.umass.edu}
\urladdr{http://www.math.umass.edu/~hongkun/}

\keywords{Memory loss, dispersing billiards, time-dependent dynamical systems, non-stationary compositions, coupling}
\subjclass[2000]{60F05; 37D20, 82C41, 82D30}



\begin{abstract}
We propose a model of Sinai billiards with moving scatterers, in which the locations 
and shapes of the scatterers may change by small amounts between collisions. 
Our main result is the exponential loss of memory of initial data at uniform rates, 
and our proof consists of a coupling argument
for non-stationary compositions of maps similar to classical billiard maps. This can be 
seen as a prototypical result on the statistical properties of time-dependent dynamical systems.
\end{abstract}

\maketitle


\subsection*{Acknowledgements}
Stenlund is supported by the Academy of Finland; he also
wishes to thank Pertti Mattila for valuable correspondence. Young is supported
by NSF Grant DMS-1101594, and Zhang is supported by NSF Grant DMS-0901448.


\section{Introduction}
\subsection{Motivation}\label{sec:motivation} The physical motivation for our paper is a setting in which
a finite number of larger and heavier particles move about slowly as they are bombarded
by a large number of lightweight (gas) particles. Following the language of billiards, 
we refer to the heavy particles as \emph{scatterers}. In classical billiards theory,
scatterers are assumed to be stationary, an assumption justified by first letting the ratios
of masses of heavy-to-light particles
tend to infinity. We do not fix the scatterers here. Indeed the system may be open --- gas particles can be injected or ejected, heated up or cooled down. We consider a window of observation $[0,T], \ T \le \infty$, and assume that during this time interval
the total energy stays uniformly below a constant value~$E>0$. This places an upper bound proportional to~$\sqrt{E}$ on the translational and rotational speeds of the scatterers. The constant of proportionality depends inversely on the masses and moments of inertia of the scatterers. Suppose the scatterers are also pairwise repelling due to an interaction with a short but positive effective range, such as a weak Coulomb force, whose strength tends to infinity with the inverse of the distance. The distance between any pair of scatterers has then a lower bound, which in the Coulomb case is proportional to~$1/E$. In brief, fixing a maximum value for the total energy $E$, the scatterers are guaranteed to be uniformly bounded away from each other; and assuming that the ratios of masses 
are sufficiently large, the scatterers will move arbitrarily slowly. 
Our goal is to study the dynamics of a tagged gas particle in such a system on the 
time interval $[0,T]$. As a simplification we assume our tagged particle is passive: it is
massless, does not interact with the other light particles, and does not interfere 
with the motion of the scatterers. It experiences an elastic collision each time it 
meets a scatterer, and moves on with its own energy unchanged.\footnote{The model 
here should not be confused with \cite{ChernovDolgopyatBBM}, which describes
the motion of a \emph{heavy} particle bombarded by a fast-moving light particle 
reflected off the walls of a bounded domain.}
This model was proposed in the paper~\cite{OttStenlundYoung}.

The setting above is an example of a \emph{time-dependent dynamical system}.
Much of dynamical systems theory as it exists today is concerned with autonomous
systems, i.e., systems for which the rules of the
dynamics remain constant through time. Non-autonomous systems studied include
those driven by a time-periodic or random forcing (as described by SDEs), or 
more generally, systems driven by another autonomous dynamical system 
(as in a skew-product setup).
For time-varying systems without any assumption of periodicity or
stationarity, even the formulation of results poses obvious mathematical
challenges, yet many real-world systems are of this type. Thus while the 
moving scatterers model above is of independent interest, we had 
another motive for undertaking the present project: we wanted to use this 
prototypical example to catch a glimpse of the challenges ahead, and at the same
time to identify techniques of stationary theory that 
carry over to time-dependent systems.

\subsection{Main results and issues}\label{sec:issues}
We focus in this paper on the evolution of densities.
 Let $\rho_0$ be an initial distribution, and $\rho_t$ its time evolution. 
In the case of an autonomous system with good statistical properties,
one would expect $\rho_t$ to tend to the system's natural invariant distribution
(e.g. SRB measure) as $t \to \infty$. The question is:
How quickly is $\rho_0$ ``forgotten"? Since ``forgetting" the features of an initial 
distribution is generally associated with mixing of the dynamical system, one may pose
the question as follows: Given two initial distributions $\rho_0$ and $\rho'_0$, 
how quickly does $|\rho_t-\rho'_t|$ tend to zero (in some measure of distance)?
In the time-dependent case, $\rho_t$ and $\rho'_t$ 
may never settle down, as the rules of the dynamics may be changing perpetually. 
Nevertheless the question continues to makes sense. We say a system has \emph{exponential memory loss} if $|\rho_t-\rho'_t|$ 
decreases exponentially with time. 

Since memory loss is equivalent to mixing for a fixed map,
a natural setting with exponential memory loss for time-dependent sequences is when
the maps to be composed have, individually, strong mixing properties, and 
the rules of the dynamics, or the maps to be composed, vary slowly.
(In the case of continuous time, this is equivalent to the vector field changing very 
slowly.) In such a setting, we may think of $\rho_t$ above as slowly varying
as well. Furthermore, in the case of exponential loss of memory, we may view these
probability distributions as representing, after an initial transient, \emph{quasi-stationary states}.

Our main result in this paper is the exponential memory loss of initial data for the collision
maps of 2D models of the type described in Section~\ref{sec:motivation}, where the scatterers are assumed to be
moving very slowly. Precise results are formulated in Section~\ref{sec:statements}. Billiard maps with fixed,
convex scatterers are known to have exponential correlation decay; thus the setting
in Section~\ref{sec:motivation} is a natural illustration of the scenario in the last paragraph.
(Incidentally, when the source and target configurations differ, the collision
map does not necessarily preserve the usual invariant measure). 
 
If we were to iterate a
single map long enough for exponential mixing to set in, then change the map ever
so slightly so as not to disturb the convergence in $|\rho_t-\rho'_t|$ already achieved,
and iterate the second map for as long as needed before making an even smaller
change, and so on, then exponential loss of memory for the sequence is immediate
for as long as all the maps involved are individually exponentially mixing. 
This is not the type of result we are after. A more meaningful result --- and this is what
we will prove --- is one in which one identifies a space of dynamical systems and 
an upper bound in the speed with which the sequence is allowed to vary, and prove exponential memory loss for any sequence in this space that varies slowly enough.
This involves more than the exponential mixing property of individual maps;
the class of maps in question has to satisfy a \emph{uniform mixing condition 
for slowly-varying compositions}. 
This in some sense is the crux of the matter.

A technical but fundamental issue has to do with stable 
and unstable directions, the staples of hyperbolic dynamics. 
In time-dependent systems with slowly-varying parameters, approximate 
stable and unstable directions can be defined, but they depend on the time interval
of interest, e.g., which direction is contracting depends on how long one chooses to look.
Standard dynamical tools have to be adapted to the new setting of 
non-stationary sequences; consequently technical estimates of single billiard maps 
have to be re-examined as well. 

\subsection{Relevant works}\label{sec:background}
Our work lies at the confluence of the following two sets of results:

The study of statistical properties of billiard maps in the case of fixed convex scatterers 
was pioneered  
by Sinai et al \cite{Sinai_1970,BSC_1990,BSC_1991}.
The result for exponential correlation decay was first proved in \cite{Young_1998};
another proof using a coupling argument is given in~\cite{Chernov-BilliardsCoupling}.
Our exposition here follows closely that in~\cite{Chernov-BilliardsCoupling}. Coupling, which is
the main tool of the present paper, is a standard technique in probability.
To our knowledge it was imported into hyperbolic dynamical systems in~\cite{Young_1999}. The very convenient formulation in~\cite{Chernov-BilliardsCoupling} was first
used in~\cite{ChernovDolgopyatBBM}. (Despite appearing in 2009, the latter circulated as a preprint already in 2004.) We refer the reader to~\cite{ChernovMarkarian_2006}, which
contains a detailed exposition of this and many other important
technical facts related to billiards.

The paper~\cite{OttStenlundYoung} proved exponential loss of memory for expanding 
maps and for one-dimensional piecewise expanding maps with slowly varying parameters. An earlier study in the same spirit is~\cite{LasotaYorke_1996}. A similar result was obtained for topologically transitive Anosov diffeomorphisms in two dimensions in~\cite{Stenlund_2011} and for piecewise expanding maps in higher dimensions in~\cite{GuptaOttTorok_2012}. We mention also~\cite{AyyerStenlund}, where exponential memory loss was established for arbitrary sequences of finitely many toral automorphisms satisfying a common-cone condition. Recent central-limit-type results in the time-dependent setting can be found in~\cite{ConzeRaugi,Nandori_2012,Stenlund_2012}.

\subsection{About the exposition} One of the goals of this paper is to stress the (strong) similarities between stationary dynamics and their time-dependent counterparts, and to highlight at the same time the new issues that need to be addressed. For this reason,
and also to keep the length of the manuscript reasonable, we have elected to omit
the proofs of some technical preliminaries for which no substantial modifications
are needed from the fixed-scatterers case, referring the reader instead to 
\cite{ChernovMarkarian_2006}. We do not know to what degree we have succeeded,
but we have tried very hard to make transparent the logic of the argument, 
in the hope that it will be accessible to a wider audience. The main ideas 
are contained in Section~\ref{sec:outline}.

The paper is organized as follows.
In Section~\ref{sec:statements} we describe the model in detail, after which we immediately state our main results in a form as accessible as possible, leaving generalizations
for later. Theorems~\ref{thm:weak_conv_compact}--\ref{thm:eq_mixing} of Section~\ref{sec:statements} are the main results of this paper, and Theorem~\ref{thm:weak_conv} 
is a more technical formulation which easily implies the other two.  Sections~\ref{sec:preliminariesI} and~\ref{sec:preliminariesII} contain a collection of facts about dispersing billiard maps that are easily adapted to the time-dependent case.  Section~\ref{sec:outline} gives a nearly complete outline of the proof of Theorem~\ref{thm:weak_conv}. 
In Section~\ref{sec:preliminaries_continued} we continue with technical preliminaries necessary for a rigorous proof of that theorem. Unlike Sections~\ref{sec:preliminariesI} and~\ref{sec:preliminariesII}, more stringent conditions on the speeds at which the scatterers are allowed to move are needed for the results in Section~\ref{sec:preliminaries_continued}. 
In Section~\ref{sec:proof_countable} we prove Theorem~\ref{thm:weak_conv} in 
the special case of initial distributions supported on countably many curves, and 
in Section~\ref{sec:completion} we prove the extension of Theorem 4 to more general 
settings. Finally, we collect in the Appendix some proofs which are deferred to the end in order not to disrupt the flow of the presentation in the body of the text. 


\section{Precise statement of main results}\label{sec:statements}

\subsection{Setup}\label{sec:setup}
We fix here a space of scatterer configurations, and make precise
the definition of billiard maps with possibly different source and target configurations.

Throughout this paper, the physical space of our system is the $2$-torus $\IT^2$. 
We assume, to begin with (this condition will be relaxed later on), 
that the number of scatterers as well as their sizes and shapes 
are fixed, though rigid rotations and translations are permitted. Formally, let 
$B_1,\dots,B_\numscatt$ be pairwise disjoint closed convex domains in
$\mathbb R^2$ with $C^3$ boundaries of strictly positive curvature. 
In the interior of each $B_i$ we fix a reference point $c_i$ and a unit vector 
$u_i$ at $c_i$.
A {\it configuration}~$\cK$ of $\{B_1,\dots,B_\numscatt\}$ in $\IT^2$ is
an embedding of $\cup_{i=1}^\numscatt B_i$ into $\IT^2$, one that maps
 each $B_i$ isometrically onto a set we call~$\bB_i$. Thus $\cK$ can
be identified with a point $(\bc_i,\bu_i)_{i=1}^\numscatt \in 
{(\IT^2\times \IS^1)}^\numscatt$, $\bc_i$ and $\bu_i$ being images of 
$c_i$ and $u_i$. The space of configurations~$\mathbb K_0$ is
the subset of ${(\IT^2\times \IS^1)}^\numscatt$ for which the~$\bB_i$ are
pairwise disjoint and every half-line in~$\IT^2$ meets a scatterer non-tangentially. More conditions will be imposed on $\cK$ later on.
The set $\mathbb K_0$ inherits the Euclidean metric from ${(\IT^2\times \IS^1)}^\numscatt$, and the $\ve$-neighborhood of $\cK$ is denoted by $\cN_\ve(\cK)$.

Given a configuration $\cK\in\IK_0$, let $\tau^{\min}_\cK$ be the shortest length of a line segment in $\IT^2\setminus \cup_{i=1}^\numscatt\bB_i$ which originates and terminates (possibly tangentially) in the set $\cup_{i=1}^\numscatt\partial\bB_i$,\footnote{In general, $\tau^{\min}_\cK\neq \min_{1\leq i<j\leq \numscatt}\operatorname{dist}(\bB_i,\bB_j)$, as the shortest path could be from a scatterer back to itself. If one lifts the $\bB_i$ to 
$\mathbb R^2$, then $\tau^{\min}_\cK$ is the shortest distance between distinct 
images of lifted scatterers.} and let $\tau^{\max}_\cK$ be the supremum of the lengths of all line segments in the closure of $\IT^2\setminus \cup_{i=1}^\numscatt\bB_i$ which originate and terminate \emph{non-tangentially} in the set $\cup_{i=1}^\numscatt\partial\bB_i$ 
(this segment may meet the scatterers tangentially between its endpoints). 
As a function of $\cK$, $\tau^{\min}_\cK$ is continuous, but 
 $\tau^{\max}_\cK$ in general is only upper semi-continuous. 
 Notice that $0<\tau^{\min}_\cK<\tau^{\max}_\cK \le \infty$.

\begin{figure}
\includegraphics{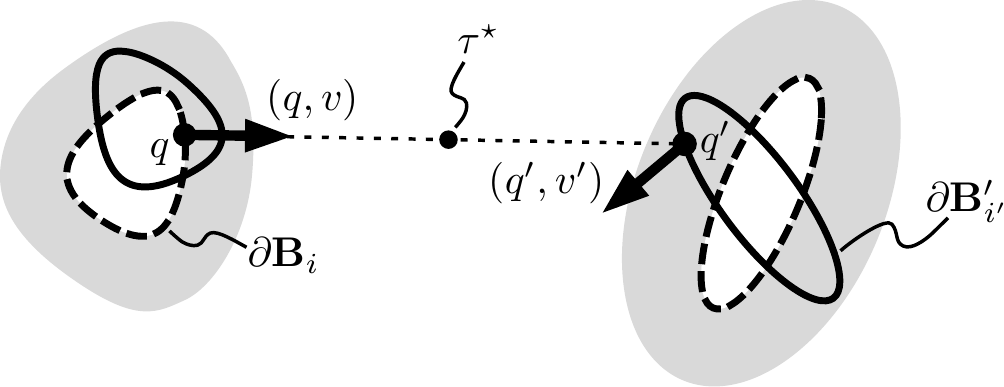}
\caption{Rules of the dynamics. Scatterers in source configuration $\cK$ and target configuration $\cK'$ are drawn in dashed and solid line, respectively.  A particle shoots off the boundary of a scatterer $\bB_i$ at the point $q$ with unit velocity $v$ and exits the gray buffer zone $\bB_{i,\beta}\setminus\bB_i$. Before it re-enters the buffer zone of any scatterer $\bB_j$, the configuration is switched instantaneously from $\cK$ to $\cK'$ at some time $\tau^\star$ during mid-flight. The particle then hits the boundary of a scatterer $\bB_{i'}'$ elastically at the point $q'$, resulting in post-collision velocity~$v'$.}
\label{fig:map}
\end{figure}

A basic question is: Given $\cK, \cK'\in\IK_0$, is there always
a well-defined billiard map (analogous to classical billiard maps) 
with source configuration $\cK$ and target configuration $\cK'$?
That is to say, if $\bB_1,\dots,\bB_\numscatt$ are the scatterers 
in configuration~$\cK$,  and $\bB_1',\dots,\bB_\numscatt'$ are the corresponding 
scatterers in~$\cK'$, is there a well defined mapping 
$$\bF_{\cK',\cK}: 
T^+_1(\cup_{i=1}^\numscatt \partial \bB_i) \to T^+_1(\cup_{i=1}^\numscatt \partial \bB'_i)$$
where $T^+_1(\cup_{i=1}^\numscatt \partial \bB_i)$ is
the set of $(q,v)$ such that $q \in \cup_{i=1}^\numscatt \partial \bB_i$ and
$v$ is a unit vector at $q$ pointing into the region 
$\IT^2\setminus \cup_{i=1}^\numscatt\bB_i$, and similarly for  
$T^+_1(\cup_{i=1}^\numscatt \partial \bB'_i)$?
Is the map $\bF_{\cK',\cK}$ uniquely defined, or does it depend on 
when the changeover from $\cK$
to $\cK'$ occurs? The answer can be very general, but let us confine ourselves  
to the special case where $\cK'$ is very close to $\cK$ and the changeover occurs 
when the particle is in ``mid-flight" (to avoid having scatterers land 
on top of the particle, or meet it at the exact moment of the changeover).

To do this systematically, we introduce the idea of a buffer zone. For $\beta>0$, we let 
$\bB_{i,\beta}\subset \IT^2$ denote the $\beta$-neighborhood of $\bB_i$,
and define $\tau^\textrm{esc}_\beta$, the {\it escape time} 
from the $\beta$-neighborhood of $\cup_i \bB_i$, to be 
the maximum length of a line in
$\cup_{i=1}^\numscatt(\bB_{i,\beta}\setminus\bB_i)$ connecting $\cup_{i=1}^\numscatt\partial\bB_i$ to $\cup_{i=1}^\numscatt \partial(\bB_{i,\beta})$.
We then fix a value of $\beta>0$ small enough that 
$\tau^\textrm{esc}_\beta<\tau^{\min}_\cK-\beta$, and require that 
$\bB_i'\subset \bB_{i,\beta}$ for each $i=1,\dots,\numscatt$. 
Notice that $\beta<\tau^\textrm{esc}_\beta$, so that $\beta <\tau^{\min}_\cK/2$,
implying in particular that the neighborhoods $\bB_{i,\beta}$ are pairwise disjoint.
For a particle starting from $\cup_{i=1}^\numscatt \partial \bB_i$, its trajectory is guaranteed
to be outside of $\cup_{i=1}^\numscatt\bB_{i,\beta}$ during the 
time interval $(\tau^\textrm{esc}_\beta, \tau^{\min}_\cK-\beta)$:
reaching $\cup_{i=1}^\numscatt\bB_{i,\beta}$ before time
$\tau^{\min}_\cK-\beta$ would contradict the definition of $\tau^{\min}_\cK$.
We permit the configuration change to take place at any time 
$\tau^\star \in (\tau^\textrm{esc}_\beta,\tau^{\min}_\cK-\beta)$.
Notice that $\tau^\textrm{esc}_\beta$ depends only on the shapes of the scatterers, 
not their configuration, and that the billiard trajectory starting from $\cup_i \partial \bB_i$ and ending in $\cup_i \partial \bB'_i$ does not depend on the precise 
moment $\tau^\star$ at which the configuration is updated. For the billiard map 
$\bF_{\cK',\cK}$ to be defined, every particle trajectory 
starting from $\cup_{i=1}^\numscatt \partial \bB_i$ 
must meet a scatterer in~$\cK'$. This is guaranteed by $\cK'\in\IK_0$, due to the requirement that any half-line intersects a scatterer boundary.

To summarize, we have argued that given $\cK, \cK'\in\IK_0$, there is a canonical 
way to define $\bF_{\cK',\cK}$  if 
$\bB_i' \subset
\bB_{i,\beta}$ for all $i$ where $\beta=\beta(\tau^{\min}_{\cK})>0$ depends only on 
$\tau^{\min}_{\cK}$ (and the curvatures of the $B_i$), and the flight time $\tau_{\cK',\cK}$ satisfies
$\tau_{\cK',\cK} \ge \tau^{\min}_\cK-\beta \ge \tau^{\min}_\cK/2$.

Now we would like to have all the $\bF_{\cK',\cK}$ operate on a single phase space 
$\cM$, so that our {\it time-dependent billiard system} defined by compositions of 
these maps can be studied in a way analogous to iterated classical billiard maps.
As usual, we let $\Gamma_i$ be a fixed clockwise parametrization by arclength 
of $\partial B_i$, and let 
\beqn
\cM = \cup_i\cM_i
\quad\text{with}\quad
\cM_i = \Gamma_i\times [-\pi/2,\pi/2]. 
\eeqn 
Recall that each $\cK \in \IK$ is defined by an isometric embedding of  
$\cup_{i=1}^\numscatt B_i$ into $\IT^2$. This embedding 
extends to a neighborhood of $\cup_{i=1}^\numscatt B_i \subset \mathbb R^2$, 
inducing a diffeomorphism $\Phi_\cK : \cM \to T^+_1(\cup_{i=1}^\numscatt \partial \bB_i)$.
For $\cK, \cK'$ for which  $\bF_{\cK',\cK}$ is defined then, we have
\beqn
F_{\cK',\cK} := \Phi_{\cK'}^{-1} \circ\bF_{\cK',\cK} \circ \Phi_{\cK}:\cM\to\cM\ .
\eeqn
Furthermore, given a sequence $(\cK_n)_{n=0}^N$ of configurations, we let  $F_n = F_{\cK_n,\cK_{n-1}}$ assuming this mapping
is well defined, and write
\beqn
\cF_{n+m,n}=F_{n+m}\circ\dots\circ F_n \qquad\text{and}\qquad \cF_{n}=F_{n}\circ\dots\circ F_1
\eeqn
 for all $n, m$ with $1\leq n\leq n+m\leq N$.

It is easy to believe --- and we will confirm mathematically 
--- that $F_{\cK',\cK}$ has many of
the properties of the section map of the 2D periodic Lorentz gas. 
The following differences, however, are of note: unlike
classical billiard maps, $F_{\cK',\cK}$ is in general
{\it neither one-to-one nor onto}, and as a result of that it also \emph{does not preserve the usual measure} on $\cM$. This is illustrated in Figure~\ref{fig:preimages}.

\begin{figure}
\includegraphics{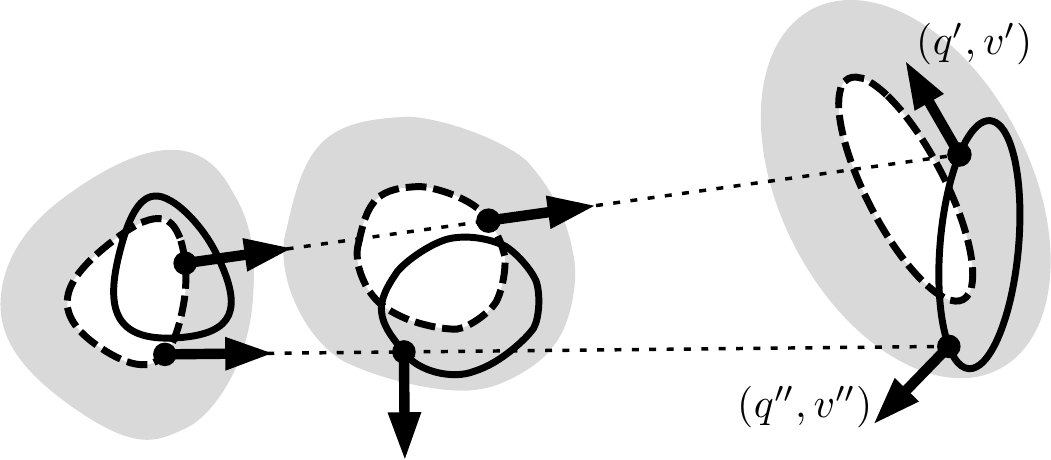}
\caption{Action of the map $F_{\cK',\cK}$. With the same conventions as in Figure~\ref{fig:map}, the point in $\cM$ corresponding to the plane vector $(q',v')$ has more than one preimage, whereas the point corresponding to $(q'',v'')$ has no preimage at all.}
\label{fig:preimages}
\end{figure}


\subsection{Main results}\label{sec:results}

First we introduce the following {\it uniform finite-horizon condition}:
For $\ft, \varphi>0$, $\varphi$ small,
we say $\cK \in \IK_0$ has $(\ft, \varphi)$-horizon 
if every directed open line segment in $\IT^2$ of length $\ft$ meets 
a scatterer $\cB_i$ of $\cK$ at an angle $> \varphi$ (measured from 
its tangent line), with the segment 
approaching this point of contact from $\IT^2 \setminus \bB_i$.
Other intersection points between our line segment and $\cup_j \partial \bB_j$
are permitted and no requirements are placed on the angles at which
they meet; we require
only that there be at least one intersection point meeting the condition above.
Notice that this condition is not affected by the sudden appearance or
disappearance of nearly tangential collisions of billiard trajectories with
scatterers as the positions of the scatterers are shifted.

The space in which we will permit our time-dependent configurations to
wander is defined as follows: We fix $0<\bar\tau^{\min}<\ft<\infty$ and
$\varphi>0$, chosen so that the set 
$$
\IK = \IK (\bar\tau^{\min}, (\ft, \varphi)) =  \{\cK \in \IK_0 : \bar\tau^{\min}<\tau^{\min}_\cK \ {\rm and} \ 
\cK \ {\rm has} \ (\ft, \varphi){\rm -horizon} \}
$$
is nonempty. Clearly, $\IK$ is an open set, and its closure $\bar{\IK}$
as a subset of ${(\IT^2\times \IS^1)}^\numscatt$ consists of those configurations
whose $\tau^{\min}$ will be $\ge \bar\tau^{\min}$, and line segments
of length $\ft$ with their end points added will meet scatterers with angles
$\ge \varphi$.  
From Section~\ref{sec:setup},
we know that there exists $\bar\beta = \beta(\bar\tau^{\min})>0$ 
such that $F_{\cK',\cK}$ is defined for all $\cK,\cK' \in \IK$ with 
$\bB_i' \subset \bB_{i,\bar\beta}$ for all $i$ where $\{\bB_i\}$ and $\{\bB_i'\}$ are the scatterers 
in $\cK$ and $\cK'$ respectively. For simplicity, we will call the pair
$(\cK, \cK')$ \emph{admissible} (with respect to $\IK$) if they satisfy the condition above.
Clearly, if $\cK,\cK' \in \IK$ are such that $d(\cK,\cK')<\ve$ for small
enough $\ve$, then the pair is admissible.
We also noted in Section~\ref{sec:setup} that for all admissible pairs, 
\beq\label{eq:flight_times}
\bar\tau^{\min}/2 \leq \tau_{\cK',\cK}\leq \ft \ .
\eeq

We will denote by ${|f|}_\gamma$ the H\"older constant of a $\gamma$-H\"older continuous $f:\cM\to\IR$.

Our main result is

\smallskip
\begin{thm}\label{thm:weak_conv_compact}
Given $\IK = \IK (\bar\tau^{\min}, (\ft, \varphi))$, there exists $\ve>0$ such that the following holds. Let $\mu^1$ and $\mu^2$ be probability measures on $\cM$, with
strictly positive, $\tfrac16$-H\"older continuous densities~$\rho^1$ and~$\rho^2$ 
with respect to the measure $\cos\varphi\,\rd r \,\rd\varphi$.
Given $\gamma>0$, there exist $0<\theta_\gamma<1$ and $C_\gamma>0$ such that
\beqn
\left|\int_{\cM} f\circ \cF_n\, \rd\mu^1 - \int_{\cM} f\circ \cF_n\, \rd\mu^2\right| \leq C_\gamma ({\| f \|}_\infty + {|f|}_\gamma)\theta_\gamma^n, \quad n\leq N,
\eeqn
for all finite or infinite sequences $(\cK_n)_{n=0}^N\subset\IK$ ($N\in \IN\cup\{\infty\}$) satisfying $d(\cK_{n-1},\cK_n)<\ve$ for $1\leq n\leq N$, and all
 $\gamma$-H\"older continuous $f:\cM\to\IR$. The constant $C_\gamma=C_\gamma(\rho^1,\rho^2)$ depends on the densities $\rho^i$ through the H\"older constants of $\log\rho^i$, while $\theta_\gamma$ does not depend 
on the $\mu^i$. Both constants depend on $\IK$ and $\ve$. 
\end{thm}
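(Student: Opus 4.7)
The plan is to establish the estimate through a coupling argument for standard families of unstable curves, adapting the fixed-scatterer scheme of Chernov to the non-stationary composition $\cF_n = F_n \circ \cdots \circ F_1$. Because $d(\cK_{n-1},\cK_n) < \ve$ with $\ve$ small, each $F_n$ belongs to a compact family of billiard-type maps which share a common unstable cone field, common stable cone field, and uniform hyperbolicity/distortion constants; this uniformity (together with (\ref{eq:flight_times})) is what allows single-map billiard technology to carry over.

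First I would set up the class of admissible standard pairs $(W,\rho)$, where $W \subset \cM$ is a short unstable curve whose tangents lie in the common unstable cone and $\rho$ is a density on $W$ with a dynamically natural H\"older-type regularity (the exponent $1/6$ appearing in the hypothesis is tailored to absorb the effect of the homogeneity strips near $\cos\varphi = 0$). The hypothesis that $\rho^1,\rho^2$ are strictly positive and H\"older with respect to $\cos\varphi\,\rd r\,\rd\varphi$ gives canonical decompositions $\mu^i = \int (W,\rho^i_W)\,\rd\lambda^i(W)$ into admissible standard families; the $\log\rho^i$ H\"older constants control the dependence of $C_\gamma$ on the densities.

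Second, establish a time-dependent \emph{growth lemma}: pushing an admissible standard family forward by $\cF_{n+m,n}$ and cutting the images at the singularity set of each $F_k$ produces again an admissible standard family, with the fraction of mass supported on pieces of length below any prescribed $\delta$ decaying geometrically in $m$, uniformly over all admissible sequences in $\IK$. This uses expansion dominating fragmentation on the reference configuration together with a perturbation argument showing that slow variation of $\cK_n$ does not spoil the one-step expansion/complexity balance --- the singularity set of $F_{n+1}$ is an $O(\ve)$-perturbation of that of the fixed map close to $\cK_n$. Complementarily, one gets uniform distortion control on the "long" pieces.

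Third, perform the coupling. Fix a \emph{magnet} $\Lambda \subset \cM$ foliated by long reference unstable curves, such that after a bounded waiting time both $\cF_n\mu^1$ and $\cF_n\mu^2$ charge $\Lambda$ uniformly (a consequence of the growth lemma and of the $(\ft,\varphi)$-horizon, which prevents long stretches of trajectory from avoiding $\Lambda$). Inside $\Lambda$, match a definite proportion of $\cF_n\mu^1$ and $\cF_n\mu^2$ by a fiberwise coupling along stable holonomies between curves in the two families; matched mass is canceled in $\int f\circ \cF_n\,\rd\mu^1 - \int f\circ \cF_n\,\rd\mu^2$ at cost $(|f|_\gamma \cdot \text{diameter}^\gamma)$ per matched pair, while the unmatched mass is reduced by a factor $<1$. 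Iterating yields $\theta_\gamma \in (0,1)$ depending on $\gamma$ through the matching cost, and the $\|f\|_\infty$ term absorbs a uniform tail estimate on very short pieces.

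The principal obstacle, and what distinguishes this from the fixed-map case, is the growth lemma in step two: there is no invariant stable foliation and no fixed singularity set, so one must formulate one-step fragmentation bounds entirely in terms of the parameters $\bar\tau^{\min},\ft,\varphi$ and $\ve$, and verify that the inductive constants do not deteriorate along the sequence. Secondary technical points --- that $F_{\cK',\cK}$ need not be injective or surjective, and does not preserve $\cos\varphi\,\rd r\,\rd\varphi$ --- complicate bookkeeping of reference measures but do not break the coupling, which is run directly on the forward images $\cF_{n,*}\mu^i$ rather than on any invariant measure.
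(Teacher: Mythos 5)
Your proposal is in the right spirit---coupling on a standard (measured unstable) family, a time-dependent growth lemma, a magnet, stable holonomies---and this is indeed the paper's overall architecture. But there is a genuine gap in step three, and it is precisely the place the paper identifies as the ``crux of the matter.''

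You assert that $\cF_n\mu^1$ and $\cF_n\mu^2$ charge a \emph{fixed} magnet $\Lambda$ uniformly ``as a consequence of the growth lemma and of the $(\ft,\varphi)$-horizon.'' Neither delivers this. The growth lemma controls lengths (a large fraction of the pushed-forward mass sits on unstable pieces longer than some $\delta$) but says nothing about \emph{where} those long pieces are; and the finite-horizon condition yields uniform bounds on flight times and derivatives but does not force images to intersect a prescribed set. What is actually needed is a \emph{uniform mixing on finite scales} statement: that a definite fraction of the image of any long unstable curve crosses the magnet ``properly'' within a uniformly bounded number of steps. In the fixed-scatterer case this is a known mixing property of $\widetilde F^n$ (Bunimovich--Sinai--Chernov); in the time-dependent case the paper obtains it by a perturbative argument (its Proposition~29) that compares $\cF_s$ to $\widetilde F^s$ for a fixed reference configuration $\widetilde\cK$ over a \emph{finite} time horizon $s$. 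This forces the magnet to be anchored at a reference configuration. Moreover, since the ``stable'' half of the magnet is a Cantor set of local stable manifolds for the future sequence $F_{n+1},F_{n+2},\dots$, the magnet $\fS_n$ is inherently time-dependent and cannot be a single fixed $\Lambda$; only its unstable spine $\widetilde W$ (a genuine unstable manifold of $\widetilde F=F_{\widetilde\cK,\widetilde\cK}$) is fixed while the sequence stays near $\widetilde\cK$. Consequently the paper proves a \emph{local} version first (its Theorem~4, for sequences ``adapted'' to a string of reference configurations $\widetilde\cK_q$, each contributing its own $s(\widetilde\cK_q)$, $\zeta(\widetilde\cK_q)$, and magnet) and then deduces Theorem~1 by a compactness argument: $\bar\IK$ is compact, so a finite set of reference configurations whose neighborhoods cover $\IK$ suffices, and any slowly-varying sequence is adapted to a subsequence of these. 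Your invocation of ``a compact family'' gestures toward this but is used only to claim uniform hyperbolicity constants; it does not supply the finite atlas of reference configurations and magnets that the argument actually hinges on. Without the perturbative uniform-mixing step and the local-to-global reduction via compactness, the claim that coupling occurs at a definite rate does not follow from what you have written.
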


\smallskip
None of the constants in the theorem depends on $N$. We have included
the finite $N$ case to stress that our results do not depend on
knowledge of scatterer movements in the infinite future; requiring
such knowledge would be unereasonable for time-dependent systems.
The notation ``$(\cK_n)_{n=0}^N, \ N\in \IN\cup\{\infty\}$'' is intended as shorthand for $\cK_1, \dots, \cK_N$ for $N<\infty$, and
$\cK_1, \cK_2, \dots$ (infinite sequence) for $N=\infty$.

\bigskip

Our next result is an extension of Theorem~\ref{thm:weak_conv_compact} to a situation
where
the geometries of the scatterers are also allowed to vary with time. 
We use $\kappa$ to denote the curvature of the scatterers, and use the convention
that $\kappa>0$ corresponds to strictly convex scatterers.
For $0< \bar\kappa^{\min}<\bar\kappa^{\max}<\infty$, $0< \bar\tau^{\min}<\ft < \infty, \ \varphi>0$ and $0<\Delta<\infty$, we let 
$$
\widetilde\IK = \widetilde\IK(\bar\kappa^{\min}, \bar\kappa^{\max}; \bar\tau^{\min}, (\ft, \varphi); \Delta)
$$
denote the set of configurations $\cK = \bigl((\bB_1, o_1), \dots, (\bB_\fs, o_\fs)\bigr)$
where $(\bB_1, \dots, \bB_\fs)$ is an ordered set of disjoint scatterers on $\IT^2$,
$o_i \in \partial \bB_i$ is a marked point for each $i$, $\fs \in \mathbb N$ is arbitrary, 
and the following conditions are satisfied:

\smallskip

(i) the scatterer boundaries $\partial \bB_i$ are $C^{3+\mathrm{Lip}}$ with $\|D(\partial \bB_i)\|_{C^2}< \Delta$ and $\mathrm{Lip}(D^3(\partial \bB_i))< \Delta$,

(ii) the curvatures of $\partial \bB_i$ lie between~$\bar\kappa^{\min}$ 
and~$\bar\kappa^{\max}$, and 

(iii) $\tau_\cK^{\min} > \bar\tau^{\min}$, and $\cK$ has $(\ft, \varphi)$-horizon. 

\smallskip

\noindent In (i), $\|D(\partial \bB_i)\|_{C^2}$ and $\mathrm{Lip}(D^3(\partial \bB_i))$ are defined to be $\max_{1\leq k\leq 3}\| D^k\gamma_i\|_\infty$ and $\mathrm{Lip}(D^3\gamma_i)$, respectively,
where $\gamma_i$ is the unit speed clockwise parametrization of $\bB_i$.
For two configurations $\cK = ((\bB_1, o_1), \dots, (\bB_\fs, o_\fs))$ and
$\cK' = ((\bB'_1, o'_1), \dots, (\bB'_\fs, o'_\fs))$ with the same number of scatterers, 
we define 
$d_3(\cK, \cK')$ to be the maximum of $\max_{i\le\fs}\sup_{x\in\cM}d_\cM(\hat\gamma_i(x),\hat\gamma_i'(x))$ and $\max_{i \le \fs}\max_{1\leq k\leq 3} \|D^k\hat \gamma_i - D^k\hat \gamma'_i\|_\infty$
where $\hat \gamma_i: \IS^1 \to \IT^2$ denotes the constant speed clockwise parametrization of 
$\partial \bB_i$ with $\hat \gamma_i(0)=o_i$, $\hat \gamma_i'$ is the corresponding
parametrization of~$\partial \bB'_i$ with $\hat \gamma'_i(0)=o'_i$, and~$d_\cM$ is the natural distance on~$\cM$. 
The definition of \emph{admissibility} for~$\cK$ and~$\cK'$ is as above, and 
the billiard map~$F_{\cK',\cK}$ is defined as before for admissible pairs. 
Configurations~$\cK, \cK'$ with different numbers of scatterers are not admissible, and 
the distance between them is set arbitrarily to be $d_3(\cK, \cK')=1$.

\smallskip
\begin{thm} \label{var_geom} The statement of Theorem \ref{thm:weak_conv_compact} holds verbatim with $(\IK, d)$ replaced by $(\widetilde \IK, d_3)$.\footnote{The differentiability assumption on the scatterer boundaries can be relaxed, but the pursuit of minimal technical conditions is not the goal of our paper.}
\end{thm}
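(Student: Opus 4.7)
The plan is to reduce Theorem~\ref{var_geom} to the architecture developed for Theorem~\ref{thm:weak_conv_compact} by showing that every quantitative estimate used there depends only on the uniform parameters $\bar\kappa^{\min}, \bar\kappa^{\max}, \bar\tau^{\min}, \ft, \varphi, \Delta$ defining $\widetilde\IK$, and not on any feature of the scatterers beyond these bounds. The new wrinkle is that the phase space $\cM_\cK$ itself now depends on $\cK$. To handle this I would fix a common abstract phase space $\cM^\circ = \bigsqcup_{i=1}^\fs (\IS^1\times [-\pi/2,\pi/2])$ and, for each $\cK \in \widetilde\IK$, use the constant-speed parametrization $\hat\gamma_\cK = (\hat\gamma_1,\dots,\hat\gamma_\fs)$ to build a diffeomorphism $\Psi_\cK : \cM^\circ \to \cM_\cK$. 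Setting $\widetilde F_{\cK',\cK} = \Psi_{\cK'}^{-1} \circ F_{\cK',\cK} \circ \Psi_\cK : \cM^\circ \to \cM^\circ$ puts all maps on the same space, and when $d_3(\cK,\cK')$ is small the map $\Psi_{\cK'}^{-1} \circ \Psi_\cK$ is $C^3$-close to the identity with bounds depending only on $\Delta$.

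The first block of work is to verify that, for $\ve$ sufficiently small depending on $\widetilde\IK$, every pair $(\cK_{n-1}, \cK_n)$ with $d_3(\cK_{n-1}, \cK_n) < \ve$ is admissible (so $\widetilde F_n$ is well-defined and its flight time satisfies (\ref{eq:flight_times}) with $\bar\tau^{\min}$ slightly degraded), and that a slightly weakened $(\ft', \varphi')$-horizon condition holds uniformly. Then I would walk through the key estimates of Sections~\ref{sec:preliminariesI}--\ref{sec:preliminariesII} -- expansion along unstable cones, the Sinai--Chernov--Markarian distortion bound, the $p$-metric regularity, and the curvature/boundedness of stable/unstable curves under $\widetilde F_n$ -- and observe that each constant is a function only of $\bar\kappa^{\min}, \bar\kappa^{\max}, \Delta$ and the flight-time bounds (cf.~\cite{ChernovMarkarian_2006}), hence uniform over $\widetilde\IK$. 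The same applies to the singularity set of $\widetilde F_n$: uniform convexity plus $C^{3+\mathrm{Lip}}$ regularity yields a uniformly transverse family of $C^2$ singularity curves, and these curves move by $O(\ve)$ in the $C^1$ sense as $\cK_{n-1}\leadsto \cK_n$.

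With these uniform preliminaries in hand, the strategy of Section~\ref{sec:outline}, the growth lemma of Section~\ref{sec:preliminaries_continued}, and the coupling argument of Section~\ref{sec:proof_countable} apply essentially verbatim, because all three are driven solely by the hyperbolicity/distortion/horizon data just verified. One must redefine standard families and the reference measures $\cos\varphi\,\rd r\,\rd\varphi$ on $\cM^\circ$ via pullback by $\Psi_\cK$; since $\|D^k(\Psi_{\cK'}^{-1}\circ\Psi_\cK - \mathrm{id})\| = O(\ve)$ for $k\leq 3$, the associated Jacobians are $C^2$-close to $1$ with bounds depending only on $\Delta$, so the class of standard pairs and the $\tfrac16$-H\"older class of admissible densities are preserved, the H\"older constant of $\log \rho^i$ suffering only a multiplicative degradation controlled by $\Delta$. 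The reduction of Section~\ref{sec:completion} from countably supported measures to general H\"older densities transfers unchanged.

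The main obstacle is not any single hard estimate but the bookkeeping of perturbations: one must check that as the scatterer shapes evolve slowly, all geometric data -- singularity curves, homogeneity strips, invariant cone fields, and the reparametrizations $\Psi_{\cK'}^{-1}\circ\Psi_\cK$ themselves -- move by at most $O(\ve)$ in the appropriate $C^k$ topology, so that the constants entering the coupling machinery can be chosen independently of the sequence $(\cK_n)$. Once this uniformity is established, the proof of Theorem~\ref{thm:weak_conv_compact} carries over line for line, yielding Theorem~\ref{var_geom} with a rate $\theta_\gamma$ depending on $\widetilde\IK$ only through $\bar\kappa^{\min}, \bar\kappa^{\max}, \bar\tau^{\min}, \ft, \varphi$ and $\Delta$.
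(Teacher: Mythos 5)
Your proposal captures the easy half of the argument --- checking that the ``nonperturbative'' preliminaries (cone fields, expansion rates, distortion, the Growth Lemma) depend only on $\bar\kappa^{\min},\bar\kappa^{\max},\bar\tau^{\min},\ft,\varphi,\Delta$ --- but it misses the essential new ingredient and rests on a claim that is actually false, namely that \emph{every} quantitative estimate in the proof of Theorem~\ref{thm:weak_conv_compact} ``depends only on the uniform parameters and not on any feature of the scatterers beyond these bounds.'' This is not so. The constants $s(\widetilde\cK)$ and $\zeta(\widetilde\cK)$ in Proposition~\ref{prop:magnets}, which quantify how long $\widetilde F_{\widetilde\cK}^n$ must be iterated for a fixed fraction of an arbitrary long unstable curve to cross a magnet built near a chosen unstable manifold $\widetilde W$ of $\widetilde F_{\widetilde\cK}$, are genuine mixing-time constants of the \emph{specific} billiard $\widetilde\cK$. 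There is no known effective estimate of such constants in terms of curvature and horizon bounds alone; indeed, producing such a uniform bound is essentially the content of Theorem~\ref{thm:eq_mixing}, which is a \emph{consequence} of Theorem~\ref{var_geom}, not an input to it. The same is true of the recovery constants $C(\widetilde\cK)$, $r(\widetilde\cK)$ and of $\widetilde N(\widetilde\cK)$, $\tilde\ve(\widetilde\cK)$ in Theorem~\ref{thm:weak_conv}: they all depend on the chosen reference configuration through $\widetilde W$ and its stable/unstable structure.

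The paper's actual route avoids this by keeping the local Theorem~\ref{thm:weak_conv} (and its variable-geometry analogue, Theorem~4$'$) \emph{configuration-dependent}, and then extracting uniform constants by covering the configuration space with finitely many neighborhoods $\cN_{\frac12\tilde\ve(\widetilde\cK_q)}(\widetilde\cK_q)$. For Theorem~\ref{thm:weak_conv_compact} this finite subcover is immediate because $\bar\IK$ sits compactly in the finite-dimensional space $(\IT^2\times\IS^1)^\fs$. For $\widetilde\IK$ the configuration space is (a priori) infinite-dimensional, and the paper must \emph{prove} precompactness of $\widetilde\IK$ in the $d_3$ metric inside a slightly enlarged $\widetilde\IK'$. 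This is done via the Arzel\`a--Ascoli theorem, using exactly the $C^{3+\mathrm{Lip}}$ bound $\Delta$ in condition~(i) of the definition of $\widetilde\IK$ to supply the requisite uniform boundedness and equicontinuity of $\hat\gamma_i$ and $D^k\hat\gamma_i$, $1\le k\le 3$, together with the observation that $\bar\tau^{\min}$ forces the number of scatterers $\fs$ to be bounded. Your proposal omits this compactness step entirely, and without it the reduction from Theorem~4$'$ to the uniform Theorem~\ref{var_geom} does not go through. (Your observation about the identification $\Psi_\cK$ of phase spaces is correct but is already built into the paper's $\Phi_\cK$; it is not where the difficulty lies.)
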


\medskip
\noindent
{\bf Theorems 1' and 2':} The regularity assumption on the measures $\mu^i$ 
in Theorems~\ref{thm:weak_conv_compact}--\ref{var_geom} above can be much relaxed. It suffices to assume that the $\mu^i$ have regular conditional
measures on unstable curves; they can be singular in the transverse direction
and can, e.g., be supported on a single unstable curve. Convex combinations
of such measures are also admissible. Precise conditions are given in 
Section \ref{sec:preliminariesII}, after we have introduced the relevant technical 
definitions. Theorems 1'--2', which are the extensions of Theorems~\ref{thm:weak_conv_compact}--\ref{var_geom} respectively
to the case where these relaxed conditions on $\mu^i$ are permitted, are stated
in Section~\ref{sec:primed_statements}. 

\bigskip
Theorems~\ref{var_geom} and 2' obviously apply as a special case to classical billiards, giving uniform bounds of the kind above for all $F_{\cK,\cK}$, $\cK \in \widetilde\IK$. 
It is also a standard fact that correlation decay results can be deduced from 
the type of convergence in Theorems~\ref{thm:weak_conv_compact}--\ref{var_geom}.
To our knowledge, the following result on correlation decay for classical billiards is new. (See also pp.~149--150 in~\cite{ChernovEyink_etal} for related observations.) The proof can be found in Section~\ref{sec:var_geom}.

\smallskip
\begin{thm}\label{thm:eq_mixing} Let $\mu$ denote the measure obtained by normalizing $\cos\varphi\,\rd r\,\rd\varphi$ to a probability measure. Let $\widetilde\IK$ be fixed,
and let $\gamma>0$ be arbitrary. Then for any $\gamma$-H\"older continuous~$f$ and any~$\tfrac16$-H\"older continuous $g$, there exists a constant $C'_\gamma$ such that
\beqn
\left|\int f\circ F^n\cdot g\,\rd\mu - \int f\,\rd\mu\int g\,\rd\mu \right| \leq C_\gamma'\,\theta_\gamma^n
\eeqn
hold for all $n\geq 0$ and for all $F = F_{\cK,\cK}$ with $\cK \in \widetilde\IK$.
Here $\theta_\gamma$ is as in the theorems above. The constant $C_\gamma'$ depends on $\|f\|_\infty$, $|f|_\gamma$, $\|g\|_\infty$ and $|g|_\frac16$.
\end{thm}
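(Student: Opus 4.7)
The plan is to reduce Theorem~\ref{thm:eq_mixing} to Theorem~2' (the extension of Theorem~\ref{var_geom} with relaxed regularity on the initial measures) by writing the correlation as the difference of two integrals against probability measures with $\tfrac{1}{6}$-H\"older densities. The essential input beyond Theorem~2' is a classical fact: for $F = F_{\cK,\cK}$---the stationary billiard map of the fixed configuration $\cK$---the normalized measure $\mu$ with density proportional to $\cos\varphi$ is $F$-invariant. Setting $\bar g := \int g \, \rd\mu$, this invariance immediately gives
\[
\int f\circ F^n \cdot g \, \rd\mu - \int f\,\rd\mu \int g\,\rd\mu = \int f\circ F^n \cdot (g - \bar g) \, \rd\mu.
\]

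To recast the right-hand side in the form of Theorem~2', I would pick a constant $C := 2\|g\|_\infty + 1$, large enough to ensure $g + C \geq 1$ on $\cM$, and define the strictly positive, $\tfrac{1}{6}$-H\"older densities
\[
\rho^1 := \frac{g + C}{\bar g + C}, \qquad \rho^2 := 1,
\]
with corresponding probability measures $\mu^1 = \rho^1\mu$ and $\mu^2 = \mu$. A short computation, invoking the $F$-invariance of $\mu$ once more, produces the key identity
\[
\int f\circ F^n \cdot g \, \rd\mu - \int f\,\rd\mu \int g\,\rd\mu = (\bar g + C)\left(\int f\circ F^n \,\rd\mu^1 - \int f\circ F^n \,\rd\mu^2\right).
\]

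Applying Theorem~2' to the constant sequence $\cK_n \equiv \cK \in \widetilde\IK$---which trivially satisfies $d_3(\cK_{n-1}, \cK_n) = 0 < \ve$ for any $\ve > 0$---bounds the parenthesized quantity above by $C_\gamma(\|f\|_\infty + |f|_\gamma)\,\theta_\gamma^n$, with the same $\theta_\gamma$ as in Theorem~\ref{thm:weak_conv_compact}, uniformly over $\cK \in \widetilde\IK$. By the statement of Theorem~\ref{thm:weak_conv_compact}, $C_\gamma$ depends on the $\rho^i$ only through the H\"older constants of $\log\rho^i$; since $\log\rho^2 \equiv 0$ and the mean value inequality gives $|\log(g+C)|_{\frac{1}{6}} \leq |g|_{\frac{1}{6}}$ owing to $g + C \geq 1$, the constant $C_\gamma$ is controlled in terms of $|g|_{\frac{1}{6}}$ alone. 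Absorbing the prefactor $\bar g + C \leq 3\|g\|_\infty + 1$ yields the claimed bound with $C'_\gamma$ depending only on $\|f\|_\infty$, $|f|_\gamma$, $\|g\|_\infty$, and $|g|_{\frac{1}{6}}$.

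The only point that requires care---the mild obstacle---is checking that $\mu^1, \mu^2$ meet the hypotheses of Theorem~2' (that is, the regularity conditions on conditional measures along unstable curves discussed in Section~\ref{sec:preliminariesII}). This is essentially automatic: both measures are absolutely continuous with strictly positive $\tfrac{1}{6}$-H\"older densities with respect to $\mu$, so disintegrating along any measurable family of unstable curves inherits the same regularity up to the (smooth) Jacobian of the disintegration. No further work beyond bookkeeping of constants is needed.
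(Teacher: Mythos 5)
Your proof is correct and follows essentially the same strategy as the paper's: exploit the $F$-invariance of $\mu$ to center $g$, realize the correlation as $(\bar g + C)(\int f\circ F^n\,\rd\mu^1 - \int f\circ F^n\,\rd\mu^2)$ for probability measures with strictly positive $\tfrac16$-H\"older densities, and invoke Theorem~\ref{var_geom}. The only cosmetic differences are the choice of shift (the paper uses $a = 1-\inf g$, you use $C=2\|g\|_\infty+1$) and the final paragraph about disintegration along unstable curves, which is unnecessary since Theorem~\ref{var_geom} is already stated directly for measures with $\tfrac16$-H\"older densities relative to $\cos\varphi\,\rd r\,\rd\varphi$.
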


We remark that Theorem~\ref{thm:eq_mixing} can also be formulated for sequences of maps. In that
case the quantity bounded is $\int f\circ \cF_n\cdot g\,\rd\mu - \int f\circ\cF_n\,\rd\mu \, \int g\,\rd\mu $ and $\mu$ is an arbitrary measure satisfying the conditions in Theorems 1' and 2'. 
The proof is unchanged.

In addition to the broader class of measures, Theorem~\ref{var_geom} could be extended to less regular observables~$f$, which would allow for a corresponding generalization of Theorem~\ref{thm:eq_mixing}. In particular, the observables could be allowed to have discontinuities at the singularities of the map $F$; see, e.g.,~\cite{ChernovMarkarian_2006}. In order to keep the focus on what is new, we do not pursue that here.

\bigskip

We state one further extension of the above theorems, to include the situation where the test particle is also under the influence of an external field. Given an admissible pair $(\cK,\cK')$ in $\widetilde\IK$ and a vector field $\bE=\bE(\bq,\bv)$, 
we define first a continuous time system in which the trajectory of 
the test particle between collisions is determined by the equations
\beqn
\dot\bq = \bv \quad\text{and} \quad \dot\bv = \bE,
\eeqn
where $\bq$ is the position and $\bv$ the velocity of the particle, together with the initial condition. For the sake of simplicity, let us assume that the field is isokinetic --- that is, $\bv\cdot\bE=0$ --- which allows to normalize $|\bv| = 1$. This class of forced billiards includes ``electric fields with Gaussian thermostats'' studied in~\cite{MoranHooverBestiale_1987,ChernovEyink_etal} and many other papers. (Instead of the speed, more general integrals of the motion could be considered, allowing for other types of fields, such as gradients of weak potentials; see~\cite{Chernov_2001,Chernov_2008}.) 
Assuming that the field~$\bE$ is smooth and small, the trajectories are almost linear, and a billiard map $F^\bE_{\cK',\cK}:\cM\to\cM$ can be defined exactly as before. (See Section~\ref{sec:proof_fields} for more details.) Note that $F^\bzero_{\cK',\cK} = F_{\cK',\cK}$.

The setup for our time-dependent systems result is as follows: We consider
the space $\widetilde \IK \times \IE$ where $\widetilde \IK$ is as above and
$\IE = \IE(\ve^\bE)$ for some $\ve^\bE>0$ is the set of fields 
$\bE\in C^{2}$ with
$\|\bE\|_{C^2} = \max_{0\leq k\leq 2}\|D^k\bE\|_\infty < \ve^\bE$.  In the theorem below, it is to be understood that $F_n = F^{\bE_n}_{\cK_n,\cK_{n-1}}$ and $\cF_n = F_n\circ\dots\circ F_1$.

\medskip
\noindent
{\bf Theorem E.}
{\it Given $\widetilde \IK$, there exist $\ve>0$ and $\ve^\bE>0$
such that the statement of Theorem \ref{var_geom} holds for all sequences
$((\cK_n, \bE_n))_{n \le N}$ in $\widetilde \IK \times \IE(\ve^\bE)$ satisfying
$d_{3}(\cK,\cK') < \ve$ for all $n \le N$.} 

\medskip

Like the zero-field case, Theorem~E also admits a generalization of measures (and observables) and also implies an exponential correlation bound.

\subsection{Main technical result}\label{sec:tech}

To prove Theorem~\ref{thm:weak_conv_compact}, we will, in fact, prove the following technical result. 
All configurations below are in $\IK$. 
Let $(\widetilde \cK_q)_{q=1}^Q$ ($Q \in \mathbb Z^+$ arbitrary)
be a sequence of configurations, $(\tilde \ve_q)_{q=1}^Q$ 
a sequence of positive numbers, and $(\widetilde N_q)_{q=1}^Q$ a sequence of positive integers. We say the configuration sequence $(\cK_n)_{n=0}^N$ (arbitrary $N$)
is {\it adapted} to $(\widetilde \cK_q,\tilde\ve_q,\widetilde N_q)_{q=1}^Q$ 
if there exist numbers $0=n_0<n_1<\dots<n_Q = N$ such that for $1\leq q\leq Q$,
we have $n_q-n_{q-1}\geq \widetilde N_q$ and $\cK_n\in \cN_{\tilde\ve_q}(\widetilde \cK_q)$ for $n_{q-1}\leq n\leq n_q$. That is to say, we think of the $(\widetilde \cK_q)_{q=1}^Q$ 
as reference configurations, and view the sequence of interest, $(\cK_n)_{n=0}^N$, as going from one reference configuration to the next, spending a long time ($\ge \widetilde N_q$) 
near (within $\tilde \ve_q$ of) each $\widetilde \cK_q$.

\smallskip
\begin{thm}\label{thm:weak_conv}
For any $\cK\in\IK$, there exist $\widetilde N(\cK)\geq 1$ and $\tilde\ve(\cK)>0$ such that the following holds for every sequence of reference configurations 
$(\widetilde \cK_q)_{q=1}^Q$ ($Q<\infty$) with
 $\widetilde \cK_{q+1}\in \cN_{\tilde\ve(\widetilde \cK_q)}(\widetilde\cK_{q})$ for 
 $1\leq q< Q$ and every sequence $(\cK_n)_{n=0}^N$ adapted to $(\widetilde \cK_q,\tilde\ve(\widetilde \cK_q),\widetilde N(\widetilde \cK_q))_{q=1}^Q$, all configurations
 to be taken in $\IK$:
Let $\mu^1$ and $\mu^2$ be probability measures on $\cM$, with
strictly positive, $\tfrac16$-H\"older continuous densities~$\rho^1$ and~$\rho^2$
with respect to the measure $\cos\varphi\,\rd r\, \rd\varphi$. Given any $\gamma>0$, 
there exist $0<\theta_\gamma<1$ and $C_\gamma>0$ such that
\begin{equation}
\label{eq:convergerate}
\left|\int_{\cM} f\circ \cF_n\, \rd\mu^1 - \int_{\cM} f\circ \cF_n\, \rd\mu^2\right| \leq C_\gamma ({\| f \|}_\infty + {|f|}_\gamma)\theta_\gamma^n,  \qquad n\leq N,
\end{equation}
for  all $\gamma$-H\"older continuous $f:\cM\to\IR$.
The constants $C_\gamma$ and $\theta_\gamma$ depend on the collection
$\{\widetilde \cK_q, 1\leq q\leq Q\}$ {\rm (see Remark \ref{rem:reference_order} below)}; additionally 
$C_\gamma=C_\gamma(\rho^1,\rho^2)$ depends on the densities $\rho^i$ through the H\"older constants of $\log\rho^i$, while $\theta_\gamma$ does not depend 
on the $\mu^i$.
\end{thm}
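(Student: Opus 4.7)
The plan is to prove Theorem~\ref{thm:weak_conv} by a coupling argument in the spirit of \cite{Chernov-BilliardsCoupling}, adapted to non-stationary compositions. The first reduction is standard: disintegrate $\mu^1$ and $\mu^2$ into conditional measures supported on unstable curves in $\cM$ (curves whose tangent vectors lie in the unstable cone, with densities obtained from $\rho^i$ and the Jacobian of the disintegration). The H\"older regularity of $\rho^i$ is used here, and it is exactly what guarantees that the conditional densities along unstable curves are uniformly regular enough to enter the standard-pair machinery. After this reduction, it suffices to prove the convergence estimate \eqref{eq:convergerate} for pairs of initial standard families (the more general class of measures that Theorems~1'--2' deal with), with constants depending only on the structural parameters of $\IK$. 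A Lipschitz approximation of H\"older $f$ on $\cM$ reduces the test observables to Lipschitz, losing only a factor that is absorbed into $C_\gamma$.

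The core is then the time-dependent analogue of the usual three-step coupling: (i)~a \emph{growth lemma} controlling the fragmentation of unstable curves under the compositions $\cF_{n+k,n}$ --- short pieces created by cutting at the singularity set of $F_{\cK_n,\cK_{n-1}}$ must, in total mass, remain a small fraction, because expansion along the unstable cone dominates the production rate of short pieces; (ii)~after a fixed number of steps most of the mass is carried by \emph{long} unstable curves whose forward images stretch across a ``magnet'' --- a Cantor rectangle of stable/unstable curves --- defined near a reference configuration $\widetilde\cK_q$; (iii)~two families whose members both cross the magnet admit a definite fraction of mass that can be \emph{coupled}, i.e.\ matched on the same stable leaf, so that after coupling the two measures produce identical contributions to $\int f\circ\cF_n\,\rd\mu^i$ from then on (since points on a common stable leaf approach exponentially). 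Iterating, the uncoupled mass decays geometrically, producing the factor $\theta_\gamma^n$; the dependence of $C_\gamma$ on $|\log\rho^i|_{1/6}$ enters only through the initial regularity of the conditional densities.

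The adapted hypothesis is used as follows. Near each reference $\widetilde\cK_q$ one builds, once and for all, a magnet $\fR_q$ out of stable/unstable curves for the \emph{autonomous} map $F_{\widetilde\cK_q,\widetilde\cK_q}$, using Sinai's construction as recalled in \cite{ChernovMarkarian_2006}. The choice of $\tilde\ve(\widetilde\cK_q)$ is made small enough that (a)~the unstable and stable cone fields for $F_{\cK,\cK'}$ with $\cK,\cK'\in\cN_{\tilde\ve}(\widetilde\cK_q)$ are strictly invariant in the appropriate sense, and (b)~the magnet $\fR_q$ is still a ``near-magnet'' for such compositions --- the quantitative hyperbolic and distortion bounds of Sections~\ref{sec:preliminariesI}--\ref{sec:preliminaries_continued} change by an arbitrarily small amount. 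The integer $\widetilde N(\widetilde\cK_q)$ is chosen so that, starting from any standard family spread over $\fR_q$, steps (i)--(iii) above produce a definite fraction of coupled mass within $\widetilde N(\widetilde\cK_q)$ iterates. Because $\tilde\ve(\widetilde\cK_{q+1})$ and $\widetilde N(\widetilde\cK_{q+1})$ depend only on $\widetilde\cK_{q+1}$, the construction glues across the transitions $q\mapsto q+1$: at the junction the surviving uncoupled mass is carried by unstable curves whose regularity is uniformly bounded, so it serves as a legitimate input standard family for the next block.

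The main obstacle, and the step that genuinely differs from the classical case, is the growth lemma and the coupling step in the presence of configuration change. Unlike iteration of a single map, here the singularity curves of $F_n$ depend on $\cK_{n-1},\cK_n$, and vary (slightly) with $n$; consequently the ``short pieces'' produced at each step are cut by an always-moving target. One must show that the one-step expansion factor along the unstable cone still beats the measure of a uniform neighborhood of the singularity set, uniformly over all $\cK,\cK'$ with $d(\cK,\cK')<\tilde\ve$ in the given reference neighborhood, and that distortion of the disintegration densities along unstable curves remains controlled under the non-stationary composition. These facts are exactly what Sections~\ref{sec:preliminariesI}--\ref{sec:preliminaries_continued} are designed to provide, and once they are in hand the coupling scheme proceeds formally as in the stationary setting, with only bookkeeping changes to account for the transitions between the $Q$ reference blocks.
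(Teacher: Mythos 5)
The overall architecture of your argument — disintegration into standard/unstable families, Lipschitz approximation, growth lemma, coupling on a magnet, geometric decay of the uncoupled mass, gluing across the $Q$ reference blocks — matches the paper's strategy. But there is a genuine gap at the heart of the coupling step.

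You propose to build, \emph{once and for all}, a magnet $\fR_q$ out of stable and unstable curves for the autonomous map $F_{\widetilde\cK_q,\widetilde\cK_q}$, and to couple two measures by matching points on the same stable leaf of this fixed magnet, asserting that "after coupling the two measures produce identical contributions to $\int f\circ\cF_n\,\rd\mu^i$ from then on (since points on a common stable leaf approach exponentially)." This reasoning fails because a stable manifold of the \emph{fixed} map $\widetilde F_q$ is not a stable manifold of the actual sequence $F_{n+1},F_{n+2},\dots$. The "near-magnet" perturbation you invoke does not rescue this: two points initially $\delta$-close on a $\widetilde F_q$-stable leaf, iterated under maps $\ve$-close to $\widetilde F_q$, separate from the true stable direction of the sequence, and after $\sim|\log\delta|/\log\Lambda$ steps the $\cF_{n+m}$-images stop contracting and in general begin diverging exponentially; the problem only compounds when the sequence moves on to the next reference block. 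In short, the very thing the coupling is supposed to buy you — $d(\cF_{n+m}x,\cF_{n+m}y)\to0$ exponentially in $m$ for \emph{all} future $m$ — is lost.

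What the paper actually does is build the magnet $\fS_n$ as a hybrid, time-dependent object: $\widetilde W$ is indeed a fixed unstable manifold of $\widetilde F_q$, but the fibers of the magnet are the \emph{local stable manifolds $W^s_n(x)$ of the actual future sequence} $(F_{n+1},F_{n+2},\dots)$, whose existence, uniform size, density on $\widetilde W$ (Lemma~\ref{lem:fundamental}), absolute continuity, and Jacobian bounds (Lemma~\ref{lem:Jac_holonomy}) all have to be established \emph{for arbitrary admissible sequences}, not just perturbatively near a single $\widetilde F_q$. This is the central technical content of Sections~\ref{sec:LSM}--\ref{sec:magnets} and is exactly what your "once and for all" construction omits. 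Beyond this, your outline also elides the recovery bookkeeping (Lemmas~\ref{lem:gap_proper}--\ref{lem:proper_part} and the $P_k$-recursion in Section~\ref{sec:bookkeeping}) needed to keep the uncoupled mass available for future couplings, and this is not a purely routine "bookkeeping change" — the choice of the spacing $\Delta_0$ between coupling times depends on the constants $C(\widetilde\cK_q)$ and $\zeta(\widetilde\cK_q)$ in a way that is essential to making the geometric decay close. But the non-negotiable correction is the time-dependent stable fibration of the magnet.
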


\begin{remark}\label{rem:reference_order}
We clarify that the constants $C_\gamma$ and $\theta_\gamma$ 
depend on the collection of distinct configurations that appear in the
sequence $(\widetilde \cK_q)_{q=1}^Q$, not on the order in which 
these configurations are listed; in particular, 
each $\widetilde \cK_q$ may appear multiple times. 
This observation is essential for the proofs of Theorems~\ref{thm:weak_conv_compact}--\ref{thm:eq_mixing}.
\end{remark}

\begin{proof}[Proof of Theorem~\ref{thm:weak_conv_compact} assuming Theorem~\ref{thm:weak_conv}]
Given $\IK$, consider a slightly larger $\IK'\supset \bar\IK$, obtained by decreasing $\bar\tau^{\min}$ and $\varphi$ and increasing $\ft$. We apply Theorem 
~\ref{thm:weak_conv} to $\IK'$, obtaining $\tilde\ve(\cK)$ and $\widetilde N(\cK)$
for $\cK \in \IK'$. Since $\bar \IK$ is compact, there exists a finite collection of configurations $(\widetilde\cK_q)_{q\in \cQ}\subset \bar\IK$ such that the sets $\widetilde \cN_q=\cN_{\frac12 \tilde\ve(\widetilde \cK_q)}(\widetilde\cK_{q}) \cap \IK$, $q\in \cQ$, 
form a cover of~$\IK$. Let $\ve_\star = \min_{q\in \cQ} \tilde\ve(\widetilde \cK_q)$
and $N_\star = \max_{q\in\cQ}\widetilde N(\widetilde\cK_q)$. We claim that Theorem~\ref{thm:weak_conv_compact}
holds with $\ve = \ve_\star/(2N_\star)$.
Let $(\cK_n)_{n=0}^N\subset\IK$ with $d(\cK_n, \cK_{n+1})<\ve$ be given. 
Suppose $\cK_0 \in \widetilde \cN_q$.
Then $\cK_i$ is guaranteed to be in $\cN_{\tilde\ve(\widetilde \cK_q)}(\widetilde\cK_{q})$
for all $i < N_\star$. Before the sequence leaves 
$\cN_{\tilde\ve(\widetilde \cK_q)}(\widetilde\cK_{q})$, we select another 
$\widetilde \cN_{q'}$ and repeat the process. 
Thus, the assumptions of Theorem~\ref{thm:weak_conv} are satisfied 
(add more copies of $\cK_N$ at the end if necessary).
 Taking note of Remark~\ref{rem:reference_order}, this yields a uniform rate of memory loss for all sequences. Of course the constants thus obtained for~$\IK$ in Theorem~\ref{thm:weak_conv_compact} are the constants above obtained for the larger~$\IK'$ in Theorem~\ref{thm:weak_conv}.
\end{proof}


{\bf Standing Hypothesis for Sections~\ref{sec:preliminariesI}--\ref{sec:cont_proof}:} We assume $\IK$ as defined 
by $\bar\tau^{\min}$, $\ft$ and $\varphi$ is fixed throughout. For definiteness
we fix also $\bar\beta$, and declare once and for all that all pairs $(\cK, \cK')$ 
for which we consider the billiard map $F_{\cK',\cK}$ are assumed to be admissible, 
as are $(\cK_n, \cK_{n+1})$ in all the sequences $(\cK_n)$ studied. 
These are the only billiard maps we will consider.

\section{Preliminaries I: Geometry of billiard maps}\label{sec:preliminariesI}

In this section, we record some basic facts about time-dependent 
billiard maps related to their hyperbolicity, discontinuities, etc. The results 
here are entirely analogous to the fixed scatterers 
case. They depend on certain geometric facts that are uniform for all the 
billiard maps considered; indeed one does not know from step to step in
the proofs whether
or not the source and target configurations are different. Thus we will state the facts 
but not give the proofs, referring the reader instead
to sources where proofs are easily modified to give the results here. 

An important point is that the estimates
of this section are \emph{uniform}, i.e., the constants in the statements of the lemmas
depend only on $\IK$.

\medskip
\noindent {\it Notation:} Throughout the paper, the length of a smooth curve $W\subset \cM$ is denoted by $|W|$ and the Riemannian measure induced on~$W$ is denoted by~$\fm_W$. Thus, $\fm_W(W)=|W|$. 
We denote by~$\cU_\ve(E)$ the open $\ve$-neighborhood of a set $E$ in the phase space $\cM$. For $x=(r,\varphi) \in \cM$, we denote by~$T_x\cM$ the tangent space of~$\cM$ and by~$D_xF$ the derivative of a map~$F$ at~$x$. Where no ambiguity exists, we sometimes
write~$F$ instead of~$F_{\cK',\cK}$.

\subsection{Hyperbolicity}
Given $(\cK,\cK')$ and a point $x=(r,\varphi) \in \cM$, we let 
$x'=(r',\varphi')=F x$ and compute $D_xF$ as follows:
Let $\kappa(x)$ denote the curvature of $\cup_i\partial B_i$ at the point
corresponding to~$x$, and define $\kappa(x')$ analogously. 
The flight time between $x$ and $x'$ is denoted by $\tau(x)=\tau_{\cK',\cK}(x)$.
Then $D_xF$ is given by 
\beqn 
-\frac{1}{\cos\varphi'}
\begin{pmatrix}
\tau(x) \kappa(x) + \cos\varphi
&
\tau(x)
\\
\tau(x) \kappa(x)\kappa(x') + \kappa(x)\cos\varphi' + \kappa(x')\cos\varphi
&
\tau(x) \kappa(x') + \cos\varphi'
\\
\end{pmatrix}
\eeqn
provided $x\notin F^{-1}\partial\cM$, the discontinuity set of $F$.
This computation is identical to the case with fixed scatterers.
As in the fixed scatterers case, notice that as~$x$ approaches $F^{-1}\partial\cM$, 
$\cos\varphi' \to 0$ and the derivative of the map $F$ blows up.
Notice also that
\beq\label{eq:det}
\det D_x F = \cos\varphi/\cos\varphi',
\eeq
so that $F$ is locally invertible. 

The next result asserts the uniform hyperbolicity of $F$ for orbits
that do not meet $F^{-1}\partial\cM$. 
Let~$\kappa^{\min}$ and~$\kappa^{\max}$ denote the
minimum and maximum curvature of the boundaries of the scatterers~$B_i$.

\begin{lem}[Invariant cones]\label{lem:cones}
The unstable cones
\beqn
\cC_x^u = \{(\rd r,\rd\varphi)\in T_x\cM \,:\, \kappa^{\min}\leq \rd\varphi/\rd r \leq \kappa^{\max}+2/\bar\tau^{\min} \}, \quad x \in \cM,
\eeqn
are $D_xF$-invariant for all pairs $(\cK, \cK')$, i.e., $D_xF (\cC_x^u)\subset 
\cC_{Fx}^u$ for all $x\notin F^{-1}\partial\cM$, and there exist uniform constants $\hat c>0$ and $\Lambda>1$ such that for every $(\cK_n)_{n=0}^N$, 
\beq\label{eq:expansion}
\| D_x\cF_n v \| \geq \hat c \Lambda^n \| v \|
\eeq
for all $n\in\{1,\dots,N\}$, $v\in \cC^u_x$, and $x\notin \cup_{m=1}^N(\cF_m)^{-1}\partial\cM$.

Similarly, the stable cones
\beqn
\cC_x^s = \{(\rd r,\rd\varphi)\in T_x\cM \,:\, -\kappa^{\max}-2/\bar\tau^{\min}\leq \rd\varphi/\rd r \leq  -\kappa^{\min}\}
\eeqn
are $(D_xF)^{-1}$-invariant for all $(\cK, \cK')$, i.e.,  $(D_xF)^{-1} \cC_{Fx}^s\subset \cC_x^s$ for all  $x\notin  \partial\cM \cup F^{-1}\partial\cM$, and for every 
$(\cK_n)_{n=0}^N$, 
\beqn
\| (D_x\cF_n)^{-1} v \| \geq \hat c \Lambda^n \| v \|
\eeqn
for all $n\in\{1,\dots,N\}$, $v\in \cC^s_{\cF_n x}$, and $x\notin \partial\cM\cup \cup_{m=1}^N(\cF_m)^{-1}\partial\cM$.

\end{lem}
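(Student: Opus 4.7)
The explicit formula for $D_xF$ given above depends on $(\cK,\cK')$ only through the flight time $\tau(x)$ and the curvatures $\kappa(x), \kappa(x')$, all of which satisfy uniform bounds over admissible pairs in $\IK$: by~\eqref{eq:flight_times}, $\tau(x)\in[\bar\tau^{\min}/2,\ft]$, and by construction $\kappa\in[\kappa^{\min},\kappa^{\max}]$. Consequently the argument should reduce, essentially verbatim, to the classical single-map case; the constants $\hat c$ and $\Lambda$ that emerge will depend only on these uniform data, and not on the sequence $(\cK_n)$. This is the reason the non-stationarity is, at this level, inconsequential.

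For unstable-cone invariance, the plan is to parametrize a nonzero $v=(\rd r,\rd\varphi)\in\cC^u_x$ by the slope $\cB=\rd\varphi/\rd r\in[\kappa^{\min},\kappa^{\max}+2/\bar\tau^{\min}]$. Substituting $v=(1,\cB)$ into the matrix for $D_xF$ and taking the ratio of the two output components, one recognizes the factor $L:=\tau(\kappa+\cB)+\cos\varphi$ in both entries and obtains the M\"obius-type expression
$$
\cB' \;=\; \kappa(x') \;+\; \frac{\cos\varphi'}{\tau(x)+\cos\varphi/(\kappa(x)+\cB)}.
$$
The lower bound $\cB'\ge\kappa^{\min}$ is immediate, since the correction to $\kappa(x')$ is nonnegative. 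For the upper bound, drop the $\cos\varphi/(\kappa+\cB)$ term from the denominator and use $\cos\varphi'\le 1$ together with $\tau(x)\ge\bar\tau^{\min}/2$ to get $\cB'\le\kappa^{\max}+2/\bar\tau^{\min}$. The invariance of the stable cones is proved by the analogous computation applied to $(D_xF)^{-1}$, which has the same structural form with source and target interchanged and a change of sign accounting for the reflected slope range.

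For the uniform exponential expansion, the main obstacle is that the factor $1/\cos\varphi'$ in $D_xF$ can be arbitrarily large per step near a grazing collision, so there is no uniform per-step expansion in the Euclidean norm. The standard remedy is the adapted ``$p$-metric'' on unstable curves, $\|v\|_p:=\cos\varphi\,|\rd r|$. The first row of $D_xF v$ gives $|\rd r'/\rd r| = L/\cos\varphi'$, and hence the $p$-expansion
$$
\frac{\|D_xFv\|_p}{\|v\|_p} \;=\; \frac{\cos\varphi'}{\cos\varphi}\cdot\frac{L}{\cos\varphi'} \;=\; \frac{\tau(\kappa+\cB)}{\cos\varphi}+1 \;\ge\; 1+\tfrac12\bar\tau^{\min}\kappa^{\min} \;=:\; \Lambda \;>\; 1,
$$
uniformly in $(\cK,\cK')$. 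Iterating through the sequence $\cF_n$ and then comparing the $p$- and Euclidean norms on $\cC^u$ (they are equivalent with constants depending only on $\IK$ and on the initial $\cos\varphi$) yields the stated bound $\|D_x\cF_n v\|\ge\hat c\Lambda^n\|v\|$.

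The stable-cone expansion estimate is obtained symmetrically, by applying the same two steps to $(D_xF)^{-1}$, which represents the time-reversed billiard and satisfies structurally identical bounds (including the same cone-slope range up to sign). The only real work, apart from the algebra above, is of a bookkeeping nature: verify that every inequality uses only quantities controlled uniformly over admissible pairs in $\IK$. Once this is done, each step of $\cF_n$ contributes the same uniform $p$-expansion $\Lambda$, so neither the non-stationarity of the composition nor the length $n$ introduces any new dependence in $\hat c$ or $\Lambda$.
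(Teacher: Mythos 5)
Your cone-invariance computation via the M\"obius formula for the slope, and your identification of a uniform per-step expansion factor in the $p$-metric, are correct and are exactly the standard Chernov--Markarian argument that the paper defers to by citation. The one genuine gap is in the final conversion from $p$-expansion to Euclidean expansion. You observe that the two norms are equivalent on $\cC^u_x$ ``with constants depending only on $\IK$ and on the initial $\cos\varphi$'' and conclude the stated bound $\|D_x\cF_n v\|\ge\hat c\Lambda^n\|v\|$. But that dependence is precisely the problem: since $\|v\|_p/\|v\|=\cos\varphi/\sqrt{1+\cB^2}$ degenerates to $0$ as $x$ approaches a grazing collision, this route only yields $\|D_x\cF_n v\|\ge \hat c(\cos\varphi_0)\Lambda^n\|v\|$ with $\hat c(\cos\varphi_0)\to 0$, whereas the lemma requires a \emph{uniform} $\hat c>0$ (independent of $x$), and that uniformity is used throughout (Growth Lemma, stable manifold lengths, coupling).

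The repair is a short but necessary regrouping rather than a direct norm comparison. Writing $L_j=\tau_j(\kappa_j+\cB_j)+\cos\varphi_j$, the first row of the derivative gives
\[
\bigl|\rd r_n/\rd r_0\bigr|=\prod_{j=0}^{n-1}\frac{L_j}{\cos\varphi_{j+1}}
= L_0\cdot\Bigl(\prod_{j=1}^{n-1}\frac{L_j}{\cos\varphi_j}\Bigr)\cdot\frac{1}{\cos\varphi_n}.
\]
The middle product is your $p$-expansion over steps $1$ through $n-1$ and is $\ge\Lambda^{n-1}$; the leading factor obeys the uniform bound $L_0\ge\tau_0(\kappa_0+\cB_0)\ge\bar\tau^{\min}\kappa^{\min}$ (which stays bounded away from zero even as $\cos\varphi_0\to 0$); and $\cos\varphi_n^{-1}\ge 1$. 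After multiplying by the uniformly bounded ratio $\sqrt{(1+\cB_n^2)/(1+\cB_0^2)}$, one obtains $\|D_x\cF_n v\|\ge\hat c\Lambda^n\|v\|$ with $\hat c$ depending only on $\IK$. The point is that $\cos\varphi_0$ cancels entirely: the potential grazing at the starting point is absorbed into $L_0$, not left behind as a degenerate norm-equivalence constant. The analogous regrouping (running the product from the other end) handles the stable-cone estimate.
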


Notice that the cones here can be chosen independently of~$x$ and of the 
scatterer configurations involved. The proof follows verbatim that of the fixed scatterers case; see~\cite{ChernovMarkarian_2006}. 

\smallskip

Following convention, we introduce for purposes of controlling distortion
(see Lemma~\ref{lem:distortion}) the homogeneity strips 
\begin{eqnarray*}
\IH_k & = & \{ (r,\varphi)\in\cM\,:\, \pi/2-k^{-2}<\varphi \leq \pi/2-(k+1)^{-2}\}\\
\IH_{-k} &= &\{ (r,\varphi)\in\cM\,:\, -\pi/2+(k+1)^{-2}\leq \varphi < -\pi/2+k^{-2}\}
\end{eqnarray*}
for all integers $k\geq k_0$, where~$k_0$ is a sufficiently large uniform constant. 
It follows, for example, that for each~$k$,~$D_xF$ is uniformly bounded for 
$x\in F^{-1}(\IH_{-k}\cup\IH_{k})$, as
\beq\label{eq:cos_bound}
C_\mathrm{\cos}^{-1}k^{-2}\leq \cos\varphi' \leq C_\mathrm{\cos}k^{-2}
\eeq
for a constant $C_\mathrm{\cos}>0$. 
We will also use the notation
\beqn
\IH_0 = \{ (r,\varphi)\in\cM\,:\, -\pi/2+k_0^2\leq \varphi \leq \pi/2-k_0^{-2}\}\ .
\eeqn


\subsection{Discontinuity sets and homogeneous components}\label{sec:discont_homog}
For each $(\cK, \cK')$, the singularity set $(F_{\cK',\cK})^{-1}\partial\cM$ 
has similar geometry as in the case $\cK'=\cK$. In particular, it is the union 
of finitely many \mbox{$C^2$-smooth} curves which are negatively sloped, and 
there are uniform bounds depending only on $\IK$ for the number of smooth segments 
(as follows from \eqref{eq:flight_times}) and their derivatives. One of the geometric facts,
true for fixed scatterers as for the time-dependent case, that will be useful later is the
following:
Through every point in $F^{-1}\partial\cM$, there is a continuous path in 
$F^{-1}\partial\cM$ that goes monotonically in $\varphi$ from one component 
of $\partial\cM$ to the other.

In our proofs it will be necessary to know that the structure of the
singularity set varies in a controlled way with changing configurations. 
Let us denote
\beqn
\cS_{\cK',\cK} = \partial\cM\cup (F_{\cK',\cK})^{-1}\partial\cM.
\eeqn
If $\cK$ and $\cK'$ are small perturbations of $\widetilde\cK$, then $\cS_{\cK',\cK}$ is contained in a small neighborhood of $\cS_{\widetilde\cK,\widetilde\cK}$ (albeit the topology of $\cS_{\cK',\cK}$ may be slightly different from that of $\cS_{\widetilde\cK,\widetilde\cK}$). A proof of the following result, which suffices for our purposes, is given in the Appendix.

\begin{lem}\label{lem:map_cont}
Given a configuration $\widetilde\cK\in\IK$ and a compact subset $E\subset \cM\setminus \cS_{\widetilde\cK,\widetilde\cK}$, there exists $\delta>0$ such that the map $(x,\cK,\cK')\mapsto F_{\cK',\cK}(x)$ is uniformly continuous on $E \times  \cN_\delta(\widetilde\cK)\times \cN_\delta(\widetilde\cK)$.
\end{lem}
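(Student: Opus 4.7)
The plan is to combine three ingredients: the smooth dependence of the embeddings $\cK\mapsto\Phi_\cK$ on the configuration, uniform transversality estimates on the compact set $E$, and the elementary fact that the intersection of a line with a smooth strictly convex curve depends continuously on its data whenever the crossing is transverse. The uniform continuity statement will then follow by restricting to a slightly smaller closed neighborhood of $\widetilde\cK$, on which the product domain is compact.

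First, I extract uniform constants from $E$. Since $E$ is compact and disjoint from $\cS_{\widetilde\cK,\widetilde\cK}=\partial\cM\cup F_{\widetilde\cK,\widetilde\cK}^{-1}\partial\cM$, there exists $c_0>0$ such that for every $x=(r,\varphi)\in E$, writing $(\widetilde q,\widetilde v)=\Phi_{\widetilde\cK}(x)$ and $(\widetilde q',\widetilde v')=\bF_{\widetilde\cK,\widetilde\cK}(\widetilde q,\widetilde v)$: (a) $\cos\varphi\ge c_0$; (b) the open segment $\{\widetilde q+t\widetilde v:0<t<\tau_{\widetilde\cK,\widetilde\cK}(x)\}$ stays at distance $\ge c_0$ in $\IT^2$ from every scatterer boundary of $\widetilde\cK$ other than at the endpoints; (c) $\cos\widetilde\varphi'\ge c_0$; and (d) $\bar\tau^{\min}/2\le\tau_{\widetilde\cK,\widetilde\cK}(x)\le\ft$. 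These bounds are a standard consequence of compactness applied to the open conditions defining $\cM\setminus\cS_{\widetilde\cK,\widetilde\cK}$.

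Next, I exploit the fact that $\cK\mapsto\Phi_\cK$ is smooth, because $\cK$ merely records the isometric placement of the reference bodies $B_i$ into $\IT^2$. Given $\eta>0$, pick $\delta=\delta(\eta)$ so that for all $\cK\in\cN_\delta(\widetilde\cK)$ and $x\in E$: $\Phi_\cK(x)$ lies within $\eta$ of $\Phi_{\widetilde\cK}(x)$, and each $\bB_j$ of $\cK$ lies within Hausdorff distance $\eta$ of the corresponding scatterer of $\widetilde\cK$. For $\eta\ll c_0$ the perturbed ray from $\Phi_\cK(x)$ exits the buffer zone of its source scatterer, and for every $\cK'\in\cN_\delta(\widetilde\cK)$ the ray stays at distance $\ge c_0/2$ from every scatterer of $\cK'$ outside a small neighborhood of $\widetilde q'$. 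Inside that neighborhood it meets the scatterer $\bB'_{i'}$ of $\cK'$ at a unique point $q'$ near $\widetilde q'$, transversally, since $\cos\varphi'\ge c_0/2$ persists.

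This local collision is the root, in $t$, of the scalar equation $q+tv\in\partial\bB'_{i'}$; the partial derivative in $t$ is $-\cos\varphi'\le-c_0/2$, bounded away from zero, so the implicit function theorem yields that $(q',\varphi')$ depends continuously on $(q,v)$ and on the arc $\partial\bB'_{i'}$, with a modulus of continuity that is uniform for $x\in E$ and $(\cK,\cK')\in\cN_\delta(\widetilde\cK)^2$ thanks to the uniform lower bounds $\kappa\ge\bar\kappa^{\min}$ (which is automatic here) and $\cos\varphi'\ge c_0/2$. Composing with $\Phi_{\cK'}^{-1}$, itself smooth in $\cK'$ near $\bF_{\widetilde\cK,\widetilde\cK}(\Phi_{\widetilde\cK}(E))$, delivers continuity of $(x,\cK,\cK')\mapsto F_{\cK',\cK}(x)$, and hence uniform continuity on any compact subdomain. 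The main obstacle I expect is ruling out ``phantom'' collisions, namely that the perturbed ray be intercepted by some scatterer of $\cK'$ before reaching $\widetilde q'$; this is exactly what (b) excludes once $\eta<c_0/2$, since no scatterer of $\cK'$ can then fit inside the $c_0/2$-tube around the unperturbed segment.
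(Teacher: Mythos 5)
Your proof is correct and, at bottom, follows the same route as the paper's: establish continuity of $(x,\cK,\cK')\mapsto F_{\cK',\cK}(x)$ away from tangencies, then pass to uniform continuity by compactness of $E$ times a slightly shrunk closed neighborhood of $\widetilde\cK$. The difference lies in how continuity is argued. The paper passes to a coordinate frame attached to the target scatterer and factors $F_{\cK',\cK}=F'\circ\bar\pi\circ(\mathrm{id}_\cM\times G)$, treating the local collision map $F'$ as a black box that is ``continuous except at tangential collisions''; you instead work in the ambient plane and invoke the implicit function theorem directly, with the uniform lower bound $\cos\varphi'\ge c_0/2$ serving as the nondegenerate Jacobian, which makes the modulus-of-continuity claim quantitative. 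You also make explicit a point the paper absorbs silently into its choice of the open set $U$ (where every collision is ``head-on from $\bB$ to $\bB'$''): that the perturbed ray cannot be intercepted by a different scatterer of $\cK'$ before reaching the neighborhood of $\widetilde q'$, which you rule out via the $c_0$-tube around the unperturbed segment and the Hausdorff proximity of the scatterers. The two approaches buy the same conclusion; the paper's factorization is notationally cleaner, while your version is more self-contained and flags the phantom-collision issue, which is the most substantive thing a reader needs to check here.

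One small remark: when you assert uniform constants from compactness of $E$, it is worth recalling (as the paper does implicitly) that the singularity set $\cS_{\widetilde\cK,\widetilde\cK}$ includes not only points whose \emph{terminal} collision is tangential but also those whose trajectory grazes a scatterer en route, so that avoiding $\cS_{\widetilde\cK,\widetilde\cK}$ really does give your item (b), the uniform clearance $c_0$ along the whole open segment. Your argument uses this; it is correct, but worth stating since it is exactly what makes the tube argument work.
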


While $F^{-1}\partial \cM$ is the genuine discontinuity set for $F$, for purposes of
distortion control one often treats the preimages of homogeneity lines as though
they were discontinuity curves also. We introduce the following language: 
A set $E \subset \cM$ is said to be {\it homogeneous} if it is 
completely contained in a connected component of one of the~$\IH_k$, $|k| \ge k_0$ or $k=0$. Let 
$E \subset \cM$ be a homogeneous set. 
Then $F(E)$ may have more than one connected component.
We further subdivide each connected component into maximal homogeneous 
subsets and call these the \emph{homogeneous components of~$F(E)$}. 
For $n\geq 2$, the \emph{homogeneous components of $\cF_n(E)$} are defined inductively: Suppose $E_{n-1,i}$, $i\in I_{n-1}$, are the homogeneous components of $\cF_{n-1}(E)$, for some index set $I_{n-1}$ which is at most countable. For each $i\in I_{n-1}$, the set $E_{n-1,i}$ is a homogeneous set, and we can thus define the homogeneous components of the single-step image $F_n (E_{n-1,i})$ as above. The subsets so obtained, for all $i\in I_{n-1}$, are the homogeneous components of $\cF_n(E)$. 
Let $E_{n,i}^- = E \cap \cF_n^{-1}(E_{n,i})$. We call~${\{E_{n,i}^-\}}_i$ 
 \emph{the canonical $n$-step subdivision of~$E$}, leaving the dependence on the sequence implicit when there is no ambiguity.

For $x,y \in \cM$, we define the \emph{separation time} $s(x,y)$ to be the smallest $n\geq 0$ for which $\cF_nx$ and~$\cF_ny$ 
belong in different strips~$\IH_k$ or in different connected components of~$\cM$.
Observe that this definition is~$(\cK_n)$-dependent.


\subsection{Unstable curves}\label{sec:unstable_curves}
A connected $C^2$-smooth curve $W\subset \cM$ is called an {\it unstable curve} 
if $T_x W\subset \cC^u_x$ for every $x\in W$. It follows from the invariant cones condition
 that the image of an unstable curve under $\cF_n$ is a union of unstable curves.
Our unstable curves will be parametrized by $r$: for a curve $W$, 
we write $\varphi=\varphi_W(r)$.

For an unstable curve $W$, define $\hat \kappa_W
= \sup_W|\rd^2\varphi_W/\rd r^2|$.

\begin{lem}\label{lem:curvature}
There exist uniform constants $C_\mathrm{c}>0$ and $\vartheta_\mathrm{c}\in(0,1)$ such that the following holds. Let $W$ and $\cF_n W$ be unstable curves. Then
\beqn
\hat \kappa_{\cF_n W} \leq \frac{C_\mathrm{c}}2(1+\vartheta_\mathrm{c}^n \hat \kappa_W).
\eeqn
\end{lem}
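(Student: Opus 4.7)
The plan is to establish a uniform one-step inequality of the form $\hat\kappa_{F_{\cK',\cK}W} \le \alpha + \beta\, \hat\kappa_W$ with constants $\alpha>0$ and $\beta\in(0,1)$ depending only on $\IK$, and then iterate $n$ times. This is the standard route used in the fixed-scatterer case, but the details must be checked to see that the source/target distinction does no harm.

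For the one-step bound I would decompose $F_{\cK',\cK}$ into free flight followed by elastic reflection and apply the classical evolution formulas for the curvature of dispersing wavefronts (cf.~\cite{ChernovMarkarian_2006}): propagation transforms the wavefront curvature by $\mathcal B \mapsto \mathcal B/(1+\tau\mathcal B)$, while reflection transforms it by $\mathcal B \mapsto \mathcal B + 2\kappa(x')/\cos\varphi'$. Translating these into the $(r,\varphi)$ coordinates and differentiating the resulting expression for the slope $\rd\varphi_{FW}/\rd r$ once more produces a formula for $\rd^2\varphi_{FW}/\rd r^2$ in which the ``old'' second derivative $\rd^2\varphi_W/\rd r^2$ appears multiplied by a chain-rule Jacobian of order $(1+\tau(x)\mathcal B^+(x))^{-2}$. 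Since any unstable tangent vector satisfies $\mathcal B^+ \ge 2\kappa^{\min}$ (the post-reflection wavefront is strictly dispersing, consistent with Lemma~\ref{lem:cones}) and $\tau(x) \ge \bar\tau^{\min}/2$ by~\eqref{eq:flight_times}, this Jacobian is bounded above by some $\beta<1$ depending only on $\bar\tau^{\min}$ and $\kappa^{\min}$. The additive constant $\alpha$ absorbs contributions from $\rd\kappa/\rd r$ and $\rd(\cos\varphi')^{-1}/\rd r$, which are uniformly bounded on each homogeneous component thanks to~\eqref{eq:cos_bound} and the $C^3$ smoothness of $\partial\bB_i$. Crucially, every quantity appearing in this computation --- the curvatures $\kappa,\kappa'$ at the start and end of the flight, the flight time $\tau$, the outgoing angle $\varphi'$ --- ranges over a uniform interval determined by $\IK$, so the estimate does not distinguish $\cK$ from $\cK'$.

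Iterating the one-step inequality along the sequence $W, F_1W, \cF_2W, \dots, \cF_nW$ gives
\[
\hat\kappa_{\cF_n W} \le \alpha\sum_{k=0}^{n-1}\beta^k + \beta^n \hat\kappa_W \le \frac{\alpha}{1-\beta} + \beta^n \hat\kappa_W,
\]
which is of the desired form upon setting $\vartheta_\mathrm c = \beta$ and $C_\mathrm c = 2\max\{\alpha/(1-\beta),\, 1\}$.

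The main obstacle is not the algebraic manipulation --- which is identical to the fixed-scatterer case --- but verifying that $\alpha$ and $\beta$ are genuinely uniform over admissible pairs in $\IK$. This reduces to the uniform bounds already recorded in Section~\ref{sec:preliminariesI}: the scatterer curvatures lie in $[\kappa^{\min},\kappa^{\max}]$, the flight times lie in $[\bar\tau^{\min}/2,\ft]$, and $\cos\varphi'$ is controlled on each homogeneity strip by~\eqref{eq:cos_bound}. Once these uniform bounds are inserted into the one-step curvature formula, the argument of~\cite{ChernovMarkarian_2006} goes through verbatim and yields the claim.
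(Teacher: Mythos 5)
Your overall plan --- establish a one-step recursion for $\rd^2\varphi/\rd r^2$ and iterate --- is the right one and is also what the paper's reference~\cite{ChernovDolgopyatBBM} implements. But the step on which your iteration rests, namely that the one-step coefficient on the old second derivative is a uniform $\beta<1$, does not hold. Differentiating the slope formula $\rd\varphi_{FW}/\rd\bar r=(c+d\sigma)/(a+b\sigma)$ once more (here $a,b,c,d$ are the entries of $-\cos\bar\varphi\cdot D_xF$, $\sigma=\rd\varphi_W/\rd r$, and $\bar\varphi$ denotes the angle coordinate at the image point) shows that the coefficient on $\rd^2\varphi_W/\rd r^2$ in $\rd^2\varphi_{FW}/\rd\bar r^2$ equals
\[
\frac{\det D_xF}{(\rd\bar r/\rd r)^{3}} \ = \ \frac{\cos\varphi\,\cos^2\bar\varphi}{(\tau\kappa + \cos\varphi + \tau\sigma)^3}\ ,
\]
and this is not $(1+\tau\mathcal B^+)^{-2}$: the passage from wavefront curvature to $\rd^2\varphi/\rd r^2$ brings in extra $\cos$-factors that change the exponent. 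Taking $\tau\approx\bar\tau^{\min}/2$, $\kappa\approx\sigma\approx\kappa^{\min}$, $\cos\bar\varphi\approx 1$, and maximizing over $\cos\varphi$, this coefficient is about $4/(27(\bar\tau^{\min}\kappa^{\min})^2)$, which exceeds $1$ whenever $\bar\tau^{\min}\kappa^{\min}$ is small --- and nothing in the definition of $\IK$ prevents that. This is the same phenomenon that forces the constant $\hat c\leq 1$ in Lemma~\ref{lem:cones}: a single billiard step need not expand unstable vectors, hence need not contract second derivatives. Your geometric series $\alpha\sum\beta^k\leq\alpha/(1-\beta)$ therefore diverges. (A secondary imprecision: $\rd(\cos\bar\varphi)^{-1}/\rd r$ is not uniformly bounded, and~\eqref{eq:cos_bound} gives only $k$-dependent bounds, so it does not control the additive term; that term is in fact uniformly bounded, but because the $\rd\bar r/\rd r$-factors in the full expression cancel, not because of~\eqref{eq:cos_bound}.)

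The repair is to control the $n$-step product rather than each step in isolation. Writing $\beta_m$ for the coefficient along the orbit and using $\rd\bar r/\rd r=(a_m+b_m\sigma_m)/\cos\varphi_{m+1}$, the product telescopes:
\[
\prod_{m=0}^{n-1}\beta_m
\ = \ \frac{\cos\varphi_0}{(a_0+b_0\sigma_0)^3}\,\Bigl(\prod_{m=1}^{n-1}\frac{\cos^3\varphi_m}{(a_m+b_m\sigma_m)^3}\Bigr)\,\cos^2\varphi_n \ .
\]
Because $a_m+b_m\sigma_m\geq\cos\varphi_m+\bar\tau^{\min}\kappa^{\min}$, each middle factor is $\leq(1+\bar\tau^{\min}\kappa^{\min})^{-3}=:\vartheta_\mathrm{c}<1$, while the two end factors are uniformly bounded. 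Hence $\prod\beta_m\leq C\vartheta_\mathrm{c}^n$ uniformly even though individual $\beta_m$ may exceed $1$, and the accumulated additive contribution $\sum_m\alpha_m\prod_{j>m}\beta_j$ is bounded by $C\alpha/(1-\vartheta_\mathrm{c})$. Your final bookkeeping goes through once this $n$-step estimate replaces the one-step one.
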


We call an unstable curve $W$ \emph{regular} if it is homogeneous and satisfies 
the curvature bound $\hat \kappa_W \le C_c$. Thus for any unstable curve $W$,
all homogeneous components of $\cF_n(W)$ are regular for large enough $n$.

Given a smooth curve $W\subset\cM$, define 
\beqn
\cJ_W\cF_n(x) = {\|D_x\cF_n v\|}/{\|v\|}
\eeqn
for any nonzero vector $v\in T_x W$.
In other words, $\cJ_W\cF_n$ is the Jacobian of the restriction~$\cF_n|_W$.

\begin{lem}[Distortion bound]\label{lem:distortion}
There exist uniform constants $C_\mathrm{d}'>0$ and $C_\mathrm{d}>0$ such that the following holds. Given $(\cK_n)_{n=0}^N$, if $\cF_n W$ is a homogeneous unstable curve for $0\leq n\leq N$, then
\beq\label{eq:distortion}
C_\mathrm{d}^{-1}\leq e^{-C_\mathrm{d}'|\cF_n W|^{1/3}} \leq  \frac{\cJ_{W}\cF_n (x)}{\cJ_{W}\cF_n (y)} \leq e^{C_\mathrm{d}'|\cF_n W|^{1/3}} \leq C_\mathrm{d}
\eeq
for every pair $x,y\in W$ and $0\leq n\leq N$.
\end{lem}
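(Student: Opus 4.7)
The plan is to follow the standard telescoping/integration argument for billiard distortion, checking that every constant that appears can be taken uniform across admissible pairs in $\IK$. Write $V_k = \cF_k W$, $x_k = \cF_k x$, $y_k = \cF_k y$, and note that by hypothesis each $V_k$ is a homogeneous unstable curve. Since $\cJ_W\cF_n = \prod_{k=0}^{n-1} \cJ_{V_k} F_{k+1}$, we obtain the telescoping identity
\beqn
\log \frac{\cJ_W\cF_n(x)}{\cJ_W\cF_n(y)} = \sum_{k=0}^{n-1}\Bigl[\log\cJ_{V_k}F_{k+1}(x_k) - \log\cJ_{V_k}F_{k+1}(y_k)\Bigr],
\eeqn
so everything reduces to a single-step estimate plus summation.

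The single-step estimate is the main technical step. Using the explicit matrix for $D_xF_{\cK',\cK}$ from Section~\ref{sec:preliminariesI}, together with the fact that tangent vectors to $V_k$ lie in $\cC^u$ (so their slopes are uniformly bounded between $\kappa^{\min}$ and $\kappa^{\max} + 2/\bar\tau^{\min}$), one writes $\cJ_{V_k}F_{k+1}$ explicitly in terms of $\tau$, $\kappa$, $\cos\varphi'$, and the slope of $V_k$. Differentiating $\log\cJ_{V_k}F_{k+1}$ along $V_k$, the dominant singular factor is $1/\cos\varphi'$ coming from the $(\cos\varphi')^{-1}$ out front in $DF$; the remaining pieces are controlled by the curvature bound $\hat\kappa_{V_k}\le C_\mathrm{c}$ from Lemma~\ref{lem:curvature}, the $C^3$-regularity of the scatterer boundaries (uniform on $\IK$), and the two-sided flight-time bound $\bar\tau^{\min}/2 \le \tau \le \ft$. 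Parametrizing $F_{k+1}(V_k)$ by arclength $\xi$ and estimating $|(\rd/\rd\xi)\log\cJ_{V_k}F_{k+1}| \le C/(\cos\varphi')^{2/3}$ in the standard way, one integrates along the subarc $\gamma\subset F_{k+1}(V_k)$ connecting $x_{k+1}$ to $y_{k+1}$:
\beqn
\bigl|\log\cJ_{V_k}F_{k+1}(x_k) - \log\cJ_{V_k}F_{k+1}(y_k)\bigr| \le \int_\gamma \frac{C\,\rd\xi}{(\cos\varphi')^{2/3}}.
\eeqn
Because $F_{k+1}(V_k)$ is homogeneous, $\gamma$ lies in a single strip $\IH_j$ (or $\IH_0$), on which $\cos\varphi' \asymp j^{-2}$ and $|\gamma|\le C|V_{k+1}|$, giving the Hölder bound $C''\,|V_{k+1}|^{1/3}$ (the exponent $1/3$ arising from the balance between the $(\cos\varphi')^{-2/3}$ singularity and the fact that homogeneity strips have width $\asymp j^{-3}$).

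To complete the proof, I would use uniform hyperbolicity (Lemma~\ref{lem:cones}) in backward time: for $k\le n$,
\beqn
|V_k| \le \hat c^{-1}\Lambda^{-(n-k)}|\cF_n W|,
\eeqn
so the single-step estimates sum geometrically,
\beqn
\left|\log\frac{\cJ_W\cF_n(x)}{\cJ_W\cF_n(y)}\right| \le C''\sum_{k=0}^{n-1}|V_{k+1}|^{1/3} \le C''\hat c^{-1/3}|\cF_n W|^{1/3}\sum_{k=0}^{n-1}\Lambda^{-(n-k)/3} \le C_\mathrm{d}'\,|\cF_n W|^{1/3},
\eeqn
which proves the middle inequalities of~\eqref{eq:distortion}. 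The outer constant $C_\mathrm{d}$ then follows from the uniform bound $|\cF_n W| \le \operatorname{diam}(\cM)$.

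The main obstacle, and essentially the only place where time-dependence really needs to be checked, is verifying that the constant $C$ in the single-step bound $|(\rd/\rd\xi)\log\cJ_{V_k}F_{k+1}| \le C/(\cos\varphi')^{2/3}$ is independent of the pair $(\cK_k,\cK_{k+1})$. This is where one uses that on $\IK$ the flight times are uniformly bounded above and below, the scatterer curvatures and their $C^3$-norms are uniform, and that the slope of $V_k$ sits in a fixed unstable cone — all of which are built into the definition of $\IK$. Once uniformity is secured, the rest is the classical Chernov--Markarian computation and nothing new arises from the configurations varying from step to step.
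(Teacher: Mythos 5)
Your proposal follows the same route the paper takes — the paper gives no proof of this lemma, but explicitly defers to Chernov--Markarian with the comment that the time-dependent case is analogous, and your telescoping over single-step Jacobians, one-step bound $\lesssim |V_{k+1}|^{1/3}$ on a homogeneity strip, and geometric summation via backward contraction $|V_k|\le \hat c^{-1}\Lambda^{-(n-k)}|\cF_nW|$ is exactly the CM argument with the uniformity of constants over $\IK$ checked, which is precisely what the paper intends. One small slip: the standard pointwise bound on the image curve is $|(\rd/\rd\xi)\log\cJ_{V_k}F_{k+1}|\le C/\cos\varphi'$ (first power, not $(\cos\varphi')^{-2/3}$); one then uses $\cos\varphi'\asymp j^{-2}$ and $|\gamma|\le|V_{k+1}|\lesssim j^{-3}$ to get $j^{2}\lesssim |V_{k+1}|^{-2/3}$ and hence $\int_\gamma C/\cos\varphi'\,\rd\xi\lesssim|V_{k+1}|^{1/3}$ — the $2/3$ exponent belongs to the relation between $\cos\varphi'$ and $|V_{k+1}|$, not to the pointwise singularity; your stated bound is stronger than what the computation yields but the conclusion is unaffected.
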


Finally, we state a result which asserts that very short homogeneous curves 
cannot acquire lengths of order one arbitrarily fast, in spite of the fact that the local
expansion factor is unbounded.

\begin{lem}\label{lem:growth_bound}
There exists a uniform constant $C_\mathrm{e}\geq 1$ such that
\beqn
|\cF_n W| \leq C_\mathrm{e}|W|^{1/2^n},
\eeqn
if $W$ is an unstable curve and $\cF_{m}W$ is homogeneous for $0\leq m<n$.
\end{lem}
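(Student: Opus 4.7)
The plan is to argue by induction on $n$, reducing everything to a single-step length bound of the form
\beqn
|F_{\cK',\cK}(\tilde W)| \leq C_* \,|\tilde W|^{1/2}
\eeqn
for every connected homogeneous unstable curve $\tilde W$ and every admissible pair $(\cK, \cK')$ in $\IK$, with $C_*$ uniform. Granting this, since each $\cF_m W$ with $0 \leq m < n$ is a connected homogeneous unstable curve by hypothesis, iterating the one-step bound yields
\beqn
|\cF_n W| \leq C_* |\cF_{n-1}W|^{1/2} \leq \ldots \leq C_*^{\,2(1 - 2^{-n})} |W|^{1/2^n} \leq C_*^{\,2} |W|^{1/2^n},
\eeqn
so one may take $C_\mathrm{e} = C_*^{\,2}$.

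To establish the one-step bound, write $F = F_{\cK',\cK}$, and for a threshold $\delta>0$ to be optimized, split the image according to proximity to the singular boundary $\{\cos\varphi' = 0\}$:
\beqn
F(\tilde W) = \bigl(F(\tilde W) \cap \{\cos\varphi' \geq \delta\}\bigr) \,\sqcup\, \bigl(F(\tilde W) \cap \{\cos\varphi' < \delta\}\bigr).
\eeqn
On the preimage of the first piece, the derivative formula of Section~\ref{sec:preliminariesI} gives $\cJ_{\tilde W}F \leq C/\cos\varphi' \leq C/\delta$, so the first piece has total length at most $(C/\delta)|\tilde W|$. The second piece lies inside $\bigcup_{|k'| \geq c\delta^{-1/2}} \IH_{k'}$. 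Each connected component of $F(\tilde W)$ is a single unstable curve and hence, by the cone bound of Lemma~\ref{lem:cones}, has $\varphi'$ monotone; it therefore meets each $\IH_{k'}$ in at most one sub-arc of length $\leq C k'^{-3}$ (the strip-width $\sim k'^{-3}$ combined with the slope bound). Moreover the number of connected components of $F(\tilde W)$ is bounded by a uniform $N_0$, since by Section~\ref{sec:discont_homog} the singularity set $F^{-1}\partial\cM$ has uniformly bounded complexity. Summing $\sum_{|k'| \geq c\delta^{-1/2}} k'^{-3} \leq C\delta$, the second piece has total length $\leq C N_0 \delta$. Choosing $\delta \sim \sqrt{|\tilde W|}$ balances the two contributions and delivers $|F(\tilde W)| \leq C_*\sqrt{|\tilde W|}$.

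The main obstacle is the uniformity of all constants across admissible pairs in $\IK$. The Jacobian bound is uniform once $\tau \in [\bar\tau^{\min}/2, \ft]$ (from~\eqref{eq:flight_times}) and the curvature bounds are used; the component count $N_0$ is uniform because the singularity curves of $F_{\cK',\cK}$ are qualitatively the same as in the classical case and the $(\ft,\varphi)$-horizon condition controls their number; and the strip-width sum depends only on $k_0$. All three are routine adaptations of the corresponding fixed-scatterer estimates in~\cite{ChernovMarkarian_2006}.
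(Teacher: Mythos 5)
Your proof is correct and takes essentially the same route as the paper, which states only that the lemma ``follows readily by iterating the corresponding one-step bound'' with a reference to \cite{ChernovMarkarian_2006}. The one-step argument you supply (split $F(\tilde W)$ at the threshold $\cos\varphi'=\delta$, bound the far part by the Jacobian estimate $\cJ_{\tilde W}F\lesssim 1/\cos\varphi'$, bound the near part by summing strip widths $k'^{-3}$ over the uniformly many monotone components, then optimize $\delta\sim|\tilde W|^{1/2}$) is precisely the standard Chernov--Markarian computation, with the correct observation that uniformity over $\IK$ follows from the flight-time bounds~\eqref{eq:flight_times}, the curvature bounds, and the uniform complexity of the singularity set.
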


The proofs of these results also follow closely those for the fixed scatterers
case. For Lemma~\ref{lem:curvature}, see~\cite{ChernovDolgopyatBBM}. 
For Lemmas~\ref{lem:distortion} and \ref{lem:growth_bound}, see~\cite{ChernovMarkarian_2006}. (Lemma~~\ref{lem:growth_bound} follows readily by iterating the corresponding
one-step bound.)


\subsection{Local stable manifolds}\label{sec:Loc}
Given $(\cK_n)_{n\geq 0}$, a connected smooth curve $W$ is called a {\it homogeneous local stable manifold}, or simply {\it local stable manifold}, 
if the following hold for every $n \ge 0$:

\smallskip

(i) $\cF_n W$ is connected and homogeneous, and

(ii) $T_x(\cF_n W) \subset \cC^s_x$ for every $x \in \cF_n W$.

\smallskip

\noindent It follows from Lemma~\ref{lem:cones} that local stable manifolds are exponentially contracted under $\cF_n$. We stress that unlike unstable curves,
the definition of local stable manifolds
depends strongly on the infinite sequence of billiard maps defined by
$(\cK_n)_{n\geq 0}$.

For $x \in \cM$, let $W^s(x)$ denote the maximal local stable manifold through
$x$ if one exists.
An important result is the \emph{absolute continuity of local stable manifolds}. 
Let  two unstable curves $W^1$ and $W^2$ be given. 
Denote $W^i_\star = \{x\in W^i\,:\, W^s(x)\cap W^{3-i}\neq 0\}$ for $i=1,2$. The map 
$\bfh:W^1_\star\to W^2_\star$ such that $\{\bfh(x)\} = W^s(x)\cap W^2$ for every $x\in W^1_\star$ is called the holonomy map. The Jacobian~$\cJ\bfh$ of the holonomy is the Radon--Nikodym derivative of the pullback $\bfh^{-1}(\fm_{W^2}|_{W^2_*})$ with respect to~$\fm_{W^1}$. The following result gives a uniform bound on the Jacobian almost everywhere on $W^1_\star$. 

\begin{lem}\label{lem:Jac_holonomy}
Let $W^1$ and $W^2$ be regular unstable curves. Suppose $\bfh:W^1_\star\to W^2_\star$ is defined on a positive $\fm_{W^1}$-measure set $W^1_\star\subset W^1$. 
Then for $\fm_{W^1}$-almost every point $x\in W^1_\star$,
\beq\label{eq:Jac_expression}
\cJ\bfh(x) = \lim_{n\to\infty}\frac{\cJ_{W^1}\cF_n(x)}{\cJ_{W^2}\cF_n(\bfh(x))},
\eeq
where the limit exists and is positive with uniform bounds. In fact, there exist uniform constants $A_\bfh>0$ and $C_\bfh>0$ such that the following holds: If $\alpha(x)$ denotes the difference between the slope of the tangent vector of $W^1$ at $x$ and that of $W^2$ at $\bfh(x)$, and if~$\delta(x)$ is the distance between~$x$ and~$\bfh(x)$, then
\beq\label{eq:Jac_bound}
A_\bfh^{-\alpha-\delta^{1/3}}\leq \cJ\bfh \leq A_\bfh^{\alpha+\delta^{1/3}}
\eeq
almost everywhere on $W^1_\star$. Moreover, with $\theta=\Lambda^{-1/6} \in(0,1)$,
\beq\label{eq:Jac_regular} 
|\cJ\bfh(x)-\cJ\bfh(y)|\leq C_\bfh\theta^{s(x,y)}
\eeq
holds for all pairs $(x,y)$ in $W^1_\star$, where $s(x,y)$ is the separation time 
defined in Section~\ref{sec:discont_homog}.
\end{lem}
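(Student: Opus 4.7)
The plan is to adapt the classical absolute-continuity argument for two-dimensional dispersing billiards to the non-stationary setting, exploiting the fact that the preceding lemmas are all uniform across admissible pairs in $\IK$. The starting point is the well-known telescoping identity: setting $W^i_k = \cF_k(W^i)$, $x_k=\cF_k(x)$, $y_k=\cF_k(\bfh(x))$, and letting $\bfh_n:W^1_n\to W^2_n$ denote the intermediate ``holonomy'' that projects $x_n$ onto $W^2_n$ along $W^s(x_n)$, one writes
$$\cJ\bfh(x) \;=\; \cJ\bfh_n(x_n)\cdot\frac{\cJ_{W^1}\cF_n(x)}{\cJ_{W^2}\cF_n(\bfh(x))}.$$
Under forward iteration the tangent slopes of $W^1_n$ and $W^2_n$ are attracted to the unstable cone axis at a uniform exponential rate by Lemmas~\ref{lem:cones}--\ref{lem:curvature}, so the two curves become tangent and $\cJ\bfh_n(x_n)\to 1$. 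This yields \eqref{eq:Jac_expression}; existence almost everywhere is the statement that $\bfh_{*}\fm_{W^1}$ is absolutely continuous with respect to $\fm_{W^2}$, which follows from the preimage structure of $\cF_n$ combined with the Lebesgue differentiation theorem.

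For the pointwise bound \eqref{eq:Jac_bound} I would take logarithms and estimate each one-step summand using the explicit derivative matrix from Section~\ref{sec:preliminariesI}. Denoting $\delta_k$ and $\alpha_k$ the distance and slope-gap at step $k$, the single-step log-ratio decomposes as a controlled combination of differences of $\cos\varphi$, curvature $\kappa$, flight time $\tau$, and tangent-slope factors evaluated at $x_{k-1}$ and $y_{k-1}$. The stable-cone contraction of Lemma~\ref{lem:cones} gives $\delta_k \le C\theta^k\delta_0$, a parallel argument for the slope gap yields $\alpha_k \le C\theta^k(\alpha_0+\delta_0^{1/3})$, and the $|\cdot|^{1/3}$-H\"older bound in Lemma~\ref{lem:distortion} (which is precisely the source of the cube-root exponent in \eqref{eq:Jac_bound}) controls each summand by a constant multiple of $\alpha_{k-1}+\delta_{k-1}^{1/3}$. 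Summing the resulting geometric series produces the claimed exponent $\alpha(x)+\delta(x)^{1/3}$.

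For \eqref{eq:Jac_regular} I would compare the two telescoping series for $\log\cJ\bfh(x)$ and $\log\cJ\bfh(y)$. Up to step $s(x,y)$ the relevant points lie in common homogeneous components, and the summands in the two series differ by an exponentially decaying error governed by Lemma~\ref{lem:distortion}; from step $s(x,y)$ onward I apply the tail version of \eqref{eq:Jac_bound} with the much smaller $\delta_{s(x,y)}$ and $\alpha_{s(x,y)}$ in place of $\delta$ and $\alpha$. Combining the two estimates and keeping careful track of the power to which the expansion rate $\Lambda$ enters through both the cube-root distortion exponent and the unstable contraction of pre-images yields the final rate $\theta=\Lambda^{-1/6}$. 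The main obstacle throughout is the bookkeeping for nearly tangential collisions: the $\cos\varphi$ factors in $D_xF$ can be arbitrarily small, and only the homogeneity-strip decomposition underpinning Lemma~\ref{lem:distortion} renders the one-step estimates summable in $k$. Because every geometric ingredient we rely on (Lemmas~\ref{lem:cones}, \ref{lem:curvature}, \ref{lem:distortion}, \ref{lem:growth_bound}) is uniform across admissible pairs, the non-stationary aspect introduces no genuinely new difficulty here.
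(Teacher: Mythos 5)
Your proposal follows the classical absolute-continuity argument (Anosov--Sinai, Sinai, adapted to discontinuous systems by Young and by Chernov--Markarian), which is exactly what the paper itself does: the paper does not present a detailed proof, but cites these references and notes that the fixed-scatterer argument carries over because all the ingredients from Sections~\ref{sec:preliminariesI}--\ref{sec:preliminariesII} are uniform over admissible pairs. Your telescoping identity $\cJ\bfh(x)=\cJ\bfh_n(x_n)\cdot\cJ_{W^1}\cF_n(x)/\cJ_{W^2}\cF_n(\bfh(x))$, the decay of $\cJ\bfh_n(x_n)\to 1$ by stable contraction and cone alignment, and the splitting of the telescoping series at time $s(x,y)$ to get the separation-time modulus \eqref{eq:Jac_regular} are all correct and match the intended argument.

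One imprecision worth fixing: you cite Lemma~\ref{lem:distortion} as ``precisely the source of the cube-root exponent,'' but Lemma~\ref{lem:distortion} controls the ratio $\cJ_W\cF_n(x)/\cJ_W\cF_n(y)$ for $x,y$ on the \emph{same} unstable curve, whereas the one-step summands in the telescoping sum compare $\cJ_{W^1}F_k$ at $x_{k-1}$ and $\cJ_{W^2}F_k$ at $y_{k-1}$, which lie on \emph{different} unstable curves joined by a stable manifold. The required ``cross-curve'' distortion estimate is analogous and rests on the same mechanism (homogeneity strips make $\cos\varphi$ comparable at the two points, and the $1/3$ arises from $\cos\varphi\sim k^{-2}$ over a strip of width $\sim k^{-3}$), but it is not a corollary of Lemma~\ref{lem:distortion}; it must be established directly from the derivative formula in Section~\ref{sec:preliminariesI}. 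Likewise, the claimed bound $\alpha_k\le C\theta^k(\alpha_0+\delta_0^{1/3})$ should be justified rather than asserted by analogy---it mixes the alignment of tangents within a fixed unstable cone with the variation of the cone field over a distance $\delta_k$, and the exponent one gets there need not be $1/3$. Neither point changes the outcome, but spelling them out is where the real work lies.
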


The proof of Lemma~\ref{lem:Jac_holonomy} follows closely its counterpart for fixed configurations. The identity in \eqref{eq:Jac_expression} is standard for uniformly hyperbolic systems (see \cite{AnosovSinai_1967,Sinai_1970}), as is \eqref{eq:Jac_regular},
except for the use of separation time as a measure of distance in discontinuous
systems; see \cite{Young_1998,ChernovMarkarian_2006}.


\section{Preliminaries II: Evolution of measured unstable curves}\label{sec:preliminariesII}


\subsection{Growth of unstable curves}\label{sec:growth}

Given a sequence $(\cK_m)$, an unstable curve $W$, a point $x\in W$, 
and an integer $n\geq 0$, we denote by $r_{W,n}(x)$ the distance between
$\cF_n x$ and the boundary of the homogeneous component of $\cF_n W$
containing $\cF_n x$.

The following result, known as the Growth lemma, is key in the analysis of billiard dynamics.  It expresses the fact that the expansion of unstable curves dominates the cutting by $\partial\cM\cup\cup_{|k|\geq k_0}\partial\IH_k$, in a uniform fashion for all sequences. The reason behind this fact is that unstable curves expand at a uniform \emph{exponential} rate, whereas the cuts accumulate at tangential collisions. A short unstable curve can meet no more than $\ft/\bar\tau^{\min}$ tangencies in a single time step (see~\eqref{eq:flight_times}), so the number of encountered tangencies grows polynomially with time until a characteristic length has been reached. The proof 
follows verbatim that in the fixed configuration case; see \cite{ChernovMarkarian_2006}.

\begin{lem}[Growth lemma]\label{lem:growth}
There exist uniform constants $C_\mathrm{gr}>0$ and $\vartheta\in(0,1)$ such that, for all (finite or infinite) sequences $(\cK_n)_{n=0}^N$, unstable curves $W$ and $0\leq n\leq N$: 
\beqn
\fm_W\{r_{W,n}<\ve\}\leq C_\mathrm{gr}(\vartheta^n +|W|)\ve\ .
\eeqn
\end{lem}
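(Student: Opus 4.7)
The plan is to follow the classical two-stage scheme: prove a sharp one-step version of the estimate, then iterate. Since every quantity in the statement (expansion, distortion, curvature, flight time, cosine factors in homogeneity strips) is controlled by constants depending only on $\IK$, the argument for fixed scatterers transfers verbatim once the one-step estimate has been established uniformly across admissible pairs $(\cK,\cK')$.

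First I would prove the one-step bound
$$\fm_W\{r_{W,1}<\ve\}\leq (\vartheta_0+C_0|W|)\ve$$
with $\vartheta_0<1$ and $C_0,\vartheta_0$ depending only on $\IK$. The boundary of a homogeneous component of $F(W)$ lies on $\partial\cM$, on the preimage of some homogeneity line $\partial\IH_k$, or on the singularity curve $F^{-1}\partial\cM$. The contribution of $\partial\cM$ and of $F^{-1}\partial\cM$ is $O(|W|\ve)$, since the latter consists of finitely many $C^2$ curves with uniformly bounded geometry (the bound depends only on $\ft,\bar\tau^{\min},\varphi$ by \eqref{eq:flight_times} and Section \ref{sec:discont_homog}). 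For the homogeneity strips, I would use \eqref{eq:cos_bound} and the explicit formula for $D_xF$ to see that along unstable directions the expansion factor at a point mapped into $\IH_k$ is bounded below by $c\,k^2$. Consequently the $\fm_W$-measure of the preimage of an $\ve$-band within $\IH_k$ is $\lesssim \ve/k^2$, and summing $\sum_{|k|\geq k_0} k^{-2}\lesssim k_0^{-1}$ gives $\vartheta_0\lesssim 1/k_0$, which is less than $1$ for the uniform $k_0$ chosen in Section \ref{sec:preliminariesI}.

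For the induction, set $Z_n(\ve)=\fm_W\{r_{W,n}<\ve\}$ and decompose $W$ into its canonical $(n-1)$-step subdivision $\{W_{n-1,i}^-\}_i$ with images $W_{n-1,i}$ that are homogeneous components of $\cF_{n-1}(W)$. Applying the one-step bound to each $W_{n-1,i}$ with respect to the admissible pair $(\cK_{n-1},\cK_n)$, then pulling back through $\cF_{n-1}$ using Lemma \ref{lem:distortion}, gives
$$Z_n(\ve)\leq C_{\mathrm d}\vartheta_0\,\ve\sum_i\frac{|W_{n-1,i}^-|}{|W_{n-1,i}|}+C_{\mathrm d}C_0|W|\,\ve.$$
The boundary-length term $C_{\mathrm d}C_0|W|\ve$ is already of the desired form (it does not accumulate because $\sum_i|W_{n-1,i}^-|=|W|$). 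The delicate point is to handle $\sum_i |W_{n-1,i}^-|/|W_{n-1,i}|$: I would split the sum into short components (those of length below a fixed characteristic length $\delta_0$) and long ones. Long components contribute at most $O(|W|/\delta_0)$; short components are those that have not yet reached characteristic length, and by Lemma \ref{lem:growth_bound} combined with the one-step estimate these carry a contraction factor $\vartheta<1$ relative to $Z_{n-1}$. Choosing $\vartheta\in(\vartheta_0,1)$ and $\delta_0$ appropriately, this produces the recursion $Z_n(\ve)\leq \vartheta\, Z_{n-1}(\ve')+C|W|\ve$ for a suitable rescaled $\ve'=O(\ve)$, which unrolls to the stated $C_{\mathrm{gr}}(\vartheta^n+|W|)\ve$.

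The main obstacle will be Step 1: the explicit bookkeeping near tangential collisions, where one must combine the $k^{-2}$ geometry of the homogeneity strips with the matching $k^2$ lower bound on expansion uniformly over admissible pairs, using that a single step crosses at most $\ft/\bar\tau^{\min}$ tangencies. The iteration in Step 2 is essentially routine once the short/long dichotomy is set up, because the characteristic length $\delta_0$, the distortion constant $C_{\mathrm d}$, and the one-step constants $\vartheta_0,C_0$ all depend only on $\IK$; consequently nothing in the iteration sees whether the successive billiard maps have matching or differing source and target configurations.
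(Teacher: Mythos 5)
You have the right raw ingredients in hand---the per-step complexity bound $\ft/\bar\tau^{\min}$, the $k^{-2}$ widths of the homogeneity strips matched against the $k^{2}$ expansion there, the canonical subdivision, and the observation that every constant is uniform over $\IK$. But the way you assemble them into a ``one-step bound, then iterate'' argument has two gaps. Your one-step estimate $\fm_W\{r_{W,1}<\ve\}\leq(\vartheta_0+C_0|W|)\ve$ with $\vartheta_0<1$ cannot hold as stated, because your accounting of component boundaries omits the two original endpoints $\partial W$: their images are always endpoints of homogeneous components of $F(W)$, and the $\ve$-bands around them pull back to sets of measure up to $2\ve/(\hat c\Lambda)$. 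For billiards in $\IK$ the one-step expansion $\hat c\Lambda$ may be as small as $1+O(\bar\tau^{\min}\kappa^{\min})$, so this contribution alone can exceed $\ve$ and there is no one-step contraction factor. The remedy---and this is precisely what the paper's remark about polynomial accumulation of tangencies versus exponential expansion is pointing at---is to work in blocks of $m$ steps, with $m$ chosen so that $\hat c\Lambda^{m}$ dominates the $m$-step complexity $K_{m}$; only then does one obtain a genuine contraction factor $\vartheta_m<1$, and the stated $\vartheta$ is $\vartheta_m^{1/m}$ after absorbing a constant into $C_{\mathrm{gr}}$.

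Second, even granting a one-step $\vartheta_0<1$, your iteration produces $Z_n(\ve)\leq C_{\mathrm d}\vartheta_0\,\ve\,\cZ'_{n-1}+C_{\mathrm d}C_0|W|\ve$ with $\cZ'_{n-1}=\sum_i|W_{n-1,i}^-|/|W_{n-1,i}|$, which is a harmonic-sum quantity of $\cZ$-type, \emph{not} $Z_{n-1}(\ve')$. Your proposed short/long split does not convert one into the other: for short components the sum involves their full lengths rather than $\ve'$-bands near their endpoints, and Lemma~\ref{lem:growth_bound}, which concerns a single curve surviving uncut, does not supply the needed contraction. Controlling $\cZ$-type quantities is precisely the content of Lemma~\ref{lem:Z}, which the paper \emph{deduces from} the Growth Lemma, so invoking it here would be circular. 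The iteration in \cite{ChernovMarkarian_2006} is carried out on $Z_n(\ve)$ directly: the endpoints of $n$-step components are classified by the block at which they were created, the $\ve$-band around an endpoint created in an earlier block pulls back through the intervening composition by an exponentially small factor $\leq 1/(\hat c\Lambda^{n-m})$, and the number of endpoints created per block is controlled by the complexity. Summing the resulting geometric series, rather than re-applying a one-step estimate to each component of $\cF_{n-1}W$ and carrying a $\cZ$-quantity, is what produces the $\vartheta^n+|W|$ structure.
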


This lemma has the following interpretation: It gives no information for small $n$
when $|W|$ is small. For  $n$ large enough, such as $n\geq |\log|W||/|\log\vartheta|$, one has $\fm_W\{r_{W,n}<\ve\}\leq 2C_\mathrm{gr}\ve|W|$. In other words, after a sufficiently long time $n$ (depending on the initial curve $W$), \emph{the majority} of points in $W$ have their images in homogeneous components of $\cF_n W$ that are longer than 
$1/(2C_\mathrm{gr})$, and the family of points belonging to shorter ones has a linearly decreasing tail.

\subsection{Measured unstable curves}\label{sec:curves}

A \emph{measured unstable curve} is a pair~$(W,\nu)$ where~$W$ is an unstable curve
and~$\nu$ is a finite Borel measure supported on it. 
Given a sequence $(\cK_n)_{n=0}^\infty$ and a measured unstable curve~$(W,\nu)$ with density $\rho=\rd\nu/\rd\fm_W$, we are interested in the following
dynamical H\"older condition of $\log \rho$: 
For $n\geq 1$, let~${\{W_{n,i}^-\}}_i$ be the canonical $n$-step subdivision of~$W$ as defined in Section~\ref{sec:discont_homog}.

\begin{lem}\label{lem:pair_image}
There exists a constant $C'_\mathrm{r}>0$ for which the following holds:
Suppose $\rho$ is a density on an unstable curve $W$ satisfying $|\log\rho(x)-\log\rho(y)|\leq C\theta^{s(x,y)}$ for all $x,y\in W$. Then, for any homogeneous component $W_{n,i}$, the density $\rho_{n,i}$ of the push-forward of $\nu|_{W_{n,i}}$
by the (invertible) map $\cF_n |_{W_{n,i}^-}$ satisfies
\beq\label{eq:reg_bound}
|\log\rho_{n,i}(x)-\log\rho_{n,i}(y)| \leq \left(\frac{C'_\mathrm{r}}2 + \Bigl(C-\frac{C'_\mathrm{r}}2\Bigr)\theta^n\!\right)\!\theta^{s(x,y)}
\eeq
for all $x,y\in W_{n,i}$. 
\end{lem}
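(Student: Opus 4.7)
The plan is to write the difference $\log\rho_{n,i}(x)-\log\rho_{n,i}(y)$ via change of variables and bound the two resulting pieces separately. For $x,y\in W_{n,i}$, let $u,v\in W_{n,i}^-$ be the unique preimages under the invertible restriction $\cF_n|_{W_{n,i}^-}$. From $\rho_{n,i}(x)\,\cJ_{W_{n,i}^-}\cF_n(u)=\rho(u)$ we get
\[
\log\rho_{n,i}(x)-\log\rho_{n,i}(y)\;=\;\bigl[\log\rho(u)-\log\rho(v)\bigr]\;-\;\bigl[\log\cJ_{W_{n,i}^-}\cF_n(u)-\log\cJ_{W_{n,i}^-}\cF_n(v)\bigr].
\]
The key observation for the first bracket is that, because $u$ and $v$ share the homogeneous component $W_{n,i}^-$, the inductive definition in Section~\ref{sec:discont_homog} forces $\cF_m u$ and $\cF_m v$ to occupy the same homogeneous component of $\cF_m W$ (hence the same $\IH_k$) for every $0\le m\le n$. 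From the definition of separation time this yields $s(u,v)=n+s(x,y)$, so the hypothesis on $\rho$ gives $|\log\rho(u)-\log\rho(v)|\le C\theta^{s(u,v)}=C\theta^n\,\theta^{s(x,y)}$.

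For the distortion bracket, I would apply Lemma~\ref{lem:distortion} to the subarc $W_{u,v}\subset W_{n,i}^-$ joining $u$ and $v$. By construction each $\cF_m W_{u,v}$ is contained in a single homogeneous component of $\cF_m W$ and is therefore itself homogeneous for $0\le m\le n$, so the lemma produces $|\log(\cJ_{W_{n,i}^-}\cF_n(u)/\cJ_{W_{n,i}^-}\cF_n(v))|\le C'_\mathrm{d}\,|\cF_n W_{u,v}|^{1/3}$. To convert this arc length into a geometric factor in $s(x,y)$, I would argue by backward contraction: the further image $\cF_{n+s(x,y)-1}W_{u,v}$ is still contained in one homogeneous component and hence has length bounded by a uniform $L$ (the maximal length of an unstable curve between scatterers); pulling back $s(x,y)-1$ steps via the uniform expansion estimate of Lemma~\ref{lem:cones} gives $|\cF_n W_{u,v}|\le \hat c^{-1} L\,\Lambda^{-(s(x,y)-1)}$. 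Since $\theta=\Lambda^{-1/6}$, the exponent $1/3$ of the distortion bound yields a uniform $C_0$ with
\[
\bigl|\log\cJ_{W_{n,i}^-}\cF_n(u)-\log\cJ_{W_{n,i}^-}\cF_n(v)\bigr|\;\le\;C_0\,\theta^{2s(x,y)}\;\le\;C_0\,\theta^{s(x,y)}.
\]

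Combining the two estimates gives $|\log\rho_{n,i}(x)-\log\rho_{n,i}(y)|\le (C\theta^n+C_0)\,\theta^{s(x,y)}$. To match~\eqref{eq:reg_bound} I would set $C'_\mathrm{r}:=2C_0/(1-\theta)$, a uniform constant. Then for every $n\ge1$, using $1-\theta^n\ge 1-\theta$,
\[
C\theta^n+C_0\;\le\;C\theta^n+\tfrac{C'_\mathrm{r}}{2}(1-\theta)\;\le\;C\theta^n+\tfrac{C'_\mathrm{r}}{2}(1-\theta^n)\;=\;\tfrac{C'_\mathrm{r}}{2}+\Bigl(C-\tfrac{C'_\mathrm{r}}{2}\Bigr)\theta^n,
\]
while the case $n=0$ is immediate since $\rho_{0,i}=\rho$ and the right-hand side of~\eqref{eq:reg_bound} reduces to $C\theta^{s(x,y)}$. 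I expect the main technical point to be the careful verification of the two geometric inputs --- the identity $s(u,v)=n+s(x,y)$ and the backward-contraction bound on $|\cF_n W_{u,v}|$ --- in the non-stationary setting, but both follow directly from the uniform statements in Lemmas~\ref{lem:cones} and~\ref{lem:distortion} and parallel the stationary arguments in~\cite{ChernovMarkarian_2006}; the rest is purely algebraic bookkeeping of constants.
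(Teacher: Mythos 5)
Your proof is correct and uses the same essential ingredients as the paper (the distortion bound of Lemma~\ref{lem:distortion}, the identification $s(u,v)=n+s(x,y)$, and the backward-contraction estimate encoded in Remark~\ref{rem:Holder_regular}/\eqref{eq:d&s}), but you implement them in a slightly different way. The paper first proves the one-step bound $C'\leq C\theta+\tfrac{1-\theta}{2}C_\mathrm{r}$ (equation~\eqref{eq:reg_iteration}) and then iterates it, observing that the constant converges geometrically to the fixed point $C_\mathrm{r}/2$ of the affine map $C\mapsto C\theta+\tfrac{1-\theta}{2}C_\mathrm{r}$. You instead apply the $n$-step distortion bound directly to a subarc of $W_{n,i}^-$ and control $|\cF_n W_{u,v}|^{1/3}$ via backward contraction, obtaining in one shot the intermediate bound $(C\theta^n+C_0)\theta^{s(x,y)}$ before massaging it into the stated form with $C'_\mathrm{r}=2C_0/(1-\theta)$. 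This is a legitimate alternative; it avoids the induction and arguably makes the source of each contribution more transparent, at the cost of requiring you to verify explicitly that $W_{u,v}$ remains homogeneous for $n$ steps (which you do, correctly, by noting that subsets of the homogeneous components $\cF_m W_{n,i}^-$ are homogeneous). The one-step-plus-iteration route in the paper has the minor advantage that it only invokes Lemma~\ref{lem:distortion} with a single iterate, but both proofs are sound and of comparable length.
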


Here $\theta$ is as in Lemma~\ref{lem:Jac_holonomy}.
We fix $C_\mathrm{r} \ge \max\{C'_\mathrm{r}, C_\bfh, 2\}$, where
$C_\bfh$ is also introduced in  Lemma~\ref{lem:Jac_holonomy}, and say a  
measure~$\nu$ supported on an unstable curve~$W$ is \emph{regular} if it is absolutely continuous with respect to~$\fm_W$ and its density $\rho$ satisfies 
\beq\label{eq:density_reg}
|\log\rho(x)-\log\rho(y)| \leq C_\mathrm{r}\theta^{s(x,y)}
\eeq
for all $x,y\in W$. As with $s(\cdot, \cdot)$, the regularity of $\nu$
is $(\cK_n)$-dependent. Notice that under this definition, if a measure on $W$
is regular, then so are its forward images. More precisely, in the notation of Lemma \ref{lem:pair_image}, if $\rho$ is regular, then so is each $\rho_{n,i}$.
We also say the pair $(W,\nu)$ is \emph{regular} if both the unstable curve $W$
and the measure $\nu$ are regular.

\begin{remark}\label{rem:Holder_regular}
The separation time $s(x,y)$ is connected to the Euclidean distance $d_\cM(x,y)$ in the following way. If $x$ and $y$ are connected by an unstable curve $W$, then $|\cF_n W|\geq \hat c\Lambda^n|W|\geq \hat c\Lambda^n d_\cM(x,y)$ for $0\leq n<s(x,y)$. Since $\cF_{s(x,y)-1}W$ is a homogeneous unstable curve, its length is uniformly bounded above. Thus,
\beq\label{eq:d&s}
d_\cM(x,y) \leq C_\mathrm{s}\Lambda^{-s(x,y)} = C_\mathrm{s}\theta^{6 s(x,y)}
\eeq
for a uniform constant $C_\mathrm{s}>0$. In particular, if $\rho$ is a nonnegative density on an unstable curve~$W$ such that $\log\rho$ is H\"older continuous with exponent $1/6$ and constant $C_\mathrm{r}C_\mathrm{s}^{-1/6}$, then~$\rho$ is regular with respect to \emph{any configuration sequence}.
\end{remark}

\begin{proof}[Proof of Lemma~\ref{lem:pair_image}]
Consider $n=1$ and take two points $x,y$ on one of the homogeneous components $W_{1,i}$. Let the corresponding preimages be $x_{-1},y_{-1}$. Since $s(x_{-1},y_{-1})=s(x,y)+1$, the bound \eqref{eq:distortion} yields
\beqn
\begin{split}
|\log\rho_{1,i}(x)-\log\rho_{1,i}(y)| &\leq \left|\log\frac{\rho(x_{-1})}{\cJ_WF_1(x_{-1})}-\log\frac{\rho(y_{-1})}{\cJ_WF_1(y_{-1})}\right| 
\\
& \leq  |\log\rho(x_{-1})-\log\rho(y_{-1})| + |\log \cJ_WF_1(x_{-1})-\log \cJ_WF_1(y_{-1})|
\\
& \leq C\theta^{s(x,y)+1}+ C_\mathrm{d}' |W_{1,i}(x,y)|^{1/3}, 
\end{split}
\eeqn
where $|W_{1,i}(x,y)|$ is the length of the segment of the unstable curve $W_{1,i}$ connecting $x$ and~$y$. Because of the unstable cones, the latter is uniformly proportional to the distance $d_\cM(x,y)$ of $x$ and $y$. Recalling \eqref{eq:d&s}, we thus get $C_\mathrm{d}' |W_{1,i}(x,y)|^{1/3}\leq C_\mathrm{d}''\theta^{2s(x,y)}\leq C_\mathrm{d}''\theta^{s(x,y)}$ for another uniform constant $C_\mathrm{d}''>0$. Let us now pick any $C_\mathrm{r}$ such that $C_\mathrm{r}\geq 2C_\mathrm{d}''/(1-\theta)$. For then $C_\mathrm{r}\theta+C_\mathrm{d}''\leq \frac{1+\theta}{2}C_\mathrm{r}$, and we have $|\log\rho_{1,i}(x)-\log\rho_{1,i}(y)|\leq C'\theta^{s(x,y)}$ with
\beq\label{eq:reg_iteration}
C' \leq C\theta+C_\mathrm{d}'' = (C-C_\mathrm{r})\theta + (C_\mathrm{r}\theta+C_\mathrm{d}'') \leq  (C-C_\mathrm{r})\theta + \frac{1+\theta}{2}C_\mathrm{r} = C\theta + \frac{1-\theta}{2}C_\mathrm{r}. 
\eeq
We may iterate \eqref{eq:reg_iteration} inductively, observing that at each step the constant $C$ obtained at the previous step is contracted by a factor of $\theta$ towards $ \frac{1-\theta}{2}C_\mathrm{r}$. It is now a simple task to obtain~\eqref{eq:reg_bound}. The constant $C_\mathrm{r}$ was chosen so that the image of a regular density is regular and the image of a non-regular density will become regular in finitely many steps.
\end{proof}

The following extension property of \eqref{eq:density_reg} will be necessary. We give a proof in the Appendix.
\begin{lem}\label{lem:regular_extension}
Suppose $W_\star$ is a closed subset of an unstable curve $W$, and that $W_\star$ includes the endpoints of $W$. Assume that the function $\rho$ is defined on $W_\star$ and that there exists a constant $C>0$ such that $|\log\rho(x)-\log\rho(y)| \leq C\theta^{s(x,y)}$ for every pair $(x,y)$ in $W_\star$. Then, $\rho$ can be extended to all of $W$ in such a way that the inequality involving $\log \rho$ above holds on $W$, 
the extension is piecewise constant, $\min_{W}\rho = \min_{W_\star}\rho$, 
and $\max_{W}\rho = \max_{W_\star}\rho$.
\end{lem}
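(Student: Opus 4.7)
The plan is to define a piecewise-constant extension $\tilde\rho$ by assigning to each $x\notin W_\star$ the value $\rho(y^*(x))$ for a judiciously chosen $y^*(x)\in W_\star$, and then to verify the desired bound using only two basic properties of the separation time.

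Since $W_\star$ is closed in the interval $W$ and contains its endpoints, $W\setminus W_\star$ decomposes into an at most countable disjoint union of open sub-intervals $(a_i,b_i)$ with $a_i,b_i\in W_\star$. The two facts I will use are the ultrametric inequality $s(u,w)\geq\min(s(u,v),s(v,w))$ for every triple $(u,v,w)$ in $W$, and the monotonicity $s(u,v),s(u,w)\geq s(v,w)$ whenever $u$ lies on the sub-arc of $W$ between $v$ and $w$; both are immediate consequences of the fact that the canonical $n$-step subdivisions of $W$ consist of sub-arcs. I set $y^*(x)=x$ for $x\in W_\star$, and for $x$ in a gap $(a,b)$ put $y^*(x)=a$ if $s(x,a)\geq s(x,b)$ and $y^*(x)=b$ otherwise; finally $\tilde\rho(x):=\rho(y^*(x))$. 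The set $\{x\in(a,b):s(x,b)>s(x,a)\}$ is precisely the intersection of $(a,b)$ with $b$'s cell at time $n_0:=s(a,b)$ and is therefore itself a sub-arc, which makes $\tilde\rho$ piecewise constant with at most two pieces per gap. The min/max equalities and the identity $\tilde\rho|_{W_\star}=\rho$ are automatic, since $\tilde\rho$ takes only values of $\rho$ on $W_\star$.

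It remains to verify $|\log\tilde\rho(x)-\log\tilde\rho(y)|\leq C\theta^{s(x,y)}$, which by the hypothesis on $\rho$ reduces to $s(y^*(x),y^*(y))\geq s(x,y)$. When $x,y$ lie in a common gap $(a,b)$ with $y^*(x)\neq y^*(y)$, they lie in distinct cells at time $n_0$, so $s(x,y)\leq n_0=s(y^*(x),y^*(y))$. When $x$ lies in a gap $(a,b)$ and $y\notin(a,b)$, the endpoint of $(a,b)$ on the arc between $x$ and $y$, call it $c$, satisfies $s(c,y)\geq s(x,y)$ by monotonicity; since by construction $s(y^*(x),x)=\max(s(x,a),s(x,b))\geq s(x,c)\geq s(x,y)$, the ultrametric step then yields $s(y^*(x),y)\geq\min(s(y^*(x),x),s(x,y))\geq s(x,y)$. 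If $y$ is also in a gap, chaining one further ultrametric step through $y^*(y)$ finishes. The main subtlety is precisely the choice of $y^*$: a naive single-value extension $\tilde\rho|_{(a,b)}\equiv\rho(a)$ fails because for $y$ well past $b$ the point $x$ may shadow $y$ far longer than $a$ does, so that $s(x,y)\gg s(a,y)$, and the required bound on $|\log\rho(a)-\log\rho(y)|$ in terms of $\theta^{s(x,y)}$ is then unavailable. Selecting $y^*(x)$ to maximize $s(x,\cdot)$ over $W_\star$ is exactly what makes the ultrametric chain above close.
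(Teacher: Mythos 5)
Your proof is correct, and it arrives at essentially the same two-piece-per-gap extension as the paper, but with a cleaner, more systematic verification. The paper picks, for each gap with endpoints $a,b$ and $r=s(a,b)$, an \emph{arbitrary} point $z$ in the gap with $\cF_r(z)$ on the bad set and splits at $z$; you instead split canonically at the boundary of $b$'s cell at time $n_0=s(a,b)$, which is one particular admissible choice of $z$, and your rule can be phrased intrinsically as ``assign to $x$ the value at the endpoint that stays with $x$ longest.'' This makes the verification transparent: the two facts you isolate --- the ultrametric inequality $s(u,w)\ge\min(s(u,v),s(v,w))$ and the monotonicity of $s$ along sub-arcs (both immediate from the fact that cells of the canonical $n$-step subdivision are sub-arcs) --- reduce the required estimate to the chain $s(y^*(x),y^*(y))\ge s(x,y)$, which closes in at most two ultrametric steps. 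The paper instead handles the verification through a somewhat less transparent case analysis on whether $W(x,y)$ contains or is contained in $W(x',y')$. Your closing remark explaining why the single-value-per-gap extension fails (the required inequality $s(a,y)\ge s(x,y)$ goes the wrong way when $x$ lies between $a$ and $y$) is also a genuinely useful observation that clarifies why the two-piece structure and the maximizing choice of $y^*$ are forced. In short: same construction up to the choice of split point, but a different and arguably crisper route through the estimates.
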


\subsection{Families of measured unstable curves}\label{sec:stacks}
Here we extend the idea of measured unstable curves to measured families of unstable
stacks. We begin with the following definitions:

\smallskip \noindent
(i)  We call $\cup_{\alpha \in E} W_\alpha \subset
\cM$ a \emph{stack of unstable curves}, or simply an \emph{unstable stack}, if $E\subset \IR$ is an open interval, each $W_\alpha$ is an unstable curve, and there
is a map $\psi: [0,1]\times E\to \cM$ which is
a homeomorphism onto its image 
so that, for each $\alpha \in E$, $\psi\bigl([0,1]\times \{\alpha\}\bigr)=W_\alpha$. 

\smallskip \noindent
(ii) The unstable stack $\cup_{\alpha\in E} W_\alpha$ is said to be \emph{regular} if each
$W_\alpha$ is regular as an unstable curve.

\smallskip \noindent
(iii) We call $(\cup_{\alpha\in E} W_\alpha, \mu)$ a \emph{measured unstable stack}
if $U = \cup_{\alpha\in E} W_\alpha$ is an unstable stack and $\mu$ is a finite Borel measure
on $U$. 

\smallskip \noindent
(iv) we say  $(\cup_{\alpha\in E} W_\alpha, \mu)$ is
\emph{regular} if (a) as a stack $\cup_{\alpha\in E} W_\alpha$ is regular and 
(b) the conditional probability measures $\mu_\alpha$ of~$\mu$ on~$W_\alpha$ are regular. More precisely, 
\mbox{$\{W_\alpha,\alpha\in E\}$} is a measurable partition of 
$\cup_{\alpha\in E} W_\alpha$, and $\{\mu_\alpha\}$ is a version of
the disintegration of $\mu$ with respect to this partition, that is to say, 
for any Borel set $B\subset \cM$, we have 
\beqn
\mu(B) = \int_E \mu_\alpha(W_\alpha\cap B)\,\rd P(\alpha)
\eeqn
where $P$ is a finite Borel measure on $I$. The conditional measures $\{\mu_\alpha\}$
are unique up to a set of $P$-measure $0$, and (b) requires that
 $(W_\alpha, \mu_\alpha)$ be regular
in the sense of Section~\ref{sec:curves} for $P$-a.e.~$\alpha$.

\smallskip
Consider next a sequence $(\cK_n)_{n=0}^\infty$ and a fixed~$n\geq 1$. Denote by~$\cD_{n,i}$, $i\geq 1$, the countably many connected components of the set $\cM\setminus \cup_{1\leq m\leq n}(\cF_m)^{-1}(\partial\cM\cup\cup_{|k|\geq k_0}\partial\IH_k)$. In analogy
with unstable curves, we define the \emph{canonical $n$-step subdivision} of a regular 
unstable stack $\cup_\alpha W_\alpha$: Let $(n,i)$ be such that 
$\cup_\alpha W_\alpha \cap \cD_{n,i}\neq\emptyset$, and let
$E_{n,i} = \{\alpha \in E: W_\alpha \cap \cD_{n,i}\neq\emptyset\}$. Pick one of the (finitely our countably many) components $E_{n,i,j}$ of $E_{n,i}$.

We claim $\cup_{\alpha \in E_{n,i,j}} (W_\alpha \cap \overline\cD_{n,i})$ is an unstable stack,
and define 
$$\psi_{n,i,j}: [0,1]\times E_{n,i,j} \ \to \ \cup_{\alpha\in E_{n,i,j}} W_\alpha \cap \overline\cD_{n,i}
$$
as follows: for $\alpha \in E_{n,i}$, $\psi_{n,i}|_{[0,1]\times \{\alpha\}}$ is
equal to $\psi|_{[0,1]\times \{\alpha\}}$ followed by a linear contraction from~$W_\alpha$ to~$W_\alpha \cap \overline\cD_{n,i}$. For this construction to work, it is imperative
that $W_\alpha \cap \cD_{n,i}$ be connected, and that is true, for
by definition, $W_\alpha \cap \cD_{n,i}$ is an element of the canonical $n$-step 
subdivision of $W_\alpha$.
It is also clear that 
$\cup_{\alpha\in E_{n,i,j}} \cF_n(W_\alpha \cap \overline\cD_{n,i})$ is an unstable stack, with 
the defining homeomorphism given by~$\cF_n \circ \psi_{n,i,j}$.

What we have argued in the last paragraph is that the $\cF_n$-image of an unstable
stack $\cup_\alpha W_\alpha$ is the union of at most countably many
unstable stacks. Similarly, the $\cF_n$-image of a measured unstable stack is
the union of measured unstable stacks, and
by Lemmas~\ref{lem:curvature} and~\ref{lem:pair_image}, regular measured unstable stacks are mapped to
unions of regular measured unstable stacks.

\medskip
\begin{figure}[!ht]
\includegraphics[width=0.5\linewidth]{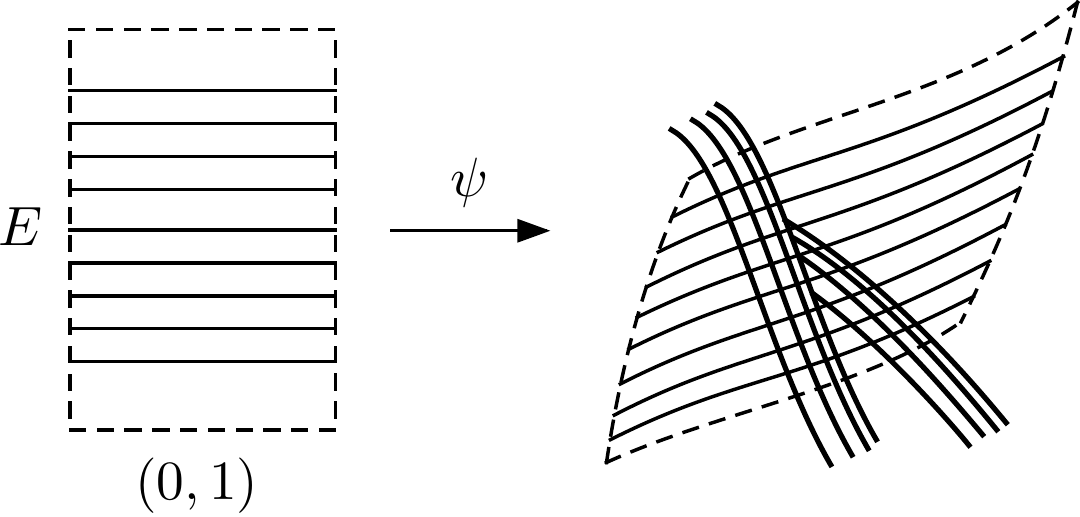}
\caption{A schematic illustration of an unstable stack and its dynamics. The regular unstable curves on the right are the images of the horizontal lines under the homeomorphism $\psi$. The curves with negative slopes represent the countably many branches of the $n$-step singularity set $\cup_{1\leq m\leq n}(\cF_m)^{-1}(\partial\cM\cup\cup_{|k|\geq k_0}\partial\IH_k)$. The canonical $n$-step subdivision of the unstable curves yields countably many unstable stacks.}
\label{fig:stack}
\end{figure}

The discussion above motivates the definition of \emph{measured unstable families},
defined to be convex combinations of measured unstable stacks. That is to say,
we have a countable collection of unstable stacks $\cup_{\alpha \in E_j} W^{(j)}_\alpha$
parametrized by $j$, and a measure $\mu=\sum_j a^{(j)} \mu^{(j)}$ with the property
that for each~$j$, $(\cup_\alpha W^{(j)}_\alpha, \mu^{(j)})$ is a measured unstable stack
and $\sum_j a^{(j)} = 1$. We permit the stacks to overlap, i.e., for $j \ne j'$, we permit
$\cup_\alpha W^{(j)}_\alpha$ and $\cup_{\alpha} W^{(j')}_{\alpha}$ to meet.
This is natural because in the case of moving scatterers, the maps $\cF_n$ are not
one-to-one; even if two stacks have disjoint supports, this property is not retained
by the forward images.
Regularity for measured unstable families is defined similarly. 
The idea of canonical $n$-step subdivision passes
easily to measured unstable families, and we can sum up the discussion by saying
that given $(\cK_n)$, push-forwards of measured unstable families are again
measured unstable families, and regularity is preserved.

\medskip
So far, we have not discussed the lengths of the unstable curves in an unstable
stack or family. Following \cite{ChernovMarkarian_2006}, we introduce, for a measured
unstable family defined by $(\cup_\alpha W^{(j)}_\alpha, \mu^{(j)})$ and 
$\mu=\sum_j a^{(j)} \mu^{(j)}$, the quantity
\beq\label{eq:Z_cont}
\cZ =  \sum_j a^{(j)}\int_{\IR} \frac{1}{|W^{(j)}_\alpha|}\,\rd P^{(j)}(\alpha).
\eeq
Informally, the smaller the value of $\cZ/\mu(\cM)$ the smaller the fraction of $\mu$ supported on short unstable curves.


For $\mu$ as above, let $\cZ_n$ denote the quantity
corresponding to $\cZ$ for the push-forward $\sum_k a_{n,k}\, \mu_{n,k}$ 
of the canonical $n$-step subdivision of $\mu$ discussed earlier. We have
the following control on $\cZ_n$:

\begin{lem}\label{lem:Z}
There exist uniform constants $C_\mathrm{p}>1$ and $\vartheta_\mathrm{p}\in(0,1)$ such that
\beq\label{eq:Z_decay}
\frac{\cZ_n}{\mu(\cM)} \leq \frac{C_\mathrm{p}}2 \! \left(1+\vartheta_\mathrm{p}^n\frac{\cZ}{\mu(\cM)}\right)
\eeq
holds true for any regular measured unstable family.
\end{lem}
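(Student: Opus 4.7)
\medskip

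The plan is to reduce the statement to a single regular measured unstable curve via the convex-combination and disintegration structure, and then deduce the bound from the Growth Lemma (Lemma~\ref{lem:growth}) together with the distortion bound (Lemma~\ref{lem:distortion}) and the regularity of conditional densities (Section~\ref{sec:curves}).

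First I would carry out two successive reductions. Because both $\cZ$ and $\mu(\cM)$ are affine functions of the convex-combination weights $a^{(j)}$ and of the conditional/transverse measures, the claimed inequality is preserved under taking convex combinations of regular measured unstable families. Thus it suffices to prove the bound for a single regular measured unstable stack $(\cup_{\alpha\in E} W_\alpha,\mu)$. Next, disintegrating $\mu$ along the transverse coordinate and observing that the canonical $n$-step subdivision acts fiberwise on each $W_\alpha$, both sides of the inequality split as $\int \cdots \rd P(\alpha)$; it therefore suffices to prove a pointwise bound in $\alpha$, i.e.\ for a single regular measured unstable curve $(W,\nu)$ with density $\rho$ satisfying \eqref{eq:density_reg}, with $\cZ = \nu(W)/|W|$ and $\mu(\cM) = \nu(W)$.

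For such a single curve $(W,\nu)$ I would expand
\[
\cZ_n \;=\; \sum_i \frac{\nu(W_{n,i}^-)}{|W_{n,i}|},
\]
where $\{W_{n,i}^-\}_i$ is the canonical $n$-step subdivision of $W$ and $W_{n,i}=\cF_n(W_{n,i}^-)$ is homogeneous. Fix a small threshold $\delta_0>0$ to be chosen, and split the sum into long pieces ($|W_{n,i}|\ge \delta_0$) and short pieces ($|W_{n,i}|<\delta_0$). The long-piece contribution is immediate:
\[
\sum_{i:\,|W_{n,i}|\ge \delta_0}\frac{\nu(W_{n,i}^-)}{|W_{n,i}|} \;\le\; \frac{\nu(W)}{\delta_0},
\]
which provides the additive term $\tfrac{C_\mathrm{p}}{2}\mu(\cM)$. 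For the short pieces, the point $x\in W_{n,i}^-$ satisfies $r_{W,n}(x)\le|W_{n,i}|$, so layering dyadically in $\delta\in(0,\delta_0]$ and applying the Growth lemma gives
\[
\fm_W\{x:\,|W_{n,i(x)}|<\delta\} \;\le\; C_\mathrm{gr}(\vartheta^n + |W|)\,\delta .
\]
Combined with the distortion bound \eqref{eq:distortion}, the regularity \eqref{eq:density_reg} of $\rho$, and the quadratic shrinking of the homogeneity strips $\IH_k$ (which forces the Jacobian on any piece $W_{n,i}$ landing in $\IH_k$ to be of order at least $k^2\Lambda^n$ by \eqref{eq:cos_bound}), a dyadic summation over $k$ converges and yields
\[
\sum_{i:\,|W_{n,i}|<\delta_0}\frac{\nu(W_{n,i}^-)}{|W_{n,i}|} \;\le\; C'\bigl(\vartheta^n + |W|\bigr)\,\tfrac{\nu(W)}{|W|} \;=\; C'\,\vartheta^n\,\cZ + C'\,\nu(W).
\]
Choosing $\delta_0$ once and for all, adjusting constants, and dividing by $\mu(\cM)=\nu(W)$ produces \eqref{eq:Z_decay} with some $\vartheta_\mathrm{p}\in(0,1)$ and uniform $C_\mathrm{p}$.

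The main obstacle I expect is the summability of the short-piece contribution: the number of homogeneous components of $\cF_nW$ can grow without bound as tangencies accumulate and as pieces cascade into higher-index homogeneity strips $\IH_k$. Controlling this requires coupling two separate mechanisms, namely the Growth lemma (to bound the $\fm_W$-measure of points lying in short pieces) and the quantitative structure of $\IH_k$ together with the distortion estimate (to convert $|W_{n,i}^-|$-information into $|W_{n,i}|^{-1}$-information without losing the convergence of the $k$-sum). The remaining verification — that the constants obtained are uniform across all configuration sequences $(\cK_n)_{n=0}^N\subset\IK$ — follows because all geometric constants used ($\kappa^{\min}$, $\kappa^{\max}$, $\bar\tau^{\min}$, $\ft$, $\varphi$, the distortion constants, and the Growth lemma constants) are uniform over $\IK$ by the Standing Hypothesis.
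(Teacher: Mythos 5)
The reduction to a single measured unstable curve and the long-piece bound are fine, but the short-piece step has a genuine gap. The dyadic layering you propose does not converge: setting $A_m=\{i: 2^{-m-1}\delta_0\leq|W_{n,i}|<2^{-m}\delta_0\}$ and applying the Growth lemma to $\fm_W\{|W_{n,i(\cdot)}|<2^{-m}\delta_0\}$ gives
\[
\sum_{i\in A_m}\frac{\nu(W_{n,i}^-)}{|W_{n,i}|}\;\leq\;\frac{\sup\rho\cdot C_\mathrm{gr}(\vartheta^n+|W|)\,2^{-m}\delta_0}{2^{-m-1}\delta_0}\;=\;2\sup\rho\cdot C_\mathrm{gr}(\vartheta^n+|W|)
\]
for \emph{every} $m$ --- the two factors of $2^{-m}\delta_0$ cancel exactly --- so the sum over $m$ is infinite. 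The extra $k^{-2}$ gain you invoke from \eqref{eq:cos_bound} does not repair this: it is a per-piece bound coming from the last step, while the number of $n$-step homogeneous components landing in a given $\IH_k$ is not uniformly bounded in $n$; summing your estimate over $k$ and over pieces would require multiplicity control that the proposal does not supply. In effect you are trying to re-derive a strictly sub-linear tail for $\fm_W\{r_{W,n}<\ve\}$, which the statement of Lemma~\ref{lem:growth} alone cannot give.

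The argument that actually works --- and which justifies the paper's remark that the result is a ``direct consequence of the Growth lemma'' --- does not isolate the short pieces at all. Fix $\ve>0$, let $\rho_{n,i}$ be the push-forward density of $\nu|_{W_{n,i}^-}$ on $W_{n,i}$, which is regular by Lemma~\ref{lem:pair_image}, so $\sup\rho_{n,i}\leq e^{C_\mathrm{r}}\inf\rho_{n,i}$. For every $i$ with $|W_{n,i}|>2\ve$ the two $\ve$-end-segments of $W_{n,i}$ are disjoint with total length $2\ve$, giving
\[
\frac{\nu(W_{n,i}^-)}{|W_{n,i}|}\;\leq\;\sup\rho_{n,i}\;\leq\;e^{C_\mathrm{r}}\inf\rho_{n,i}\;\leq\;\frac{e^{C_\mathrm{r}}}{2\ve}\,\nu\bigl(W_{n,i}^-\cap\{r_{W,n}<\ve\}\bigr).
\]
Summing over such $i$, bounding $\nu\{r_{W,n}<\ve\}\leq\sup_W\rho\cdot\fm_W\{r_{W,n}<\ve\}$, using $\sup_W\rho\leq e^{C_\mathrm{r}}\nu(W)/|W|$ and the Growth lemma, one obtains
\[
\sum_{i:\,|W_{n,i}|>2\ve}\frac{\nu(W_{n,i}^-)}{|W_{n,i}|}\;\leq\;\frac{e^{2C_\mathrm{r}}C_\mathrm{gr}}{2}\,\bigl(\vartheta^n\,\cZ+\nu(W)\bigr),
\]
a bound uniform in $\ve$. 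Letting $\ve\to0$ gives \eqref{eq:Z_decay} with $\vartheta_\mathrm{p}=\vartheta$ and $C_\mathrm{p}=\max\bigl(e^{2C_\mathrm{r}}C_\mathrm{gr},2\bigr)$, with no dyadic sum needed; integrating over the transverse coordinate then covers a general family, exactly as in your first reduction.
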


This result can be interpreted as saying that given an initial measure $\mu$ which has a high fraction of its mass supported on short unstable curves --- yielding a large value of $\cZ/\mu(\cM)$ --- the mass gets quickly redistributed by the dynamics to the longer homogeneous components of the image measures, so that $\cZ_n/\mu(\cM)$ decreases exponentially, until a level safely below $C_\mathrm{p}$ is reached. As regular densities remain regular (Lemma~\ref{lem:pair_image}), and as the supremum and the infimum of a regular density are uniformly proportional, Lemma~\ref{lem:Z} is a direct consequence of the Growth lemma; see above.
See \cite{ChernovMarkarian_2006} for the fixed configuration case; the time-dependent
case is analogous.

\begin{defn}\label{defn:proper}
A regular measured unstable family is called \emph{proper} if 
$\cZ<C_\mathrm{p}\mu(\cM)$. 
\end{defn}

Notice that $\cZ<C_\mathrm{p}\mu(\cM)$ implies, by Markov's inequality and $\sum_j a^{(j)}P^{(j)}(\IR) =\mu(\cM)$, that
\beq\label{eq:proper_curves}
\sum_j a^{(j)} P^{(j)}\! \left\{\alpha\in\IR\,:\, |W^{(j)}_\alpha |\geq (2C_\mathrm{p})^{-1}\right\} \geq \tfrac 12\mu(\cM).
\eeq
In other words, if~$\mu$ is proper, then at least half of it is supported on unstable curves
of length~$\geq(2C_\mathrm{p})^{-1}$. Notice  also that for any measured unstable family with  $\cZ<\infty$, Lemma~\ref{lem:Z} shows that the push-forward of such a family will
eventually become proper. Starting from a proper family, it is possible that $\cZ_n\geq C_\mathrm{p}\mu(\cM)$ for a finite number of steps; however, \eqref{eq:Z_decay} implies that there exists a uniform constant $n_\mathrm{p}$ such that $\cZ_n<C_\mathrm{p}\mu(\cM)$ for all $n\geq n_\mathrm{p}$.

\begin{remark} \label{summary}
{\rm The results of Sections~\ref{sec:growth}--\ref{sec:stacks} can be summarized as follows:}

\noindent
(i) The $\cF_n$-image of an unstable stack  is the union of
at most countably many such stacks. 

\noindent
(ii) Regular measured unstable stacks are
mapped to unions of the same, and 

\noindent
(iii) the $\cF_n$-image
of a proper measured unstable family is proper for $n \ge n_p$.
\end{remark}

\smallskip
\subsection{Statements of Theorems 1'--2' and 4'}\label{sec:primed_statements}
We are finally ready to give a precise statement of Theorem~1', which permits more
general initial distributions than Theorem~\ref{thm:weak_conv_compact} as stated in Section~\ref{sec:results}. 

\bigskip \noindent
{\bf Theorem 1'.} \ {\it
There exists $\ve>0$ such that the following holds. Let $(\cup_\alpha W^i_\alpha,\mu^i)$, $i=1,2$, be measured unstable stacks. Assume~$\cZ^i<\infty$ and that the conditional densities satisfy $|\log\rho^i_\alpha(x)-\log\rho^i_\alpha(y)|\leq C^i\theta^{s(x,y)}$ for all $x,y\in W^i_\alpha$.
Given $\gamma>0$, there exist $0<\theta_\gamma<1$ and $C_\gamma>0$ such that
\beqn
\left|\int_{\cM} f\circ \cF_n\, \rd\mu^1 - \int_{\cM} f\circ \cF_n\, \rd\mu^2\right| \leq C_\gamma ({\| f \|}_\infty + {|f|}_\gamma)\theta_\gamma^n, \quad n\leq N,
\eeqn
for all sequences $(\cK_n)_{n=0}^N\subset\IK$ ($N\in \IN\cup\{\infty\}$) satisfying $d(\cK_{n-1},\cK_n)<\ve$ for $1\leq n\leq N$, and all
 $\gamma$-H\"older continuous $f:\cM\to\IR$. The constant $C_\gamma$ depends on $\max(C^1,C^2)$ and $\max(\cZ^1,\cZ^2)$, while $\theta_\gamma$ does not.
}

\bigskip
Let us say a finite Borel measure $\mu$ is \emph{regular on unstable curves} if
it admits a representation as the measure in a regular measured unstable family
with $\cZ<\infty$, {\it proper} if additionally it admits a representation 
that is proper. It follows from Lemmas \ref{lem:pair_image} and 
\ref{lem:Z} that in Theorem~1', after a finite number of steps depending
on $C^i$ and $\cZ^i$, the pushforward of $\mu^i$ will become regular
on unstable curves and proper.

For completeness, we provide a proof of the following in the Appendix.

\begin{lem} \label{1'implies1}
Theorem~1' generalizes Theorem~\ref{thm:weak_conv_compact}.
\end{lem}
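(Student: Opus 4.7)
The strategy is to realize each probability measure $\mu^i$ as a countable measured unstable family satisfying the hypotheses of Theorem~1', and then invoke Theorem~1' extended by linearity to families. Since by Lemma~\ref{lem:cones} the unstable cone is the constant wedge $\{\kappa^{\min}\leq\rd\varphi/\rd r\leq \kappa^{\max}+2/\bar\tau^{\min}\}$, I would foliate each component $\cM_j=\Gamma_j\times[-\pi/2,\pi/2]$ of $\cM$ by parallel straight segments $\{\varphi=\alpha+sr\}$ of a fixed slope $s$ in the interior of this cone; every leaf is then an unstable curve of zero curvature, hence automatically regular. To prevent the conditional densities from vanishing at $\varphi=\pm\pi/2$, I refine this foliation by intersecting each leaf with the homogeneity strips, producing one stack $\{W_{\alpha,k}^{(j)}:\alpha\in E_{j,k}\}$ per pair $(j,k)$ with $k\in\{0,\pm k_0,\pm(k_0+1),\dots\}$.

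Parametrizing the foliation by $\alpha=\varphi-sr$ so that $\rd r\,\rd\varphi=\rd r\,\rd\alpha$, Fubini disintegrates $\mu^i=\rho^i\cos\varphi\,\rd r\,\rd\varphi$ with conditional density on $W_{\alpha,k}^{(j)}$ proportional to $\rho^i(r,\alpha+sr)\cos(\alpha+sr)$, whose logarithm equals $\log\rho^i+\log\cos\varphi$ modulo an additive constant. Strict positivity and $\tfrac16$-H\"older continuity of $\rho^i$ give a $\tfrac16$-H\"older bound on $\log\rho^i$ with constant controlled by $|\rho^i|_{1/6}/\min\rho^i$. On $\IH_k$, the function $\log\cos\varphi$ has Lipschitz constant $|\tan\varphi|=O(k^2)$ over a $\varphi$-range of width $O(k^{-3})$, hence $\tfrac16$-H\"older constant $O(k^2\cdot k^{-3\cdot 5/6})=O(k^{-1/2})$, uniformly bounded in $k$. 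Thus every conditional log-density is $\tfrac16$-H\"older on its leaf with a single constant $H=H(\rho^1,\rho^2)$, and Remark~\ref{rem:Holder_regular} upgrades this to
\beqn
|\log\rho^i_{\alpha,k}(x)-\log\rho^i_{\alpha,k}(y)|\leq HC_\mathrm{s}^{1/6}\,\theta^{s(x,y)}
\eeqn
uniformly across all stacks, which is precisely the dynamical H\"older hypothesis of Theorem~1'.

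For the $\cZ$-bound, every leaf piece has length comparable to $k^{-3}$ (slope $s$ bounded away from $0$ and $\infty$), while the stack weight is $a^{(k,i)}=\mu^i(\IH_k)\leq\|\rho^i\|_\infty\,\mathrm{area}(\IH_k)\,\max_{\IH_k}\cos\varphi=O(k^{-5})$; hence the contribution of $\IH_k$ to $\cZ^i$ is $O(k^{-2})$, summable in $k$, with a harmless $O(1)$ contribution from $\IH_0$. Applying Theorem~1' stack-by-stack to the pairs $(k,k')$ and telescoping via $\sum_k a^{(k,1)}=\sum_{k'}a^{(k',2)}=1$ bounds the left-hand side of \eqref{eq:convergerate} by
\beqn
\sum_{k,k'} a^{(k,1)}a^{(k',2)}\,C_\gamma^{(k,k')}\bigl(\|f\|_\infty+|f|_\gamma\bigr)\theta_\gamma^n.
\eeqn
The main point requiring attention is the dependence of $C_\gamma^{(k,k')}$ on the stack-level $\cZ^{(k,i)}\sim k^3$: since $\cZ$ enters the proof of Theorem~1' only through Lemma~\ref{lem:Z}, which guarantees properness after $O(\log\cZ)$ steps during which densities remain regular by Lemma~\ref{lem:pair_image}, this dependence is at most polynomial, so the double sum converges and yields~\eqref{eq:convergerate}.
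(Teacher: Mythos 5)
Your foliation of $\cM$ by straight unstable segments inside the homogeneity strips, the H\"older estimate on the conditional log-densities (the paper derives a uniform $\tfrac13$-H\"older constant for $\log\cos\varphi$; your $O(k^{-1/2})$ bound on the $\tfrac16$-H\"older constant is an equivalent fact), and the verification $\cZ^i<\infty$ via $a^{(k,i)}\sim k^{-5}$ against leaf length $\sim k^{-3}$ all match the paper's construction. Where you diverge is the final step, and that is where the gap lies. The paper treats the countable collection of strip-stacks as a \emph{single} measured unstable family and applies Theorem~1' (in the family form it actually proves as Theorem~4') once, using only that the \emph{global} $\cZ=\sum_j a^{(j)}\int|W^{(j)}_\alpha|^{-1}\,\rd P^{(j)}$ is finite. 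You instead normalize each strip-stack to a probability measure, apply Theorem~1' to every pair $(k,k')$, and sum.

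That pairwise decomposition is not justified as written. Each single-strip stack has $\cZ^{(k,i)}\sim k^3$, and because a large $\cZ$ forces a burn-in of roughly $\log\cZ/|\log\vartheta_\mathrm{p}|$ steps during which no coupling occurs, absorbing that delay into a bound of the form $C_\gamma\theta_\gamma^n$ costs a factor $\theta_\gamma^{-O(\log\cZ)}=\cZ^{\,|\log\theta_\gamma|/|\log\vartheta_\mathrm{p}|}$. So the dependence is indeed polynomial, but the exponent $d=|\log\theta_\gamma|/|\log\vartheta_\mathrm{p}|$ is uncontrolled: $\theta_\gamma$ and $\vartheta_\mathrm{p}$ are independent system constants and nothing forces $d<4/3$. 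Yet your double sum $\sum_{k,k'}a^{(k,1)}a^{(k',2)}C_\gamma^{(k,k')}$, with $a^{(k,i)}=O(k^{-5})$ and $C_\gamma^{(k,k')}\sim\max(k,k')^{3d}$, converges \emph{only} if $d<4/3$. Thus ``the dependence is at most polynomial, so the double sum converges'' is a non sequitur. The natural repair is the paper's: do not split into pairs, keep the countably many stacks together as one family with finite global $\cZ$, and invoke Theorem~1' a single time.
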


\smallskip
\noindent
{\bf Theorem 2'.} {\it This is obtained from Theorems~\ref{var_geom} in exactly the same way
as Theorem~1' is obtained from Theorem~\ref{thm:weak_conv_compact}, namely by relaxing the condition on $\mu^i$ as stated.}

\medskip
As a matter of fact, instead of just Theorem~\ref{thm:weak_conv}, the following generalization is proved in Section~\ref{sec:cont_proof}.

\medskip
\noindent
{\bf Theorem 4'.} {\it This is a similar extension of Theorem~\ref{thm:weak_conv}, i.e., of the \emph{local} result, to initial measures as stated in Theorem~1'.}

\smallskip

\medskip
We finish by remarking on our use of measured unstable stacks 
and families:
The primary reason for considering these objects is that in the proof we really work
with measured unstable \emph{curves} and their images under $\cF_n$. 
``Thin enough" stacks 
of unstable curves behave in a way very similar to unstable curves, and are treated
similarly. Other generalizations are made so we can include a larger class of
initial distributions; moreover, to the extent that is possible, it is always 
convenient to work with a class of objects closed under the operations of interest. See Remark \ref{summary}.
We note also that  our formulation here has deviated from \cite{ChernovMarkarian_2006} because of (fixable) measurability issues with their 
formulation. 

In view of the fact that our proof really focuses on curves, we will, for pedagogical 
reasons consider separately the following two cases:
\begin{itemize}
\item[(1)] \emph{The countable case}, in which we assume that each initial distribution
$\mu^i$ is supported on a countable family of unstable curves, 
i.e., the stacks above consist of single curves.
\item[(2)] \emph{The continuous case}, where we allow the $\mu^i$ to be
as in Theorem~1'.
\end{itemize}
For clarity of exposition, we first focus on the countable case, presenting
a synopsis of the proof followed by a complete proof; this is carried out in Sections~\ref{sec:outline}--\ref{sec:proof_countable}. 
Extensions to the continuous case is discussed in Section~\ref{sec:completion}.




\section{Theorem~\ref{thm:weak_conv}: synopsis of proof}\label{sec:outline}

This important section contains a sketch of the proof, from beginning to end, of 
the ``countable case" of Theorem~\ref{thm:weak_conv}; it will serve 
as a guide to the supporting technical estimates in the sections to follow.
We have divided the discussion into four parts: Paragraph A contains an overview
of the coupling argument on which the proof is based. The coupling procedure
itself follows closely \cite{ChernovMarkarian_2006}; it is reviewed in 
Paragraphs B and C. Having an outline of the proof in hand permits us to
isolate the issues directly related to the time-dependent setting;
this is discussed in Paragraph D.
As mentioned in the Introduction, one of the goals of this paper is to
stress the (strong) similarities between stationary dynamics and their time-dependent counterparts, and to highlight at the same time the new issues that need to be
addressed. 

For simplicity of notation, we will limit the discussion here to the ``countable case" 
of Theorem~\ref{thm:weak_conv}. 
That is to say, we assume throughout that the initial distributions
$\mu^i, i=1,2$, are proper measures supported on a countable number of 
unstable curves; see Section~\ref{sec:stacks}.


\bigskip
\noindent \textbf{A. Overview of coupling argument}

\medskip
The following scheme is used to produce the exponential
bound in Theorem~\ref{thm:weak_conv}. 
Let $(\cK_n), n \le N \le \infty$  be an admissible (finite or infinite) 
sequence of configurations with associated
composed maps $\cF_n = F_n \circ \dots \circ F_1$. 
Given initial probability distributions $\mu^1$ and $\mu^2$ on $\cM$, 
we will produce two sequences of nonnegative measures $\bar\mu^i_n, \ n\le N$, 
with properties (i)--(iii) below:
\vskip 2mm
\begin{itemize}
\item[(i)] for $i=1,2$, $\mu^i = \sum_{j\leq n} \bar\mu^i_j+\mu^i_n$ with
$\bar\mu^1_n(\cM) = \bar\mu^2_n(\cM)$ for each $n$;
\vskip 1mm
\item[(ii)]
$\mu^1_n(\cM)=\mu^2_n(\cM)\leq Ce^{-a n}$;
\vskip 1mm
\item[(iii)]  $|\int f\circ \cF_{n+m} \,\rd \bar\mu^1_n - \int f\circ \cF_{n+m} \,\rd\bar\mu^2_n|
\leq C_fe^{-a m} \bar\mu^i_n(\cM)$, for any test function $f$.  
\end{itemize}
\vskip 2mm
Here $\bar\mu^i_n, i=1,2$, are the components of $\mu^i$ {\it coupled} at time $n$;
their relationship from time $n$ on is given by (iii). By (ii), the
yet-to-be-coupled part decays exponentially. In practice, a coupling occurs at a sequence of times $0<t_1<t_2<\cdots<t_K<N$. In particular, $\bar\mu^i_j=0$, when $j\neq t_k$ for all $1\leq k\leq K$, which means that $\mu_n^i$ remains unchanged between successive coupling times.

It follows from (i)--(iii) above that
\beq\label{eq:diff}
\begin{split}
\left|\int f \circ \cF_n \,\rd\mu^1 - \int f \circ \cF_n \,\rd\mu^2 \right|
 & \leq  2 \|f\|_\infty \cdot \mu^i_{n/2}(\cM)  + 
\sum_{j \leq n/2} C_f e^{-a (n-j)} \bar\mu^i_j(\cM)\ 
\\
& \leq 2 \|f\|_\infty C e^{-a n/2} + 
C_f e^{-a n/2}.
\end{split}
\eeq

We indicate briefly below how, at time $n$ where $n=t_k$ is a coupling time,
we extract $\bar\mu^i_n$ from~$\mu^i_{n-1}$ and couple 
$\bar\mu^1_n$ to $\bar\mu^2_n$. Recall that in the hypotheses of Theorem~\ref{thm:weak_conv}, 
$(\cK_m)_{m=0}^N$ is adapted to $(\widetilde \cK_q,\tilde\ve(\widetilde \cK_q),\widetilde N(\widetilde \cK_q))_{q=1}^Q$. We assume $\cK_n \in \cN_{\tilde\ve(\widetilde \cK_q)}(\widetilde \cK_q)$ for some $q$. In fact, coupling times are chosen so that
$\cK_m$ is in the same neighborhood for a large number of $m \le n$ leading up to $n$.
For simplicity, we write $\widetilde \cK=\widetilde \cK_q$, and 
$\widetilde F = F_{\widetilde \cK, \widetilde \cK}$.

For the coupling at time $n$, we construct a coupling set $\fS_n\subset \cM$
analogous to the ``magnets" in \cite{ChernovMarkarian_2006} --- except that 
it is a time-dependent object. Specifically, 
$$\fS_n=\cup_{x\in\widetilde W_{n}} W^s_{n}(x)$$ 
where $\widetilde W$ is a piece of unstable manifold of $\widetilde F$
(here we mean unstable manifold of a fixed map in the usual sense 
and not just an ``unstable curve" as defined in Section~\ref{sec:unstable_curves}), 
$\widetilde W_{n} \subset \widetilde W$ is a Cantor subset with
$\fm_{\widetilde W}(\widetilde W_{n})/|\widetilde W|\geq \frac{99}{100}$,
and $W^s_{n}(x)$ is the stable manifold of length $\approx |\widetilde W|$ centered at $x$ 
for the sequence $F_{n+1},F_{n+2},\dots$ (if $N< \infty$, let $\cK_m=\cK_N$ 
for all $m>N$).

It will be shown that at time $n$, the $\cF_n$-image of each of the measures 
$\mu^i_{n-1}, \ i=1,2$, is again the union of countably many regular measures
supported on unstable curves. Temporarily let us denote by
$\widetilde \nu^i_n$ the part of $(\cF_n)_*\mu^i_{n-1}$ 
that is supported on unstable curves each one of which crosses~$\fS_n$ 
in a suitable fashion, meeting every $W^s_{n}(x)$ in particular. 
We then show that there is a lower bound (independent
of $n$) on the fraction of $(\cF_n)_*\mu^i_{n-1}$ that $\widetilde \nu^i_n$ comprises,
and couple a fraction of $\widetilde \nu^1_n$ to $\widetilde \nu^2_n$ 
by matching points that lie on the same local stable manifold.

We comment on our construction of $\fS_n$: Given that $F_m$ is close to 
$\widetilde F$ for many $m$ before $n$, $\cF_n$-images of unstable curves 
will be roughly aligned with unstable manifolds of $\widetilde F$, hence our choice
of~$\widetilde W$. In order to achieve the type of relation in (iii) above, 
we need to have $|\cF_{n+m}(x)-\cF_{n+m}(y)| \to 0$ exponentially in $m$ 
for two points $x$ and $y$ ``matched" in our coupling at time $n$, hence
our choice of $W^s_n$. Observe that  in our setting, 
the ``magnets" $\fS_n$ are necessarily time-dependent. 

\medskip
To further pinpoint what needs to be done, it is necessary to better acquaint ourselves 
with the coupling procedure. For simplicity, we assume in 
Paragraphs B and C below that {\it all the configurations in question lie 
in a small neighborhood $\cN_{\tilde\ve}(\widetilde\cK)$ of a single reference 
configuration~$\widetilde \cK$}. As noted earlier, 
details of this procedure follow \cite{ChernovMarkarian_2006}. 
We review it to set the stage both for the discussion in Paragraph D and
for the technical estimates in the sections to follow.

\bigskip
\noindent \textbf{B. Building block of procedure: 
coupling of two measured unstable curves}

\medskip

We assume in this paragraph that $\mu^i, i=1,2$, is supported on a homogeneous 
unstable curve~$W^i$, and that the following hold at some time~$n \ge 0$:~(a) the image~$\cF_m W^i$ is a homogeneous 
unstable curve for $1 \le m \le n$; (b) the push-forward measure $(\cF_n)_*\mu^i=\nu^i_n$ has a regular density $\rho^i_n$ on~$\cF_n W^i=W^i_n$; and (c) $W^i_n$ crosses the magnet $\fS_n$ ``properly", which means roughly that (i) it meets each stable manifold $W^s_n(x)$, $x\in \widetilde W_{n}$, (ii) the excess pieces sticking ``outside'' of the
magnet~$\fS_n$ are sufficiently long, and (iii) part of~$W^i_n$ is very close to
and nearly perfectly aligned with~$\widetilde W$ (for a precise definition of a proper
crossing, see Definition~\ref{defn:proper_crossing}). 

Due to their regularity, the probability densities $\rho^i_n$ are strictly positive. Moreover, the holonomy map $\bfh^{1,2}_n:W^1_n\cap \fS_n\to W^2_n\cap \fS_n$ has bounds on its Jacobian (Section~\ref{sec:Loc}). Thus, we may extract a fraction~$\bar\nu^i_n$ from each measure 
$\nu^i_n|_{(W^i_n \cap\fS_n)}$ with $(\bfh^{1,2}_n)_*\bar\nu^1_n = \bar\nu^2_n$ and $\bar\nu^1_n(\cM)=\bar\nu^2_n(\cM)=\zeta$ for some $\zeta>0$. Because each $x\in W^1_n\cap\fS_n$ lies on the same stable manifold as $\bfh^{1,2}_n(x)\in W^2_n\cap\fS_n$, 
\beq\label{eq:memloss_curve}
\begin{split}
& \left|\int f\circ \cF_{n+m,n+1} \, \rd\bar\nu^1_n - \int f\circ \cF_{n+m,n+1} \, \rd\bar\nu^2_n\right|
\\
& 
\qquad\qquad\qquad
= \left|\int f\circ \cF_{n+m,n+1} \, \rd\bar\nu^1_n - \int f\circ \cF_{n+m,n+1}\circ \bfh^{1,2}_n \, \rd\bar\nu^1_n\right|
\leq |f|_\gamma (\hat c^{-1}\Lambda^{-m})^{\gamma} \zeta
\end{split}
\eeq
by \eqref{eq:expansion}, for all $\gamma$-H\"older functions $f$ and $m\geq 0$. (We have assumed that the local stable manifolds associated with the holonomy map have lengths $\leq 1$.) The splitting in Paragraph A is given by $\mu^i=\mu^i_n+\bar\mu^i_n$, 
where $(\cF_n)_*\bar\mu^i_n = \bar\nu^i_n$ corresponds to the part coupled at time $n$
and $(\cF_n)_*\mu^i_n=\nu^i_n-\bar\nu^i_n$ to the part that remains uncoupled. For $0\leq m<n$, we set $\bar\mu_m^i=0$ and $\mu_m^i=\mu^i$.

In more detail, it is in fact convenient to couple each measure to a reference measure~$\widetilde\fm_n$ supported on $\widetilde W$: Once two measures are coupled to the same reference measure, they are also coupled to each other. 
Define the uniform probability measure 
\beq\label{eq:ref_meas}
\widetilde\fm_n(\,\cdot\,) = \fm_{\widetilde W}(\,\cdot\,\cap\fS_n)/\fm_{\widetilde W}(\widetilde W\cap \fS_n)
\eeq
on $\widetilde W \cap\fS_n$ and write $\bfh_n^i$ for the holonomy map $\widetilde W \cap\fS_n\to W^i_n \cap\fS_n$. Then $(\bfh_n^i)_*\widetilde\fm_n$ is a probability measure on $W^i_n \cap\fS_n$. We assume that $\bfh = \bfh_n^i$ satisfies 
\beq\label{eq:hol_distort}
|(\cJ \bfh\circ \bfh^{-1})^{-1}-1|\leq \tfrac 1{10}\ .
\eeq
By the regularity of the probability densities $\rho^i_n$, there exists a number $\zeta>0$ such that
\beq\label{eq:cross_lowerbound}
\nu^i_n(W^i_n\cap\fS_n) \geq 2\zeta e^{C_\mathrm{r}}\qquad(i=1,2).
\eeq
Setting $\bar\nu^i_n = \zeta (\bfh_n^i)_*\widetilde\fm_n$, we have 
$\bar\nu^1_n(\cM)=\bar\nu^2_n(\cM)= \zeta.$ Let $\bar\rho^i_n$ be the density
of $\bar\nu^i_n$ (so that it is supported on $W^i_n\cap\fS_n$).
By \eqref{eq:hol_distort} and \eqref{eq:cross_lowerbound}, one checks that
$
\sup \bar\rho^i_n\leq \frac58\cdot\inf_{W^i_n}\rho^i_n,
$
so that what we couple is strictly a fraction of $\nu^i_n$.

In preparation for future couplings, we look at $\nu^i_n-\bar\nu^i_n$,
the images of the uncoupled parts of the measures. First, $W^i_n$ can be expressed as the union of $W^i_n\cap \fS_n$ and $W^i_n\setminus \fS_n$, the latter consisting of countably many gaps ``inside'' the magnet $\fS_n$ and two excess pieces sticking ``outside'' of it. 
Moreover, there is a one-to-one correspondence between the gaps $V^i\subset W^i_n\setminus \fS_n$ and the gaps $\widetilde V\subset \widetilde W\setminus\widetilde W_{n}$.
Notice that $\nu^i_n-\bar\nu^i_n$ has a positive density bounded away from zero 
on the curve~$W_n^i$, but that density is not regular as $\bar\nu^i_n$ is only 
supported on the Cantor set $W^i_n \cap\fS_n$. 
We decompose $\nu^i_n-\bar\nu^i_n$ 
as follows: First we separate the part that lies on the excess pieces of $W^i_n$ ``outside'' the magnet. Let $(W^i_n)'$ denote the curve that remains. 
Viewed as a density on
 $W^i_n \cap\fS_n$, $\bar\rho^i_n$ is regular, since $C_\bfh\leq C_\mathrm{r}$. It can be continued to a regular density on all of $(W^i_n)'$ without affecting its bounds (Lemma~\ref{lem:regular_extension}).
Letting $\check\rho^i_n$ denote this extension,
we have $$(\rho_n^i-\bar\rho_n^i)|_{(W^i_n)'} \ = \ 
(\rho_n^i-\check\rho_n^i) |_{(W^i_n)'} \ + 
\sum_{V^i\subset (W^i_n)'\setminus\fS_n}\mathbf{1}_{V^i}\check\rho_n^i \ , $$
where the sum runs over the gaps $V^i$ in $(W^i_n)'$. 
Notice that each of the densities $\check\rho_n^i |_{V^i}$ on the gaps is regular. While 
$(\rho_n^i-\bar\rho_n^i) |_{(W^i_n)'}$ is in general not regular, 
it is not far from regular because both $\rho_n^i$ and $\check\rho_n^i$ are regular
and $(\rho_n^i-\check\rho_n^i)> \frac38 \rho^i_n$.

\bigskip
\noindent \textbf{C. The general procedure}

\medskip
Still assuming that all $\cK_n$ lie in a small neighborhood $\cN_{\tilde\ve}(\widetilde\cK)$ of a single reference configuration~$\widetilde \cK$, we now consider a proper initial probability measure $\mu=\sum_{\alpha\in\cA}\nu_\alpha$, consisting of countably many regular measures $\nu_\alpha$, each supported on a regular unstable curve $W_\alpha$. 
As explained in Paragraph~B, the problem is reduced to coupling a single initial
distribution to reference measures on $\widetilde W$.
Leaving the determination of suitable coupling times
$t_1<t_2<\cdots$ for later, we first discuss what happens at the first coupling:

\medskip
\noindent{\it The first coupling at time $n=t_1$.} 
Denote by $W_{\alpha,n,i}$ the components of
 $\cF_n W_\alpha$ resulting from its canonical subdivision, where~$i$ runs over an at-most-countable index set. Similarly, the push-forward measure is $(\cF_n)_*\nu_\alpha=\sum_i\nu_{\alpha,n,i}$, where each $\nu_{\alpha,n,i}$ is supported on $W_{\alpha,n,i}$.
  As before, each $\nu_{\alpha,n,i}$ has a regular density~$\rho_{\alpha,n,i}$ on~$W_{\alpha,n,i}$.

For each $\alpha\in\cA$, let $\cI_{\alpha,n}$ be the set of indices $i$ for which $W_{\alpha,n,i}$ crosses $\fS_n$ properly, as discussed earlier. This set is finite, as $|\cF_n W_\alpha|<\infty$ and $|W_{\alpha,n,i}|$ is uniformly bounded from below (by the width of the magnet) for~$i\in\cI_{\alpha,n}$.

Let $\zeta_1\in(0,1)$ be such that
\beq\label{eq:cross_bound}
\sum_{\alpha\in\cA}\sum_{i\in\cI_{\alpha,n}}\nu_{\alpha,n,i}(W_{\alpha,n,i}\cap\fS_n) \geq 2\zeta_1 e^{C_\mathrm{r}}.
\eeq
As in~\eqref{eq:ref_meas}, let $\widetilde\fm_n$ denote the uniform probability measure on $\widetilde W \cap\fS_n$ and write $\bfh_{\alpha,n,i}$ for the holonomy map $\widetilde W \cap\fS_n\to W_{\alpha,n,i} \cap\fS_n$, for each $i\in\cI_{\alpha,n}$. Then $(\bfh_{\alpha,n,i})_*\widetilde\fm_n$ is a probability measure on $W_{\alpha,n,i} \cap\fS_n$ which is regular and nearly uniform. We set
\beq\label{eq:component}
\bar\nu_{\alpha,n,i} = \lambda_{\alpha,n,i}\,(\bfh_{\alpha,n,i})_*\widetilde\fm_n,
\eeq
for each $i\in\cI_{\alpha,n}$, where 
\beqn
\lambda_{\alpha,n,i} = \zeta_1\cdot 
\frac{\nu_{\alpha,n,i}(W_{\alpha,n,i}\cap\fS_n)}
{\sum_{\beta\in\cA}\sum_{j\in\cI_{\beta,n}}\nu_{\beta,n,j}(W_{\beta,n,j}\cap \fS_n)}.
\eeqn
Then
\beqn
\sum_{\alpha\in\cA}\sum_{i\in\cI_{\alpha,n}}\bar\nu_{\alpha,n,i}(\cM)=\sum_{\alpha\in\cA}\sum_{i\in\cI_{\alpha,n}}\lambda_{\alpha,n,i} = \zeta_1 \ .
\eeqn
Moreover, the density~$\bar\rho_{\alpha,n,i}$ of~$\bar\nu_{\alpha,n,i}$ on $W_{\alpha,n,i}\cap\fS_n$ is regular (and in fact nearly constant). As in Paragraph~B, the density can be extended in a regularity preserving way (Lemma~\ref{lem:regular_extension}) to the curve $(W_{\alpha,n,i})'$ obtained from~$W_{\alpha,n,i}$ by cropping the excess pieces outside the magnet. We denote the extension by $\check\rho_{\alpha,n,i}$. 
As before, $(\rho_{\alpha,n,i}-\check\rho_{\alpha,n,i})|_{(W_{\alpha,n,i})'}$ is generally
not regular, and to control it, we record the following bounds:

\begin{lem}\label{lem:rhobar_bound}  For each $\alpha \in \cA$ and $i \in \cI_{\alpha,n}$,
\beq\label{eq:rhobar_bound} 
 \tfrac45 \zeta_1 e^{-C_\mathrm{r}} \cdot \sup_{W_{\alpha,n,i}}\rho_{\alpha,n,i}\leq \inf_{(W_{\alpha,n,i})'}\check\rho_{\alpha,n,i}\leq \sup_{(W_{\alpha,n,i})'}\check\rho_{\alpha,n,i} \leq \tfrac58\cdot \inf_{W_{\alpha,n,i}}\rho_{\alpha,n,i} \ .
\eeq
\end{lem}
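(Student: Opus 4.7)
The plan is to compute $\bar\rho_{\alpha,n,i}$ explicitly from the definition in \eqref{eq:component}, sandwich it pointwise between constant multiples of $\rho_{\alpha,n,i}$ on $W_{\alpha,n,i}\cap\fS_n$, and transfer the resulting bounds onto $(W_{\alpha,n,i})'$ via Lemma~\ref{lem:regular_extension}, which preserves pointwise maximum and minimum. The four ingredients that will do all the work are: the near-uniformity estimate \eqref{eq:hol_distort}, the cross-sum lower bound \eqref{eq:cross_bound}, the trivial total-mass inequality $\sum_{\beta,j}\nu_{\beta,n,j}(W_{\beta,n,j}\cap\fS_n)\le\mu(\cM)=1$ (valid because the $W_{\beta,n,j}$ come from a canonical subdivision and $\mu$ is a probability measure), and the regularity of $\rho_{\alpha,n,i}$, which gives $\sup\rho_{\alpha,n,i}\le e^{C_\mathrm{r}}\inf\rho_{\alpha,n,i}$.

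First I would write $\bar\rho_{\alpha,n,i}(x)$ as $\lambda_{\alpha,n,i}/\fm_{\widetilde W}(\widetilde W\cap\fS_n)$ multiplied by the reciprocal holonomy Jacobian $\bigl((\cJ\bfh_{\alpha,n,i})^{-1}\circ\bfh_{\alpha,n,i}^{-1}\bigr)(x)$, which by \eqref{eq:hol_distort} lies in $[\tfrac{9}{10},\tfrac{11}{10}]$; the same estimate forces $\fm_{W_{\alpha,n,i}}(W_{\alpha,n,i}\cap\fS_n)\in[\tfrac{10}{11},\tfrac{10}{9}]\cdot\fm_{\widetilde W}(\widetilde W\cap\fS_n)$. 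For the upper half of \eqref{eq:rhobar_bound}, I would majorize $\nu_{\alpha,n,i}(W_{\alpha,n,i}\cap\fS_n)\le\sup\rho_{\alpha,n,i}\cdot\fm_{W_{\alpha,n,i}}(W_{\alpha,n,i}\cap\fS_n)$, combine with \eqref{eq:cross_bound} to bound $\lambda_{\alpha,n,i}$ from above, and use regularity once to replace $\sup\rho_{\alpha,n,i}$ by $e^{C_\mathrm{r}}\inf\rho_{\alpha,n,i}$; the $e^{C_\mathrm{r}}$ cancels against the $e^{-C_\mathrm{r}}$ already present, leaving $\sup_{W_{\alpha,n,i}\cap\fS_n}\bar\rho_{\alpha,n,i}\le\tfrac58\inf\rho_{\alpha,n,i}$. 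For the lower half, I would symmetrically minorize $\nu_{\alpha,n,i}(W_{\alpha,n,i}\cap\fS_n)\ge\inf\rho_{\alpha,n,i}\cdot\fm_{W_{\alpha,n,i}}(W_{\alpha,n,i}\cap\fS_n)$, feed the total-mass inequality to obtain $\lambda_{\alpha,n,i}\ge\zeta_1\,\nu_{\alpha,n,i}(W_{\alpha,n,i}\cap\fS_n)$, and then apply regularity in the other direction to reach $\inf_{W_{\alpha,n,i}\cap\fS_n}\bar\rho_{\alpha,n,i}\ge\tfrac45\zeta_1 e^{-C_\mathrm{r}}\sup\rho_{\alpha,n,i}$.

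To conclude, I would note that $\bar\rho_{\alpha,n,i}$ is itself regular on $W_{\alpha,n,i}\cap\fS_n$ in the sense of \eqref{eq:density_reg} --- a direct consequence of \eqref{eq:Jac_regular} and the convention $C_\mathrm{r}\ge C_\bfh$ adopted after Lemma~\ref{lem:pair_image} --- so that Lemma~\ref{lem:regular_extension} applies and yields an extension $\check\rho_{\alpha,n,i}$ with $\min\check\rho_{\alpha,n,i}=\min\bar\rho_{\alpha,n,i}$ and $\max\check\rho_{\alpha,n,i}=\max\bar\rho_{\alpha,n,i}$; the two inequalities then transfer verbatim to $(W_{\alpha,n,i})'$. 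I expect no conceptual obstacle whatever here: the argument is entirely a bookkeeping check that the accumulated $\pm 10\%$ holonomy distortions, combined with the single factor $e^{C_\mathrm{r}}$ of slack produced by regularity and the $e^{C_\mathrm{r}}$ present in \eqref{eq:cross_bound}, compose to numerical ratios ($\tfrac{11}{18}$ and $\tfrac{9}{11}$ respectively) that fall safely inside the explicit thresholds $\tfrac58$ and $\tfrac45$ demanded by the statement. The only modeling choice worth flagging is that these two thresholds were clearly chosen with precisely this arithmetic in mind, which is why the margins are tight but comfortable.
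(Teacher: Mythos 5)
Your proposal is correct and follows essentially the same path as the paper's own proof: write $\bar\rho_{\alpha,n,i}$ via the holonomy Jacobian, bound $\lambda_{\alpha,n,i}$ above by \eqref{eq:cross_bound} and below by the total-mass inequality, trade $\sup$ for $\inf$ of $\rho_{\alpha,n,i}$ via regularity, and transfer through Lemma~\ref{lem:regular_extension}. The only (inconsequential) difference is in the lower bound: the paper coarsens the measure ratio $\fm_{W_{\alpha,n,i}}(W_{\alpha,n,i}\cap\fS_n)/\fm_{\widetilde W}(\widetilde W\cap\fS_n)\ge\tfrac{10}{11}$ to $\tfrac{9}{10}$, landing at $\bigl(\tfrac{9}{10}\bigr)^2=\tfrac{81}{100}$, whereas you keep $\tfrac{10}{11}$ and land at $\tfrac{9}{10}\cdot\tfrac{10}{11}=\tfrac{9}{11}$; both exceed the stated $\tfrac45$.
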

\begin{proof}
We begin by observing that the density of $(\bfh_{\alpha,n,i})_*\widetilde\fm_n$ on $W_{\alpha,n,i}\cap\fS_n$ has the expression $(\fm_{\widetilde W}(\widetilde W\cap \fS_n)\,\cJ \bfh_{\alpha,n,i}\circ (\bfh_{\alpha,n,i})^{-1})^{-1}$ and that the supremum of a regular density is bounded by~$e^{C_\mathrm{r}}$ times its infimum. By~\eqref{eq:hol_distort} and~\eqref{eq:cross_bound}, the third inequality in \eqref{eq:rhobar_bound} follows easily. Coming to the first inequality in \eqref{eq:rhobar_bound}, it is certainly the case that
\beqn
\sum_{\beta\in\cA}\sum_{j\in\cI_{\beta,n}}\nu_{\beta,n,j}(W_{\beta,n,j}\cap \fS_n) \leq \mu(\cM) \leq 1.
\eeqn
As the density $\rho_{\alpha,n,i}$ is regular,
\beqn
\begin{split}
\lambda_{\alpha,n,i}  &\geq \zeta\cdot \nu_{\alpha,n,i}(W_{\alpha,n,i}\cap\fS_n)
\geq \zeta e^{-C_\mathrm{r}} \cdot \sup_{W_{\alpha,n,i}}\rho_{\alpha,n,i}\cdot \fm_{W_{\alpha,n,i}}(W_{\alpha,n,i}\cap\fS_n).
\end{split}
\eeqn
Again by \eqref{eq:hol_distort},
\beqn
\begin{split}
\inf_{(W_{\alpha,n,i})'}\check\rho_{\alpha,n,i} 
&= \lambda_{\alpha,n,i} \cdot \inf_{W_{\alpha,n,i}\cap\fS_n} (\fm_{\widetilde W}(\widetilde W\cap \fS_n)\,\cJ \bfh_{\alpha,n,i}\circ (\bfh_{\alpha,n,i})^{-1})^{-1}
\geq
\tfrac9{10}\frac{\lambda_{\alpha,n,i}}{\fm_{\widetilde W}(\widetilde W\cap \fS_n)}
\\
&\geq
\tfrac9{10}\zeta e^{-C_\mathrm{r}} \cdot \frac{\fm_{W_{\alpha,n,i}}(W_{\alpha,n,i}\cap\fS_n)}{\fm_{\widetilde W}(\widetilde W\cap \fS_n)} \cdot \sup_{W_{\alpha,n,i}}\rho_{\alpha,n,i}
\geq
\bigl (\tfrac9{10} \bigr)^2 \zeta e^{-C_\mathrm{r}} \cdot \sup_{W_{\alpha,n,i}}\rho_{\alpha,n,i}\ . 
\end{split}
\eeqn
This finishes the proof.
\end{proof}

To recapitulate, in the language of Paragraph~A, we have split $\mu$ into $\bar\mu_n+\mu_n$ with
\beq\label{eq:components}
(\cF_n)_*\bar\mu_n = \sum_{\alpha\in\cA}\sum_{i\in\cI_{\alpha,n}} \bar\nu_{\alpha,n,i}\ .
\eeq
The measures $(\cF_n)_*\bar\mu_n$ and $\zeta_1\, \widetilde\fm_n$ are coupled.

\medskip
\noindent{\it Going forward.} To proceed inductively, we need to discuss the uncoupled part $\mu_n$ (for $n=t_1$), which has the form
$$
(\cF_n)_*\mu_n = \sum_{\alpha\in\cA}\sum_{i\notin\cI_{\alpha,n}} \nu_{\alpha,n,i} + \sum_{\alpha\in\cA}\sum_{i\in\cI_{\alpha,n}} (\nu_{\alpha,n,i}-\bar\nu_{\alpha,n,i}).
$$
The measures $\nu_{\alpha,n,i}$ in the first term above are regular, so we leave them alone. The measures $\nu_{\alpha,n,i}-\bar\nu_{\alpha,n,i}$ in the second term are further subdivided as in Paragraph~B, into the regular densities on the excess pieces, $\check\rho_{\alpha,n,i}\mathbf{1}_V$ on the gaps $V\subset (W_{\alpha,n,i})'\setminus\fS_n$ of the Cantor sets~$\fS_n\cap W_{\alpha,n,i}$, and 
$(\rho_{\alpha,n,i}-\check\rho_{\alpha,n,i})|_{(W_{\alpha,n,i})'}$, which are in general
not quite regular. Because of the arbitrarily small gaps in $(W_{\alpha,n,i})'\setminus\fS_n$,
the resulting family is \emph{not} proper.

We allow for a recovery period of $r_1>0$ time steps during which canonical
subdivisions continue but no coupling takes place. The purpose of this period is to
allow the regularity of densities of the type $(\rho_{\alpha,n,i}-\check\rho_{\alpha,n,i})|_{(W_{\alpha,n,i})'}$ to be restored, and short curves to become longer on average
(as a result of the hyperbolicity). 
Because of the arbitrarily short gaps,
a fraction of the measure will not recover sufficiently to become proper
no matter how long we wait, but this fraction decreases exponentially with time.
Specifically, for all sufficiently large $m$, the $m$-step push-forward $(\cF_{t_1+m})_*\mu_{t_1}$ of the uncoupled measure $(\cF_{t_1})_*\mu_{t_1}$ can be split into the sum of two measures $\mu_{{t_1},m}^\mathrm{P}$ and $\mu_{{t_1},m}^\mathrm{G}$, both consisting of countably many regular measured unstable curves, such that $\mu_{{t_1},m}^\mathrm{P}$ is a \emph{proper} measure and $\mu_{{t_1},m}^\mathrm{G}(\cM) = C_1 \lambda_1^m$ for some $C_1\geq 1$ and $\lambda_1\in(0,1)$. Choosing $r_1$ large enough, $\mu_{{t_1},r_1}^\mathrm{G}(\cM)$ is thus as small as we wish.  

At time $t_1+r_1$ we are left with a proper measure $\mu_{t_1,r_1}^\mathrm{P}$ 
having total mass $1-\zeta_1-C_1\lambda_1^{r_1}$, and another measure 
$\mu_{t_1,r_1}^\mathrm{G}$ supported on a countable union of short curves. 
We consider $\mu_{t_1,r_1}^\mathrm{P}$, and assume 
that after \mbox{$s_1>0$} steps a sufficiently large fraction of the 
push-forward of this measure crosses the magnet ``properly". 
At time $t_2 = t_1+r_1+s_1$, we perform another coupling in the same fashion as
the one performed at time $t_1$, this time coupling a $\zeta_2$-fraction of 
$(\cF_{t_2,t_1})_*\mu_{t_1,r_1}^\mathrm{P}$ to the measure 
$\zeta_2(1-\zeta_1-C_1\lambda_1^{r_1})\, \widetilde \fm_{t_2}$.

The cycle is repeated: Following a recovery period of length $r_2$, i.e.,
at time $t_2+r_2$, the measure of mass $(1-\zeta_2)(1-\zeta_1-C_1\lambda_1^{r_1})$ left from the second coupling can be split into a proper part $\mu^\mathrm{P}_{t_2,r_2}$ 
and a non-proper part $\mu^\mathrm{G}_{t_2,r_2}$, the latter
having mass $C_2\lambda_2^{r_2}(1-\zeta_1-C_1\lambda_1^{r_1})$. At the same time, most of
$\mu_{{t_1},r_1}^\mathrm{G}$ has now become proper: the fraction of
$\mu_{{t_1},r_1}^\mathrm{G}$ that still has not recovered at time $t_2+r_2$ has
mass $C_1\lambda_1^{r_2+(t_2-t_1)}$. We wait another $s_2$ steps, 
until time $t_3=t_2+r_2+s_2$, for a sufficiently large fraction of the push-forward
measure to cross the magnet properly. At time $t_3$, we couple
a $\zeta_3$-fraction of $(\cF_{t_3,t_2})_*\mu_{t_2,r_2}^\mathrm{P}$ plus 
the $\cF_{t_3,t_1}$-image of the part of $\mu_{{t_1},r_1}^\mathrm{G}$ that
has recovered,
to a measure on $\widetilde W$, and so on.

\medskip
{\it Our main challenge is to prove that the estimates above are uniform}, i.e., there exist $C\geq 1$,
$\zeta, \lambda \in (0,1)$ and $r, s \in \mathbb Z^+$, independently of the sequence
$(\cK_n)$ provided each $\cK_n \in \cN_{\tilde\ve}(\widetilde\cK)$, so that
the scheme above can be carried out with
$C_i = C$, $\zeta_i=\zeta, r_i=r, \lambda_i=\lambda$ and $s_i=s$ for all~$i$. 
Assuming these uniform estimates, 
the situation for $\cK_n \in \cN_{\tilde\ve}(\widetilde\cK)$, all $n$, can
be summarized as follows:

\medskip
\noindent \emph{Summary.} We push forward the initial distribution,
performing couplings with the aid of a time-dependent ``magnet" 
at times $t_1<t_2 < \dots$, and performing
canonical subdivisions (for connectedness and distortion control) in between.
The $t_k$'s are $r+s$ steps apart, with $t_1$ depending additionally on
the initial distribution $\mu$.
At each coupling time $t_k$, a $\zeta$-fraction of the uncoupled measure that is proper
is coupled. At the same time, a small fraction of the still uncoupled measure
becomes non-proper due to the small gaps in the magnet. This non-proper part regains ``properness" thereby returning
to circulation exponentially fast, the exceptional set constituting a fraction 
$C\lambda^m$ after $m$ steps. Simple arithmetic 
shows that by such a scheme,
the yet-to-be coupled part of $\mu$ has exponentially small mass.
This implies exponential memory loss.

\bigskip
\noindent \textbf{D. What makes the proof work in the time-dependent case}

\medskip
We now return to the full setting of Theorem~\ref{thm:weak_conv}, where we are handed a sequence
$(\cK_n)_{n=0}^N$ adapted to $(\widetilde \cK_q,\tilde\ve(\widetilde \cK_q),\widetilde N(\widetilde \cK_q))_{q=1}^Q$. Exponential memory loss of this sequence must necessarily
come from the corresponding property for $\widetilde F_q = 
F_{\widetilde \cK_q, \widetilde \cK_q}$. The question is: how does the exponential
mixing property of a system pass to compositions of nearby systems? 
Such a result cannot be taken for granted,
for in general mixing involves sets of all sizes, and smaller sets naturally take longer
to mix, while two systems that are a positive
distance apart will have trajectories that follow each other \emph{up to finite precision
for finite times}.  That is to say, 
once the neighborhood is fixed, perturbative arguments are
not effective for treating arbitrarily small scales. 
These comments apply to iterations of fixed maps as well 
as time-dependent sequences.

What is special about our situation is that there is a characteristic length $\ell$ 
to which images of all unstable curves tend to grow exponentially fast under $F^n$,
for all $F=F_{\cK,\cK}, \cK \in \IK$, before they get cut --- with the exception of exponentially 
small sets (see Lemma~\ref{lem:growth}). The presence of such a characteristic
length suggests that to prove exponential mixing, it may suffice to consider 
rectangles aligned with stable and unstable directions that are $\ge \ell$ in size,
and to treat separately growth properties starting from arbitrarily small length scales. 
These ideas have been used successfully to prove exponential correlation decay for 
classical billiards, and will be used here as well.\footnote{The ideas alluded to here
are applicable to large classes of dynamical systems with some hyperbolic properties
including but not limited to
billiards; they were enunciated in some generality in \cite{Young_1998}, which also
proved exponential correlation decay for the periodic Lorentz gas.}

\medskip
To carry out the program outlined in Paragraphs A--C, we need to prove that for each
$\widetilde \cK_q$, the following holds, with uniform bounds, for all 
$(\cK_n)$ in a sufficiently small neighborhood of $\widetilde \cK_q$:

\medskip \noindent
(1) {\it Uniform mixing on finite scales.} We will show that there is a uniform lower 
bound on the speeds of mixing for rectangles of sides $\ge \ell$ for the time-dependent
maps defined by $(\cK_n)$. For $\widetilde F_q = F_{\widetilde \cK_q,\widetilde \cK_q}$, this is proved in \cite{BSC_1990,BSC_1991}, and what we prove here is effectively
a perturbative version for time-dependent sequences in a small enough neighborhood of
$\widetilde F_q$. Such a result is 
feasible because it involves only finite-size objects for finite times. Caution must be
exercised still, as the maps involved are discontinuous. 
This result gives the $s=s(\widetilde \cK_q)$ asserted in  Paragraph C.

\medskip \noindent
(2) {\it Uniform structure of magnets.} To ensure that a definite fraction of measure is
coupled when a measured unstable curve crosses the magnet, a uniform lower bound
on the density 
of local stable manifolds in $\fS_n$ is essential: we require 
$\fm_{\widetilde W}(\widetilde W_{n})/|\widetilde W|\geq \frac{99}{100}$;
see Paragraph A. In fact, we need more than just a minimum fraction: uniformity in the  
distribution of small gaps in $\fS_n$ is also needed. Following a coupling,
they determine how far from being proper the uncoupled part of the measure is; 
see Paragraph C. As $\fS_n$, the magnet used for coupling at time $n$,
 is constructed using the local stable manifolds of 
$F_n, F_{n+1}, \dots$, the results above must hold uniformly for all relevant sequences.

\medskip \noindent
(3) {\it Uniform growth of unstable curves.} This very important fact, which takes into
consideration both the expansion due to hyperbolicity of the map and the cutting by
discontinuities and homogeneity lines, is used in more ways than one: It is used
to ensure that regularity of densities is restored and most of the uncoupled measure becomes proper at the end of the ``recovery periods". The uniform 
$r$ and $\lambda$ asserted in Paragraph C are obtained largely from
the uniform structure of magnets, i.e., item (2) above, together with the growth results
in Section~\ref{sec:preliminariesII} (as well as inductive control from previous steps). 
Growth results are also used to
produce a large enough fraction of sufficiently long unstable curves at 
times $t_k+r$. That together with the uniform mixing in item~(1) permits us 
to guarantee the coupling of a fixed fraction $\zeta$ at time $t_{k+1}$. 

\medskip
Item (1) above is purely perturbative as we have discussed; item (2) is partially perturbative: 
proximity to $\widetilde \cK_q$ is used to ensure that $\fS_n$ has some of 
the desired properties. Item (3) is strictly nonperturbative: we do not derive the properties in question from
the proximity of the composed sequence $F_n \circ \cdots \circ F_2 \circ F_1$ 
to $\widetilde F_q^n$. Instead, we show that these properties hold true, with
uniform bounds, for all sequences $(\cK_n)$ with $\cK_n \in \IK$.
In the case of genuinely moving scatterers, the constants $r,C$ and~$\zeta$ all depend on the relevant reference configuration~$\widetilde \cK_q$, through the curve~$\widetilde W$
in whose neighborhood the couplings occur. A priori, the same is true of $\lambda$, 
although, as we will show, $\lambda$ is in fact independent of $\widetilde\cK_q$.


\section{Main ingredients in the proof of Theorem~\ref{thm:weak_conv}}\label{sec:preliminaries_continued}

We continue to develop the main ideas needed in the proof
of Theorem~\ref{thm:weak_conv}, focusing first on the countable case and
addressing issues that have been raised in the synopsis in the last
section. As in Sections~\ref{sec:preliminariesI} and~\ref{sec:preliminariesII}, all configuration pairs whose billiard maps are
discussed are assumed to be admissible and in $\IK$. Further 
conditions on $(\cK_n)$, such as close proximity to a reference configuration,  
will be stated explicitly. Many of the results below are parallel to known results
for classical billiards; see e.g. \cite{ChernovMarkarian_2006}.

\subsection{Local stable manifolds}\label{sec:LSM}
Given $(\cK_n)_{n = 0}^\infty$, we let $W^s(x)$ denote the maximal (possibly empty, homogeneous) local stable manifold passing through the point $x\in\cM$ for the sequence 
of maps $(F_n)_{n\geq 1}$. Recall that $W^s(x)$ has positive length if and only if the trajectory $\cF_n x$ does not approach the ``bad set'' $\partial\cM\cup\cup_{|k|\geq k_0}\partial\IH_k$ too fast as a function of $n$. 
Based on this fact, the size of local stable manifolds may be quantified as follows: Let $r^s(x)$ denote the distance of $x$ from the nearest endpoint of $W^s(x)$ as measured along $W^s(x)$. A standard computation, which we omit, shows that for an arbitrary unstable curve $W$ through $x$,
\beq\label{eq:stable_length}
r^s \geq \widetilde C^{-1} \inf_{n\geq 0} \Lambda^n\, r_{W,n} \equiv u_{W}^s,
\eeq
where $\widetilde C>0$ is a uniform constant and $r_{W,n}$ was introduced in the beginning of Section~\ref{sec:growth}. 

In Paragraph D, item (2), of the Synopsis, we identified the need for certain
uniform properties of local stable manifolds, such as the density of stable
manifolds of uniform length on unstable curves. The next lemma provides
a basic result in this direction.

\begin{lem}\label{lem:fundamental} Given $a>0$ and $A>0$, there exist 
$s' \in \mathbb Z^+$ and $L>0$ such that for any $(\cK_n)_{n=0}^\infty$,
every unstable curve $\widetilde W$ has the property 
\beq\label{eq:fundamental}
\fm_{\widetilde W}\bigl\{u_{W}^s\geq A\bigl |\widetilde W\bigr |\bigr\}\geq (1-a)
|\widetilde W|\ 
\eeq
provided (i) $\widetilde W$ is located in the middle third of a homogeneous 
unstable curve $W$ for which $\cF_{s'}W$ has a single homogeneous component,
and (ii) $|\widetilde W| \le L |W|/3$.
\end{lem}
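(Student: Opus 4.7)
The plan is to control the bad set
\[
B := \bigl\{x \in \widetilde W : u_W^s(x) < A|\widetilde W|\bigr\} = \bigcup_{n\geq 0}\bigl\{x \in \widetilde W : \Lambda^n r_{W,n}(x) < \widetilde C A |\widetilde W|\bigr\}
\]
by splitting the union at $n = s'$ and handling the two pieces separately: an elementary expansion argument will eliminate the short-time part, while the Growth lemma (Lemma~\ref{lem:growth}) will absorb the long-time tail.

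For the short-time regime $0 \le n \le s'$, I would use the hypothesis that $\cF_{s'}W$ consists of a single homogeneous component, which forces $\cF_n W$ to be a single homogeneous unstable curve for every such $n$; in particular, $r_{W,n}(x)$ equals the full distance along $\cF_n W$ from $\cF_n x$ to $\partial(\cF_n W)$. Because $\widetilde W$ lies in the middle third of $W$, every $x \in \widetilde W$ is at distance at least $|W|/3$ from $\partial W$ along $W$; integrating the expansion bound \eqref{eq:expansion} along $W$ then yields $r_{W,n}(x) \ge \hat c \Lambda^n |W|/3$, so $\Lambda^n r_{W,n}(x) \ge \hat c |W|/3$. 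Choosing $L \le \hat c/(\widetilde C A)$, the hypothesis $|\widetilde W| \le L|W|/3$ forces this bound to be at least $\widetilde C A |\widetilde W|$, and thus no point of $\widetilde W$ enters $B$ from the $n \le s'$ piece, regardless of the eventual choice of $s'$.

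For the long-time regime $n > s'$, I would apply Lemma~\ref{lem:growth} to $W$ (of which $\widetilde W$ is a subset) with $\ve = \widetilde C A |\widetilde W|\Lambda^{-n}$, obtaining
\[
\fm_{\widetilde W}\bigl\{\Lambda^n r_{W,n} < \widetilde C A |\widetilde W|\bigr\} \le C_\mathrm{gr}(\vartheta^n + |W|)\,\widetilde C A |\widetilde W| \,\Lambda^{-n}.
\]
Unstable curves in $\cM$ have uniformly bounded length, and both $\vartheta\Lambda^{-1}$ and $\Lambda^{-1}$ lie in $(0,1)$, so summing over $n > s'$ gives a tail bound of the form $C_\star \Lambda^{-s'} A |\widetilde W|$ for a constant $C_\star$ depending only on $\IK$. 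Taking $s'$ large enough that $C_\star \Lambda^{-s'} A \le a$ then yields $\fm_{\widetilde W}(B) \le a|\widetilde W|$, which is \eqref{eq:fundamental}.

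The argument is essentially bookkeeping, and I do not expect a substantial obstacle. The only delicate point to watch is that the Growth and distortion machinery require homogeneous iterates, and this is exactly what the ``single homogeneous component of $\cF_{s'}W$'' hypothesis buys us for the short-time window, while Lemma~\ref{lem:growth} already handles cutting by discontinuities and homogeneity lines in its statement for the long-time window. Since every constant involved ($\widetilde C,\hat c, C_\mathrm{gr}, \vartheta, \Lambda$, and the uniform diameter bound on unstable curves) depends only on $\IK$, the resulting $s'$ and $L$ depend only on $a$, $A$ and $\IK$, as required.
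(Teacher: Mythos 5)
Your proof is correct and follows essentially the same approach as the paper: split the union at $n = s'$, kill the short-time piece using the expansion bound and the single-homogeneous-component hypothesis with the choice $L \leq \hat c/(\widetilde C A)$, and sum the Growth-Lemma tail geometrically to fix $s'$. The only cosmetic difference is that the paper applies the Growth Lemma to $\widetilde W$ (via the observation $r_{W,n} \geq r_{\widetilde W,n}$ on $\widetilde W$), obtaining the factor $(\vartheta^n + |\widetilde W|)$, whereas you apply it directly to $W$ and then bound $|W|$ by the uniform maximum length of a homogeneous unstable curve; both yield a tail of order $\Lambda^{-s'}|\widetilde W|$ and the same conclusion.
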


\begin{proof}
Since $\cF_{s'} W$ consists of a single homogeneous component, we have 
\begin{align*}
& r_{W,n}(x) \geq \hat c\Lambda^n r_{W,0}(x) \qquad \forall\, x\in W,\,0\leq n\leq s',
\\
& r_{W,n}(x) \geq r_{\widetilde W,n}(x) \qquad  \forall\, x\in\widetilde W,\, 0\leq n\leq s',
\\
& r_{W,0}(x) \geq  |W|/3 + r_{\widetilde W,0}(x) \qquad \forall\,x\in\widetilde W.
\end{align*}
Using these facts and the Growth Lemma, we can estimate that
\beqn
\begin{split}
& \fm_{\widetilde W}\{u^s_W<A|\widetilde W|\} \leq \sum_{n\geq 0} \fm_{\widetilde W}\{r_{W,n}<\widetilde CA|\widetilde W| \Lambda^{-n}\} 
\\
& \qquad\qquad\quad \leq \sum_{n\leq s'} \fm_{\widetilde W}\{r_{W,0}<\hat c^{-1}\widetilde CA|\widetilde W| \Lambda^{-2n}\} + \sum_{n> s'} \fm_{\widetilde W}\{r_{\widetilde W,n}<\widetilde CA|\widetilde W| \Lambda^{-n}\}
\\
& \qquad\qquad\quad \leq \sum_{n\leq s'} \fm_{\widetilde W}\{r_{\widetilde W,0}<\hat c^{-1}\widetilde CA|\widetilde W| \Lambda^{-2n}- |W|/3\} + \sum_{n> s'} C_\mathrm{gr}(\vartheta^n+|\widetilde W|)\widetilde CA|\widetilde W| \Lambda^{-n}\ .
\end{split}
\eeqn
In the last line, the first sum vanishes if we take $L \le \hat c\widetilde C^{-1}A^{-1}$ and the Growth Lemma yields the bound on the second sum. The second sum is then $< a |\widetilde W|$ if~$s'$ is so large that
 $\frac{C_\mathrm{gr}(1+LL_0/3)\widetilde CA}{(\Lambda-1)\Lambda^{s'}}\leq a$,
 where $L_0$ here is the maximum length of a homogeneous unstable curve. 
\end{proof}


\subsection{Magnets}\label{sec:magnets}

We now define more precisely the objects $\fS_n$ in Paragraph A of the Synopsis.
Recall that $\fS_n$ is constructed using stable manifolds $W^s_n(x)$ with respect to
 the sequence of maps $F_n, F_{n+1}, F_{n+2}, \dots$. 
 When what happens before time $n$ is irrelevant
to the topic under discussion, it is simpler notationally to set $n=0$ (by shifting and
renaming indices in the original sequence). That is what we will do here as well as
in the next few subsections.

We fix a reference configuration $\widetilde \cK \in \IK$,  and denote
$\widetilde F = F_{\widetilde \cK, \widetilde \cK}$.
Let  $s'$ and $L$ be given by Lemma~\ref{lem:fundamental} with
$A=\frac12$ and $a=0.01$. We pick a piece of unstable manifold $\widetilde W_+^u$ 
of $\widetilde F$ (more than just an unstable curve) with
the property that $\widetilde W_+^u$ is homogeneous and 
$\widetilde F^{s'}\widetilde W_+^u$ has a single homogeneous component.
Let  $\widetilde W^u \subset \widetilde W_+^u$ be the subsegment of 
$\widetilde W_+^u$ half as long and located at the center.
Then there exists $\ve'>0$ such that $\cF_{s'}\widetilde W^u$ has a single 
homogeneous component for any $\cK_n \in \cN_{\ve'}(\widetilde \cK), \ 1 \le n \le s'$.
Let $\widetilde W \subset \widetilde W^u$ be located at the center of 
$\widetilde W^u$ with $|\widetilde W| = L |\widetilde W^u|/3$. 
Lemma~\ref{lem:fundamental} then tells us that for $(\cK_n)$ as above and
$\widetilde W_{0}:=\{x\in\widetilde W\,:\,u_{\widetilde W^u}^s(x)\geq  |\widetilde W|/2 \}$,
we are guaranteed that $\fm_{\widetilde W}(\widetilde W_{0})/|\widetilde W|  \geq \frac{99}{100}$.
The set
$
\fS_0=\cup_{x\in\widetilde W_0} W^s(x) 
$
is the magnet defined by $\widetilde W$ and the sequence $(\cK_n)$.

Additional upper bounds will be imposed on $|\widetilde W|$ to obtain the magnet 
used in the proof of Theorem~\ref{thm:weak_conv}. The size of the neighborhood
$\cN_{\ve'}(\widetilde \cK)$ will also be shrunk a finite number of times as we go along.

Now let $W$ be any unstable curve that crosses $\fS_0$ completely, in the sense that it meets $W^s(x)$ for each $x\in\widetilde W_{0}$ with excess pieces on both sides, and let $\bfh$ 
denote the holonomy map from $\widetilde W \cap\fS_0\to W\cap\fS_0$. 

\begin{defn}\label{defn:proper_crossing}
We say the crossing is \emph{proper} if for a uniform constant $\aleph>0$ to be determined, the following
hold: (i) $W$ is regular, (ii) the distance between any $x\in\widetilde W_{0}$ and $\bfh(x)$ as measured along~$W^s(x)$ is less than 
$\aleph |\widetilde W|$, and (iii) each of the two excess pieces ``outside'' the magnet is
more than $|\widetilde W|$ units long. 
\end{defn}

We need $\aleph$ to be small enough that ~\eqref{eq:hol_distort}, i.e.,
$|(\cJ \bfh\circ \bfh^{-1})^{-1}-1|\leq \tfrac 1{10}$, is guaranteed in
 proper crossings. To guarantee ~\eqref{eq:hol_distort},
we need, by \eqref{eq:Jac_bound}, both (i) the distance between $x$ and $\bfh(x)$ and 
(ii) the difference between the slopes of $\widetilde W_{0}$ and $\widetilde W$ 
at $x$ and $\bfh(x)$ respectively, to be small. 
(i) is bounded by $\aleph |\widetilde W|$. Observe that (ii) is also (indirectly) controlled
by $\aleph$: since both $W$ and $\widetilde W$ are regular curves (real unstable 
manifolds of $\widetilde F$ are automatically regular), there is a fixed upper bound
on their curvatures. Thus the shorter the curves, the closer they are to straight lines.
Now since $W$ meets the stable manifolds at the two ends of $\widetilde W$
at distances $< \aleph |\widetilde W|$ from $\widetilde W$, taking $\aleph$ small
forces the slopes of $W$ and $\widetilde W$ to be close. 
Further upper bounds on $\aleph$ may be imposed later.

\medskip \noindent
\emph{Note on terminology.}
In the discussion to follow, the setting above is assumed, and 
a number of constants referred to as ``uniform constants" will be introduced. 
This refers to constants that are independent of 
$\widetilde \cK$ for as long as $\widetilde \cK\in\IK$, and they are independent
of $(\cK_n)$, $\widetilde W^u$, $\widetilde W$ or $\widetilde W_0$
provided these objects are chosen according to the recipe above.

\subsection{Gap control}\label{sec:gaps} We discuss here the distribution of 
gap sizes of the magnet, issues about which were raised in the Synopsis. 
The setting, including $\fS_0$, is as in Section~\ref{sec:magnets}.

Recall that a point $x\in \widetilde W$ belongs to the Cantor set $\widetilde W_0$ if and only if $r_{{\widetilde W^u},k}(x)\geq \widetilde C \Lambda^{-k}|\widetilde W|/2$ for every $k\geq 0$. We define the {\it rank} of a gap $\widetilde V$ in $\widetilde W \setminus \widetilde W_0$ to be the smallest $R$ such that $r_{{\widetilde W^u},R}(x)<\widetilde C \Lambda^{-R}|\widetilde W|/2$ holds for some $x\in\widetilde V$. Observe that $R$ so defined is also the smallest number
for which $\cF_R \widetilde V$ meets the ``bad set'' $\partial\cM\cup\cup_{|k|\geq k_0}\partial\IH_k$: Clearly, $\cF_k \widetilde V$ could not have met the bad set for
$k<R$. On the other hand, $\cF_R \widetilde V$ must meet the bad set, or the 
minimum of $r_{{\widetilde W^u},R}$ on $\widetilde V$ would occur on one of its 
end points, which cannot happen for a gap (excess pieces are not gaps). 
Notice that this implies
that $\cF_{R-1} \widetilde V$ must cross (transversally) $F_R^{-1}\partial\IH_k$
for some $k$, and if it crosses $F_R^{-1}\partial M$, then it automatically crosses 
$F_R^{-1}\partial\IH_k$ for infinitely many $k$. 

Consider next an unstable curve $W$ that crosses $\fS_0$ properly, and let
$W_0 = W\cap\fS_0$. 
Each gap $V\subset W\setminus W_{0}$ corresponds canonically to a unique gap $\widetilde V\subset \widetilde W\setminus \widetilde W_{0}$, as their corresponding end points are connected by local stable manifolds $\gamma^s_1$ and $\gamma^s_2$
in $\fS_0$. We define the {\it rank} of
$V$ to be that of $\widetilde V$, and claim that 
the rank of $V$ is also the first time $\cF_R V$ meets the bad set. 
To see this, consider the $\cF_n$-images of the region bounded by $V, \widetilde V,
\gamma^s_1$ and $\gamma^s_2$. Since $\cF_n(\gamma^s_i)$
avoid the bad set (which consists of horizontal lines), 
it follows that for each $n$, $\cF_n(V)$ crosses the bad set 
if and only if $\cF_n(\widetilde V)$ does.

Let $W$ be as above. For $b>0$, we consider the {\it dynamically defined} Cantor set
\begin{equation} \label{cantor}
W^b_0 = \{x \in W' \,: \,r_{W,k} \ge b \Lambda^{-k} |\widetilde W| \,\,\, \forall \,k \ge 0\}.
\end{equation}
For $W = \widetilde W^u$ and $b=\frac12 \widetilde C$, $W^b_0 = \widetilde W_0$. 
We observe that, for $b\leq \hat c$, with $\hat c$ as in~\eqref{eq:expansion}, the definition of the set $W_0^b$ does not depend on the part $W\setminus W'$ ``outside'' the magnet. This is  because of the length of each of the two components of $W\setminus W'$ and the expansion in~\eqref{eq:expansion}. Like $\widetilde W_0$, $W^b_0$ is a Cantor set, and the ranks of the gaps of $W\setminus W_0^b$ have the same characterizations as the gaps of 
$\widetilde W^u\setminus \widetilde W_0$. 

The proofs below are a little sketchy, as there are no new issues in the time-dependent case.

\begin{lem}\label{lem:dyn_Cantor}
There exists a uniform constant $\bar b\leq \hat c$ such that the following hold for
$\aleph$ small enough: Let $W$ be an arbitrary unstable curve  crossing $\fS_0$ properly, and let $W_0 = W\cap\fS_0$. Then

(i) $W_0\subset W^{\bar b}_0$, and

(ii) through every point of $W^{\bar b}_0$ there is a local stable manifold which meets $\widetilde W$.
\end{lem}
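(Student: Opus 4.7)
The plan is to treat (i) and (ii) as separate arguments. Part (i) is the substantive one: it calls for identifying the gap structure of $W\setminus W_0$ with that of $\widetilde W\setminus\widetilde W_0$ through the stable holonomy, and then for comparing the distances from $\cF_k x$ and $\cF_k\tilde x$ to their adjacent cuts along $\cF_k W$ and $\cF_k\widetilde W^u$ respectively. Part (ii) is comparatively short and amounts to a length comparison via \eqref{eq:stable_length} combined with the geometry of a proper crossing.

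For (i), I first set up a canonical bijection between gaps of $W\setminus W_0$ and gaps of $\widetilde W\setminus \widetilde W_0$ induced by the stable manifolds of $\fS_0$. The central claim is that corresponding gaps have the same rank. To see this, enclose a matched pair of gaps by a ``quadrilateral'' whose lateral sides are two local stable manifolds in $\fS_0$ and whose top and bottom are the corresponding segments of $W$ and $\widetilde W$; since the lateral sides lie in $\fS_0$, their entire forward orbits avoid $\partial\cM\cup\bigcup_{|k|\ge k_0}\partial\IH_k$, so $\cF_m$ acts as a homeomorphism on the quadrilateral for every $m$ strictly smaller than the first step at which any part of its top or bottom meets the bad set, and that first step must coincide for the two sides. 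With this bijection in hand, fix $x\in W_0$ and its holonomy partner $\tilde x\in\widetilde W_0$; the two cuts of $\cF_k W$ adjacent to $\cF_k x$ arise from the nearest gaps of $W$ of rank $\le k$ on either side of $x$, and these correspond to the nearest gaps of $\widetilde W$ of rank $\le k$ on either side of $\tilde x$, i.e., to the cuts that delimit the homogeneous component of $\cF_k\widetilde W^u$ through $\cF_k\tilde x$. A combination of the distortion bound of Lemma~\ref{lem:distortion} with the Jacobian bound for the stable holonomy of Lemma~\ref{lem:Jac_holonomy} then yields $r_{W,k}(x)\ge c\,r_{\widetilde W^u,k}(\tilde x)$ for a uniform $c>0$, provided $W$ and $\widetilde W$ are close enough; this proximity is enforced by taking $\aleph$ sufficiently small, the same smallness that supplies~\eqref{eq:hol_distort}. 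Using $\tilde x\in\widetilde W_0$, we obtain $r_{W,k}(x)\ge c\widetilde C\Lambda^{-k}|\widetilde W|/2$, so (i) holds with $\bar b:=\min(\hat c,\,c\widetilde C/2)$; the minimum with $\hat c$ is included so that the set $W_0^{\bar b}$ is insensitive to the excess pieces of $W$, as required by its definition.

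For (ii), fix $y\in W_0^{\bar b}$. The defining condition on $y$ combined with \eqref{eq:stable_length} applied to the unstable curve $W$ through $y$ gives
\beq
r^s(y)\;\ge\;\widetilde C^{-1}\inf_{k\ge 0}\Lambda^k r_{W,k}(y)\;\ge\;\widetilde C^{-1}\bar b\,|\widetilde W|,
\eeq
so $W^s(y)$ reaches at least $\widetilde C^{-1}\bar b|\widetilde W|$ units along the stable direction on each side of $y$. Since $W$ crosses $\fS_0$ properly and $y\in W'$, the lateral distance from $y$ to $\widetilde W$ along the stable foliation is at most $\aleph|\widetilde W|$ up to a uniform factor: $y$ is pinched between two points of $W_0$, each of which is within $\aleph|\widetilde W|$ of $\widetilde W$ along a local stable manifold, and the regularity of $W$ (uniform bounds on slope and curvature) forces $y$ itself to be close to $\widetilde W$ in the same sense. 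Imposing the final smallness requirement $\aleph<\widetilde C^{-1}\bar b$ (up to the uniform constant just mentioned), the stable manifold $W^s(y)$ is long enough to reach $\widetilde W$, proving (ii).

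The hardest part is the rank-preserving bijection in (i): making it rigorous requires carefully tracking the synchronization between the cuts of $\cF_k W$ and those of $\cF_k\widetilde W^u$ through the stable foliation, and showing that the relative distances from $\cF_k x$ and $\cF_k\tilde x$ to those cuts are comparable up to a uniform multiplicative constant. This bookkeeping is purely geometric and carries over from the fixed-configuration case without essential change, because the stable manifolds defining $\fS_0$ are constructed from the very sequence $(\cK_n)$ under consideration, and the distortion bound, the invariant cones, and the holonomy Jacobian bound from Sections~\ref{sec:preliminariesI}--\ref{sec:preliminariesII} are uniform over $\IK$.
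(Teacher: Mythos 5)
Your part (ii) is correct and is essentially the paper's argument: $\eqref{eq:stable_length}$ together with $y\in W_0^{\bar b}$ gives $r^s(y)\ge \widetilde C^{-1}\bar b|\widetilde W|$, and a proper crossing with $\aleph$ small forces $y$ to be within $\lesssim\aleph|\widetilde W|$ of $\widetilde W$ along the stable direction, so $W^s(y)$ reaches $\widetilde W$.

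Part (i), however, is where you diverge substantially and unnecessarily. The paper's proof is a short triangle-inequality argument in \emph{Euclidean distance to the bad set}: for $\tilde x\in\widetilde W_0$ and $x=\bfh(\tilde x)\in W_0$, the definition of $\widetilde W_0$ together with the uniform unstable cone gives
$d_\cM\bigl(\cF_n\tilde x,\,\partial\cM\cup\cup_{|k|\geq k_0}\partial\IH_k\bigr)\geq a\widetilde C\Lambda^{-n}|\widetilde W|/2$ for a cone-dependent $a\in(0,1)$, while stable contraction gives $d_\cM(\cF_n x,\cF_n\tilde x)\leq \hat c^{-1}\Lambda^{-n}\aleph|\widetilde W|$; subtracting and converting back to arc-length along $\cF_n W$ (another cone factor) yields $r_{W,n}(x)\ge\bar b\Lambda^{-n}|\widetilde W|$ with $\bar b=\min(\hat c,a\widetilde C/4)$ once $\aleph$ is small. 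No gaps, ranks, or bijections are needed. Your route --- match gaps through the stable foliation, argue ranks coincide, and deduce a multiplicative bound $r_{W,k}(x)\ge c\,r_{\widetilde W^u,k}(\tilde x)$ from the distortion bound (Lemma~\ref{lem:distortion}) and holonomy Jacobian bound (Lemma~\ref{lem:Jac_holonomy}) --- is not only heavier, it also leaves a real gap: $r_{W,k}$ and $r_{\widetilde W^u,k}$ are arc-lengths along \emph{different image curves} $\cF_kW$ and $\cF_k\widetilde W^u$, whose homogeneous components need not have holonomy-matched endpoints (the cut points need not lie on stable manifolds of $\fS_0$, and for $W$ one of the adjacent cuts may come from an endpoint of $W$ rather than from a gap). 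Turning the Jacobian and distortion estimates into the asserted uniform ratio of $r$-quantities therefore requires additional, nontrivial synchronization work that you assert but do not supply. The rank-preserving bijection you describe \emph{is} used in the paper --- but later, in Lemma~\ref{lem:gap_tail}, where it is needed; importing it here adds complexity without payoff. Worth noting also: the gap-synchronization argument in Section~\ref{sec:gaps} is in fact stated \emph{after} Lemma~\ref{lem:dyn_Cantor}, and some of it implicitly uses the conclusion of Lemma~\ref{lem:dyn_Cantor}(i) to compare $W_0$ with $W_0^{\bar b}$, so relying on that structure here risks circularity unless you rebuild it from scratch.
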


\begin{proof} (i) Let $\tilde x \in \widetilde W_0$, and denote by $x \in W_0$ the
intersection of $W^s(\widetilde x)$ and $W$. The assertion follows since for some $a\in(0,1)$ depending only on the cones, $d_\cM(\cF_n\tilde x,\partial\cM\cup\cup_{|k|\geq k_0}\partial\IH_k)\geq a  \widetilde C\Lambda^{-n}|\widetilde W|/2$ for all $n\geq 0$, while $d_\cM(\cF_n x,\cF_n \tilde x)\leq \hat c^{-1}\Lambda^{-n}\aleph|\widetilde W|$ for all 
$n\geq 0$. In particular, $\bar b= \min(\hat c,a\widetilde C/4)$ suffices. (ii) At each point $x\in W_0^{\bar b}$ the local stable manifold $W^s(x)$ extends at least $\bar b \widetilde C^{-1}|\widetilde W|$ units on both sides of $W$, proving (ii) for $\aleph$ sufficiently small.
\end{proof}

We record next a tail bound for gaps of  dynamically defined Cantor sets.

\begin{lem}\label{lem:Cantor}
There exists a uniform constant $C'_\mathrm{g}>0$ such that if $W$ crosses~$\fS_0$ properly, then for any $b>0$ and $R\geq 1$, we have \footnote{We use the convention that the infimum equals $\infty$ if it does not exist in $\IN$.}
\beq\label{eq:Cantor}
\fm_{W}\Bigl\{x\in W'\,:\,\inf\bigl\{k\in\IN: r_{W,k}(x) <b\Lambda^{-k}|\widetilde W|\bigr\}\in[R,\infty)\Bigr\} 
\leq b\,C_\mathrm{g}' \Lambda^{-R}|\widetilde W| \ .
\eeq
\end{lem}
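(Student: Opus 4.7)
The plan is to reduce the claim to a straightforward union bound combined with the Growth Lemma (Lemma~\ref{lem:growth}). First, observe that the set under consideration is contained in
\[
\bigcup_{k\ge R}\bigl\{x\in W' \,:\, r_{W,k}(x) < b\Lambda^{-k}|\widetilde W|\bigr\},
\]
since if the infimum lies in $[R,\infty)$, then in particular there exists some finite $k\ge R$ for which $r_{W,k}(x) < b\Lambda^{-k}|\widetilde W|$. Passing from $W'$ to $W$ only enlarges the measured set, which is harmless.

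Next, I would invoke the Growth Lemma with $\ve_k := b\Lambda^{-k}|\widetilde W|$ for each $k\ge R$, giving
\[
\fm_W\bigl\{r_{W,k} < \ve_k\bigr\} \le C_\mathrm{gr}(\vartheta^k + |W|)\,\ve_k = C_\mathrm{gr}(\vartheta^k + |W|)\, b\Lambda^{-k}|\widetilde W|.
\]
Since the proper crossing condition requires $W$ to be a regular unstable curve, $|W|$ is bounded above by a uniform constant (the maximum length of a homogeneous unstable curve). Summing over $k\ge R$ then produces two geometric series, one in $\vartheta/\Lambda$ and one in $1/\Lambda$, both with ratios in $(0,1)$ that depend only on $\IK$. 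Using $\vartheta<1$ to absorb the first series into the $\Lambda^{-R}$ decay, we obtain
\[
\sum_{k\ge R} C_\mathrm{gr}(\vartheta^k + |W|)\, b\Lambda^{-k}|\widetilde W| \ \le\ b\, C_\mathrm{g}'\, \Lambda^{-R}|\widetilde W|
\]
for a uniform constant $C_\mathrm{g}'>0$, which is exactly~\eqref{eq:Cantor}.

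There is no real obstacle here: the estimate is a direct consequence of the one-step Growth Lemma, which already encodes the dominant competition between hyperbolic expansion and accumulation of cuts at tangential collisions. The only thing one must keep track of is that all constants ($\vartheta$, $\Lambda$, $C_\mathrm{gr}$, and the uniform upper bound on $|W|$) are uniform over $\IK$ and independent of the configuration sequence $(\cK_n)$, of $\widetilde W$, and of $b$; this is guaranteed by the standing hypothesis and by the regularity built into the notion of a proper crossing.
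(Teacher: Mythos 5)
Your argument is correct and is essentially the paper's own: both bound the target set by the union $\bigcup_{k\ge R}\{r_{W,k}<b\Lambda^{-k}|\widetilde W|\}$, apply the Growth Lemma term by term with $\ve_k = b\Lambda^{-k}|\widetilde W|$, and sum the resulting geometric series using the uniform upper bound $L_0$ on $|W|$. Nothing of substance differs.
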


\begin{proof} 
The Growth Lemma yields the following upper bound on the left side of~\eqref{eq:Cantor}:
\beqn
\begin{split}
\sum_{k\geq R}\fm_{W}\{r_{{W},k}<b\Lambda^{-k}|\widetilde W|\} 
\leq 
b\, C_\mathrm{gr}  |\widetilde{W}| \sum_{k\geq R}(\vartheta^k  + |W|) \Lambda^{-k} 
\leq  b\,C_\mathrm{gr}\widetilde C|\widetilde W| \frac{(1  + L_0)}{1-\Lambda^{-1}}\Lambda^{-R} ,
\end{split}
\eeqn
where $L_0$ is the maximum length of a (connected) unstable curve.
\end{proof}

The following is the result we need. 

\begin{lem}\label{lem:gap_tail}
There exist uniform constants~$C_\mathrm{g}>0$ and~$c_\mathrm{g}>0$ for which
the following hold: Let~$\fS_0$ be as above, and let~$W$ be an arbitrary 
unstable curve which crosses~$\fS_0$ properly. Then

\smallskip
\noindent (a)  for every $R \ge 0$, 
\beq\label{eq:rank_tail} 
\fm_{W}\{x\in W\,:\, \text{$x$ is in a gap of rank $\geq R$}\} 
\leq C_\mathrm{g} \Lambda^{-R}| W| \ ;
\eeq

\smallskip
\noindent (b) for any gap $V\subset W\setminus W_{0}$ of any rank $R\geq 0$, $\cF_{R-1}V$ is a homogeneous unstable curve and
\beq\label{eq:rank_growth}
|\cF_{R} V| \geq  c_\mathrm{g}\Lambda^{-R}|\widetilde W|.
\eeq
\end{lem}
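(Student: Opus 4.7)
The plan is to deduce both statements from the rank characterization recalled in Section~\ref{sec:gaps}—namely, that the rank of a gap $V\subset W\setminus W_0$ equals the smallest $R$ for which $\cF_R V$ meets the bad set $\partial\cM\cup\cup_{|k|\geq k_0}\partial\IH_k$—together with the dynamically defined Cantor set $W_0^{\bar b}$ of Lemma~\ref{lem:dyn_Cantor} and the tail bound of Lemma~\ref{lem:Cantor}. The crucial bridge is Lemma~\ref{lem:dyn_Cantor}(i), which places the endpoints of any such $V$ (a priori only in $W_0$) inside the larger Cantor set $W_0^{\bar b}$, thereby securing the uniform lower bound $r_{W,k}(y)\geq \bar b\,\Lambda^{-k}|\widetilde W|$ for every endpoint $y$ of $V$ and every $k\geq 0$.

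For part (b), the homogeneity of $\cF_{R-1}V$ is immediate from the rank characterization: $\cF_k V$ is disjoint from the bad set for every $k\leq R-1$, so it sits inside a single homogeneous component of $\cF_k W$. For the length bound, $\cF_R V\subset \cF_R W$ is a connected arc from $\cF_R y$ (with $y$ an endpoint of $V$) to a point $z\in \cF_R V\cap(\partial\cM\cup\cup_{|k|\geq k_0}\partial\IH_k)$. Since $z$ is a bad point and $r_{W,R}(\cF_R y)\geq \bar b\,\Lambda^{-R}|\widetilde W|$ is a lower bound on the distance along $\cF_R W$ from $\cF_R y$ to the nearest bad point, the length of $\cF_R V$ must be at least $\bar b\,\Lambda^{-R}|\widetilde W|$, yielding~\eqref{eq:rank_growth} with $c_\mathrm{g}=\bar b$.

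For part (a), I first establish the pointwise claim that $r_{W,k}(x)\geq \bar b\,\Lambda^{-k}|\widetilde W|$ for every $x$ lying in a gap $V$ of rank $\geq R$ and every $k<R$. The argument mirrors the one above: $\cF_k V$ lies in a single homogeneous component of $\cF_k W$, and walking along $\cF_k W$ from $\cF_k x$ in either direction one first traverses part of $\cF_k V$ and then emerges past an endpoint $\cF_k y$, beyond which there is a further cushion of at least $\bar b\,\Lambda^{-k}|\widetilde W|$ before any bad point. Consequently, the set of $x\in W$ in gaps of rank $\geq R$ is contained in $\{x\in W' : \inf\{k\in\IN : r_{W,k}(x)<\bar b\,\Lambda^{-k}|\widetilde W|\}\geq R\}$, whose measure is bounded by $\bar b\,C_\mathrm{g}'\,\Lambda^{-R}|\widetilde W|\leq C_\mathrm{g}\,\Lambda^{-R}|W|$ by Lemma~\ref{lem:Cantor}, the last inequality using $|\widetilde W|\leq |W|$ from the proper-crossing hypothesis. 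The only genuinely geometric point—and the one I expect to require the most care—is verifying that the two contributions along $\cF_k W$ (the interior segment of $\cF_k V$ and the $W_0^{\bar b}$-cushion beyond its endpoint) combine additively; this relies on $\cF_k V$ lying in a single homogeneous component of $\cF_k W$ for $k<R$, after which everything reduces to bookkeeping on the prior lemmas.
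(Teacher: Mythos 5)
Your argument for part~(b) is correct and in fact a bit more direct than the route the paper sketches: instead of first establishing the length bound for the gap $\widetilde V$ on $\widetilde W$ and then transferring it to $V$ via the stable manifolds joining their endpoints, you argue directly on $W$ using $W_0\subset W_0^{\bar b}$ from Lemma~\ref{lem:dyn_Cantor}(i). (One notational slip: you mean $r_{W,R}(y)$, not $r_{W,R}(\cF_R y)$.)

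For part~(a), however, there is a genuine gap. Your pointwise claim --- that $r_{W,k}(x)\geq \bar b\,\Lambda^{-k}|\widetilde W|$ for every $x$ in a gap $V$ of rank $\geq R$ and every $k<R$ --- is correct, but the reduction to Lemma~\ref{lem:Cantor} does not go through. The crucial point is that $W_0 = W\cap\fS_0$ is a \emph{strict} subset of the dynamically defined Cantor set $W_0^{\bar b}$, so a gap $V$ of $W\setminus W_0$ generically contains a positive-$\fm_W$-measure portion of $W_0^{\bar b}$. For $x\in V\cap W_0^{\bar b}$ the inequality $r_{W,k}(x)\geq \bar b\,\Lambda^{-k}|\widetilde W|$ holds for \emph{every} $k\geq 0$, so the infimum in your set is $+\infty$; such points are not in the set $\{\inf\{\cdot\}\in[R,\infty)\}$ that Lemma~\ref{lem:Cantor} controls (and $W_0^{\bar b}$ has $\fm_W$-measure comparable to $|W|$, not $O(\Lambda^{-R}|W|)$). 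Hence Lemma~\ref{lem:Cantor} bounds only $\bigcup_V(V\setminus W_0^{\bar b})$, and the portion $\bigcup_V(V\cap W_0^{\bar b})$ is left unaccounted for. The paper handles precisely this portion with a separate argument: by Lemma~\ref{lem:dyn_Cantor}(ii), every point of $V\cap W_0^{\bar b}$ lies on a local stable manifold reaching $\widetilde W$, its holonomy image lands inside the corresponding gap $\widetilde V\subset\widetilde W\setminus\widetilde W_0$, and the Jacobian bound~\eqref{eq:hol_distort} yields $\fm_W(V\cap W_0^{\bar b})\leq\tfrac{11}{10}\,\fm_{\widetilde W}(\widetilde V)$; one then sums over $V$ and applies Lemma~\ref{lem:Cantor} on $\widetilde W$. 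Without this step your proof does not establish~\eqref{eq:rank_tail}.
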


\begin{proof}[Proof of Lemma~\ref{lem:gap_tail}]  
(a) For $W=\widetilde W$, the result follows immediately from 
Lemma~\ref{lem:Cantor}. 
 For general~$W$, a separate argument is needed as $W \cap \fS_0$ is not exactly 
 of the form in (\ref{cantor}). Let $\bar b$ be such that $W_0 \subset W^{\bar b}_0$
(Lemma \ref{lem:dyn_Cantor}(i)), and let $V$ be a gap of $W \setminus W_0$ of 
rank~$R$. Then $V$ is the union $V \cap W^{\bar b}_0$ and a collection of gaps of 
$W \setminus W_0^{\bar b}$. We observe that these gaps have ranks $\ge R$,
because of the characterization of rank (for both kinds of gaps)
as the first time their images meet
the bad set.
As for the measure of $V \cap W^{\bar b}_0$, by Lemma \ref{lem:dyn_Cantor}(ii)
and the properties 
of the Jacobian of the holonomy map ${\bf h} : \widetilde W \cap (\cup_{x \in W^{\bar b}_0} W^s(x))
 \to W$, we have $\fm_W(V \cap W^{\bar b}_0) \le \frac{11}{10} \fm_{\widetilde W}({\bf h}^{-1}(V \cap W^{\bar b}_0)) \le \frac{11}{10}
\fm_{\widetilde W}(\widetilde V)$. 

Summing over gaps $V$ of rank $\ge R$ in 
$W \setminus W_0$ and applying Lemma~\ref{lem:Cantor}
to the gaps of $W^{\bar b}_0$ and~$\widetilde W_0$ (recalling $|\widetilde W|<|W|$), we obtain 
\beqn
\fm_{W}\bigl(\text{union of all gaps $V\subset W\setminus W_0$ of rank $\geq R$}\bigr)
\leq 
(\bar b+\tfrac{11}{10})\, C'_\mathrm{g}\Lambda^{-R}|W|
\eeqn
and (a) is proved by choosing $C_\mathrm{g}$ large enough. 

\smallskip 
To prove (b), first make the argument for gaps $\widetilde V$ of $\widetilde W_0$ (which is straightfoward), and then leverage the connection between $V$ and $\widetilde V$ 
via the stable manifolds connecting their end points.
\end{proof}


\subsection{Recovery of densities}\label{sec:recovery}
As explained in the Synopsis, the uncoupled part of the measure has to `recover'
and become proper again before it is eligible for the next coupling. Postponing the full
picture to later, we focus here on the situation of the last two subsections, i.e., 
a reference configuration $\widetilde \cK$, a sequence $(\cK_n)$ with
$\cK_n \in \cN_\ve(\widetilde \cK)$ for $0 \le n \le s'$, and a magnet $\fS_0$. 
We \emph{assume} that a coupling takes place at time $0$, and 
consider the recovery process thereafter.

\subsubsection{Single measured unstable curve}

We treat first the case of a single measured unstable curve $(W,\nu)$ making 
a proper crossing of the magnet, as described in Paragraph B of the Synopsis with $n=0$. We denote by $\rho$ and $\bar\rho$ the densities of $\nu$ and of the coupled
part of $\nu$ respectively. We also let $W'$ be the shortest subsegment 
of $W$ containing $W \cap \fS_0$, so that $W \setminus W'$ consists 
of the two excess pieces. 
As in Paragraph B, we extend $\bar \rho$ to a regular density called $\check \rho$ on~$W'$, 
and decompose the uncoupled part of~$\rho$ into densities 
of the following types: 

(a) $\rho|_{W \setminus W'}$;

(b) $(\rho-\check\rho)|_{W'}$ (this will be referred to as the ``top density"), and 

(c) $\check\rho|_V$ as $V$ ranges over all the gaps of $W$.

\noindent We consider separately each of these densities (counting $\check\rho|_V$
for different $V$ as different measures), 
and discuss their recovery times, meaning the time it takes for such a measure 
to become proper (see Definition~\ref{defn:proper}). 

\medskip
\noindent (a) Since these are regular to begin with,
the only reason why they may not be proper is that the excess pieces 
may be too short. Thus their recovery times may depend on $|\widetilde W|$
(each excess curve has length $\ge |\widetilde W|$),
but are otherwise uniformly bounded and independent of $(\cK_n)$. 


\medskip
\noindent (c) As discussed in the Synopsis, 
the density $\check\rho|_V$ for each $V$ is regular to begin with. Thus recovery time 
 has only to do with length. For a gap of rank $R$, 
recall that the image $\cF_{R-1}V$ is a regular homogeneous unstable curve.
Denoting $\cZ=\check \nu(V)/|V|$ where $\check \nu$ is the measure on $V$ with density~$\check\rho$, we have 
$\cZ_{R-1}/\check \nu(V)=1/|\cF_{R-1}V|\leq C_\mathrm{e}^2/|\cF_R V|^2 \leq C_\mathrm{e}^2 c_\mathrm{g}^{-2}|\widetilde W|^{-2}\Lambda^{2R}$ by Lemma~\ref{lem:growth_bound} and~\eqref{eq:rank_growth}. 
In the next step, the curve $V_{R}=\cF_{R-1}V$ will get cut, but we may proceed with the aid of Lemma~\ref{lem:Z} and obtain $\cZ_n/\nu(V)\leq C_\mathrm{p}$ for
$
n \geq R-1+(2R\log\Lambda+|\log C_\mathrm{e}^2 c_\mathrm{g}^{-2}|+2|\log|\widetilde W||)/|\log\vartheta_\mathrm{p}|.
$
In other words, we have proved
\begin{lem}\label{lem:gap_proper}
There exists a uniform constant $c_\mathrm{p}>0$ such that the following holds.
In the setting of Lemma~\ref{lem:gap_tail}, after $c_\mathrm{p}(R+|\log|\widetilde W||)$ steps, each one of the  measures on the gaps of rank~$R$ will have become a \emph{proper} measure. 
\end{lem}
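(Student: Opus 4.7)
The plan is to formalize the three-sentence sketch immediately preceding the lemma. The argument proceeds by combining three previously established facts: (i) the density $\check\rho|_V$ on any gap $V$ is regular by construction (since the regularity-preserving extension in Lemma~\ref{lem:regular_extension} was applied to produce $\check\rho$); (ii) the geometric control of gap images given by Lemma~\ref{lem:gap_tail}(b); and (iii) the contraction estimate for the quantity $\cZ_n/\mu(\cM)$ provided by Lemma~\ref{lem:Z}.

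First, I would fix a gap $V$ of rank $R$ and let $\check\nu = \check\rho\cdot \fm_V$, so $(V,\check\nu)$ is a regular measured unstable curve. By Lemma~\ref{lem:gap_tail}(b), the iterate $\cF_{R-1}V$ is a single homogeneous unstable curve and $|\cF_R V|\geq c_\mathrm{g}\Lambda^{-R}|\widetilde W|$. Applying Lemma~\ref{lem:growth_bound} to the one-step image $F_R$ acting on the homogeneous curve $\cF_{R-1}V$ yields
\[
|\cF_{R-1}V|\;\geq\; C_\mathrm{e}^{-2}|\cF_R V|^2 \;\geq\; C_\mathrm{e}^{-2}c_\mathrm{g}^{2}\Lambda^{-2R}|\widetilde W|^{2}.
\]
Since $\cF_{R-1}V$ is a single curve, the quantity $\cZ$ for the measured family $(\cF_{R-1}V,(\cF_{R-1})_*\check\nu)$, normalized by its total mass $\check\nu(V)$, equals $1/|\cF_{R-1}V|$, and is therefore bounded by $C_\mathrm{e}^{2}c_\mathrm{g}^{-2}\Lambda^{2R}|\widetilde W|^{-2}$.

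Next, by Lemma~\ref{lem:pair_image}, the pushforward $(\cF_{R-1})_*\check\nu$ remains regular, so it is a regular measured unstable family in the sense of Section~\ref{sec:stacks}. Applying Lemma~\ref{lem:Z} with initial time $R-1$ and running for $m$ additional steps gives
\[
\frac{\cZ_{R-1+m}}{\check\nu(V)}\;\leq\;\frac{C_\mathrm{p}}{2}\!\left(1+\vartheta_\mathrm{p}^{\,m}\cdot C_\mathrm{e}^{2}c_\mathrm{g}^{-2}\Lambda^{2R}|\widetilde W|^{-2}\right).
\]
Demanding $\vartheta_\mathrm{p}^{\,m}\cdot C_\mathrm{e}^{2}c_\mathrm{g}^{-2}\Lambda^{2R}|\widetilde W|^{-2}\leq 1$ forces
\[
m\;\geq\;\frac{2R\log\Lambda+2|\log|\widetilde W||+\log(C_\mathrm{e}^{2}c_\mathrm{g}^{-2})}{|\log\vartheta_\mathrm{p}|},
\]
and then $\cZ_{R-1+m}<C_\mathrm{p}\,\check\nu(V)$, i.e., the measure is proper by Definition~\ref{defn:proper}. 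Absorbing the additive constants and the extra $R-1$ steps into a single uniform prefactor, one can choose $c_\mathrm{p}>0$ large enough that the above is satisfied as soon as $n\geq c_\mathrm{p}(R+|\log|\widetilde W||)$, establishing the claim.

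I do not anticipate a serious obstacle: every ingredient is already in place, and the argument is a direct composition of previously proved lemmas. The only point to be mildly careful about is that Lemma~\ref{lem:Z} is stated for regular measured unstable families rather than single curves, which is why I would first invoke Lemma~\ref{lem:pair_image} to guarantee regularity is preserved under $\cF_{R-1}$, and also note that a single regular measured curve is trivially a regular measured unstable family (with $\cA$ a singleton). The constant $c_\mathrm{p}$ depends only on the uniform constants $C_\mathrm{e}, c_\mathrm{g}, C_\mathrm{p}, \vartheta_\mathrm{p}, \Lambda$, hence is itself uniform.
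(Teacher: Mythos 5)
Your proof is correct and takes essentially the same route as the paper's own brief argument: compute $\cZ_{R-1}/\check\nu(V)=1/|\cF_{R-1}V|$ using Lemma~\ref{lem:gap_tail}(b) together with the one-step case of Lemma~\ref{lem:growth_bound}, then run Lemma~\ref{lem:Z} forward from time $R-1$ and absorb the additive terms into a uniform $c_\mathrm{p}$. The only cosmetic difference is that the paper writes $|\log C_\mathrm{e}^2 c_\mathrm{g}^{-2}|$ where you have $\log(C_\mathrm{e}^2 c_\mathrm{g}^{-2})$; both are harmlessly absorbed into $c_\mathrm{p}$.
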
 

\medskip
\noindent (b)
We begin by stating a general result. See the Appendix for a proof.
\begin{lem}\label{lem:top_density}
There exists a uniform constant $C_\mathrm{top}>0$ such that the following holds. Given a sequence $(\cK_n)_{n\geq 0}$, suppose that two regular densities $\varphi,\check\varphi$ on the same unstable curve~$W$ satisfy $b\leq \varphi/\check\varphi\leq B$ for some $B>b>1$
everywhere on $W$, and let $\psi=\varphi-\check\varphi$. Then
\beq\label{eq:dyn_Holder}
|\log \psi(x)-\log\psi(y)| \leq C_\mathrm{top}\frac{B+1}{b-1}\,\theta^{s(x,y)}
\eeq
for all $x,y \in W$.
\end{lem}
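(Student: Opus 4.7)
The plan is to factor $\psi=\check\varphi\cdot(f-1)$ where $f:=\varphi/\check\varphi$, and to treat $\log\check\varphi$ and $\log(f-1)$ separately. The first factor is regular by hypothesis and contributes $C_\mathrm{r}\theta^{s(x,y)}$ to the dynamic H\"older seminorm of $\log\psi$, so the real task is to estimate the second factor. The hypothesis $b\le f\le B$ with $b>1$ is exactly what keeps $f-1$ bounded away from $0$, even though $\psi$ itself need not be; this is the structural observation that makes the estimate possible.

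First I would show that $\log f$ is dynamically H\"older with constant at most $2C_\mathrm{r}$, simply because $\log f = \log\varphi - \log\check\varphi$ and both densities are regular. Combined with $f(y)\le B$ and the elementary bound $|e^\delta-1|\le |\delta|e^{|\delta|}$, this gives
\begin{equation*}
|f(x)-f(y)| = f(y)\,\bigl|e^{\log f(x)-\log f(y)}-1\bigr| \le 2C_\mathrm{r} e^{2C_\mathrm{r}} B\,\theta^{s(x,y)}.
\end{equation*}
Next I would apply the mean value theorem to $g(t)=\log(t-1)$ on the segment between $f(x)$ and $f(y)$, noting $g'(\xi)=1/(\xi-1)$ and $\xi-1\ge b-1$, to get
\begin{equation*}
|\log(f(x)-1)-\log(f(y)-1)| \le \frac{|f(x)-f(y)|}{b-1} \le \frac{2C_\mathrm{r} e^{2C_\mathrm{r}} B}{b-1}\,\theta^{s(x,y)}.
\end{equation*}
Adding the contribution from $\log\check\varphi$ and using $B/(b-1)\le (B+1)/(b-1)$ together with the trivial bound $1\le (B+1)/(b-1)$ (valid since $B\ge b$ and $b>1$), I would absorb everything into a single uniform constant $C_\mathrm{top}$ depending only on $C_\mathrm{r}$ (and hence on $\IK$), giving the claimed bound.

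There is no serious dynamical obstacle here: both densities are already assumed regular, so the sharp growth/distortion machinery of Sections \ref{sec:preliminariesI}--\ref{sec:preliminariesII} is not invoked. The only thing to watch is the correct $B,b$-dependence. Naively one might worry that the \emph{multiplicative} bound $(B-1)/(b-1)$ on $(f(x)-1)/(f(y)-1)$ only yields a logarithmic factor, not the linear one $(B+1)/(b-1)$ demanded by the lemma; the resolution is the MVT step above, which produces the desired linear dependence on $1/(b-1)$ by converting a multiplicative error on $f-1$ into the additive quantity $|f(x)-f(y)|$ that is itself small thanks to dynamic H\"older regularity of $f$. This is the step where the precise form of the constant $C_\mathrm{top}(B+1)/(b-1)$ is fixed, and it is the only place in the argument where even a minor subtlety arises.
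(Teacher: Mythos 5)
Your proof is correct, but the route is genuinely different from the paper's. The paper works directly with the ratio $\psi(x)/\psi(y)$, starting from the exact algebraic identity
\beqn
\frac{\psi(x)}{\psi(y)} = 1 + \left[\left(\frac{\varphi(x)}{\varphi(y)}-1\right)\frac{\varphi(y)}{\check\varphi(y)}+\left(1-\frac{\check\varphi(x)}{\check\varphi(y)}\right)\right]\left(\frac{\varphi(y)}{\check\varphi(y)}-1\right)^{-1},
\eeqn
bounding the bracket by $(B+1)(\exp(C_\mathrm{r}\theta^{s(x,y)})-1)$ and the denominator from below by $b-1$, applying $\log(1+t)\leq t$, and then splitting into cases $s(x,y)\geq S$ and $s(x,y)<S$ to absorb the exponential nonlinearity into the constant. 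Your factorization $\psi=\check\varphi\cdot(f-1)$ decouples the two sources of regularity and replaces the case-split with a mean value theorem estimate on $\log(t-1)$, fed by the additive estimate $|f(x)-f(y)|\leq 2C_\mathrm{r}e^{2C_\mathrm{r}}B\,\theta^{s(x,y)}$. Both arguments make the same essential use of the hypothesis $b>1$ --- in the paper it bounds the denominator $\varphi(y)/\check\varphi(y)-1\geq b-1$, in yours it bounds $g'(\xi)=1/(\xi-1)\leq 1/(b-1)$ --- and both arrive at a uniform $C_\mathrm{top}$ depending only on $C_\mathrm{r}$. Your version is cleaner in that it avoids the auxiliary threshold $S$; the small price is the slightly worse explicit constant ($e^{2C_\mathrm{r}}$ and $2C_\mathrm{r}$ from the doubling in $\log f=\log\varphi-\log\check\varphi$), which is harmless since only uniformity matters. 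Your closing check that $1\leq(B+1)/(b-1)$ (needed to absorb the $C_\mathrm{r}$ coming from the $\check\varphi$ factor) is correct and is a point the paper sidesteps by never separating off that contribution.
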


By Lemma~\ref{lem:rhobar_bound}, these assumptions are satisfied for $\varphi=\rho$
and $\check \varphi = \check\rho$ 
with $b=\frac85$ and $B=\frac54\zeta^{-1}e^{C_\mathrm{r}}$ where $\zeta$ is the fraction
of the measure coupled (see Section~\ref{sec:outline}). 
Even after the densities become regular, it may take additional time for the measure
to become proper. The next lemma, proved in the Appendix, is suited for such situations.

\begin{lem}\label{lem:top_aid}
There exists a uniform constant $\bar C_\mathrm{p}>0$ such that the following holds. Given an admissible sequence $(\cK_n)_{n\geq 0}$, suppose $\nu$ is a measure on a regular unstable curve~$W$ whose density~$\psi$ satisfies $|\log\psi(x)-\log\psi(y)|\leq C\theta^{s(x,y)}$ for some $C>C_\mathrm{r}$. Then the push-forward $(\cF_n)_*\nu$ will be a proper measure, if $n\geq \bar C_\mathrm{p}(|\log|W||+C)$.
\end{lem}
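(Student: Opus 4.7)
The strategy is to wait first for the density of $\nu$ to enter the regular regime, and then for its mass to be redistributed onto long enough curves so that $\cZ_n/\nu(\cM)<C_\mathrm{p}$. For the first step, Lemma~\ref{lem:pair_image} applied with the given $C$ shows that after $n_0:=\lceil\log(2C/C'_\mathrm{r})/|\log\theta|\rceil=O(\log C)$ iterations, the push-forward density on each homogeneous component of $\cF_{n_0}W$ has dynamical H\"older constant $\leq C_\mathrm{r}$. Combined with Lemma~\ref{lem:curvature}, which yields regularity of the component curves after a uniform number of steps, $(\cF_{n_0})_*\nu$ is a regular measured unstable family in the sense of Section~\ref{sec:stacks}.

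The nontrivial step is to estimate $\cZ_{n_0}(\nu)/\nu(\cM)$: during the first $n_0$ iterations the density is not yet regular, so Lemma~\ref{lem:Z} does not apply to $\nu$ directly. I would instead compare $\nu$ to the uniform measure $\fm_W$ on~$W$. The hypothesis $|\log\psi(x)-\log\psi(y)|\leq C\theta^{s(x,y)}\leq C$ yields $\max\psi\leq e^C\bar\psi$ with $\bar\psi=\nu(\cM)/|W|$, so for every $n$ and each component $W^-_{n,i}$ of the canonical subdivision of $W$,
\[
\nu_{n,i}(\cM) = \int_{W^-_{n,i}}\psi\,\rd\fm_W \leq e^C\bar\psi\cdot|W^-_{n,i}| = e^C\bar\psi\cdot(\fm_W)_{n,i}(\cM).
\]
Dividing by $|W_{n,i}|$ and summing in $i$ gives $\cZ_n(\nu)/\nu(\cM)\leq e^C\cZ_n(\fm_W)/|W|$. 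The measure $\fm_W$ is trivially regular on the regular curve $W$, with $\cZ_0(\fm_W)/|W|=1/|W|$, so Lemma~\ref{lem:Z} applied to it yields $\cZ_n(\fm_W)/|W|\leq\tfrac12 C_\mathrm{p}(1+\vartheta_\mathrm{p}^n/|W|)\leq C_\mathrm{p}$ once $n\geq n_1:=\lceil|\log|W||/|\log\vartheta_\mathrm{p}|\rceil$. Setting $n_\star=\max(n_0,n_1)$, the push-forward $(\cF_{n_\star})_*\nu$ is thus a regular measured unstable family with $\cZ_{n_\star}(\nu)/\nu(\cM)\leq e^C C_\mathrm{p}$.

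A second application of Lemma~\ref{lem:Z}, now started from time $n_\star$, gives
\[
\cZ_n(\nu)/\nu(\cM)\leq \tfrac12 C_\mathrm{p}\bigl(1+\vartheta_\mathrm{p}^{n-n_\star}\,e^C C_\mathrm{p}\bigr), \qquad n\geq n_\star,
\]
which drops strictly below $C_\mathrm{p}$---making $(\cF_n)_*\nu$ proper---as soon as $n-n_\star\geq(C+\log C_\mathrm{p})/|\log\vartheta_\mathrm{p}|$. Collecting $n_0=O(\log C)$, $n_1=O(|\log|W||)$ and $n-n_\star=O(C)$, one obtains $n\geq\bar C_\mathrm{p}(|\log|W||+C)$ for a uniform $\bar C_\mathrm{p}$, as claimed. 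The main obstacle lies in the second paragraph: an exponential blow-up of the number of homogeneous components prevents a direct estimate of $\cZ_{n_0}(\nu)$, and without regularity of the density one cannot iterate Lemma~\ref{lem:Z}; the comparison with $\fm_W$, which is regular from time zero, is what makes the accounting close, at the price of an $e^C$ factor absorbed by the $O(C)$ extra iterations required in the final phase.
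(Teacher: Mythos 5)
Your proposal is correct and follows essentially the same route as the paper's own proof: both compare $\nu$ to the uniform measure $\fm_W$ on the same curve (which is regular from time zero), apply Lemma~\ref{lem:Z} to $\fm_W$ to control $\cZ_n(\fm_W)/|W|$, transfer that control to $\cZ_n(\nu)/\nu(\cM)$ at the price of an $e^C$ factor, and then apply Lemma~\ref{lem:Z} a second time once the density has become regular (Lemma~\ref{lem:pair_image}) to absorb that factor in $O(C)$ further iterates. The paper phrases the transfer via $\sup\rho\leq e^C\inf\rho\leq e^C\nu(W)/|W|$ rather than your $\max\psi\leq e^C\bar\psi$, and uses $C_\mathrm{r}$ where you use $C'_\mathrm{r}$ in the $O(\log C)$ bookkeeping, but these are immaterial differences.
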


From these lemmas, we conclude that
\begin{lem}\label{lem:top_proper}
The maximum time it takes for each of the ``top'' measures to become a proper measure is $\bar c_\mathrm{p}+\bar C_\mathrm{p}(|\log|\widetilde W||+|\log \zeta|)$, where $\bar c_\mathrm{p}>0$ is another uniform constant.
\end{lem}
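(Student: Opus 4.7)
The plan is to analyze the top measure $\mu=\psi\,\fm_{W'}$, where $\psi:=(\rho-\check\rho)|_{W'}$, via a two-stage strategy: first wait for $\psi$ to become dynamically regular, then wait for the resulting regular family to become proper. A direct application of Lemma~\ref{lem:top_aid} would yield a bound proportional to the initial H\"older constant of $\psi$, which is $\sim\zeta^{-1}$; the two-stage split is what produces the sharper $|\log\zeta|$ dependence in the claim.

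\emph{Step 1 (initial H\"older bound for $\psi$).} Combining Lemma~\ref{lem:rhobar_bound} with the regularity $\sup_{W'}\rho\leq e^{C_\mathrm{r}}\inf_{W'}\rho$ yields
$$
b := \tfrac{8}{5} \;\leq\; \frac{\rho}{\check\rho} \;\leq\; \tfrac{5}{4}e^{2C_\mathrm{r}}\zeta^{-1} \;=:\; B
$$
on $W'$, so that Lemma~\ref{lem:top_density} gives $|\log\psi(x)-\log\psi(y)|\leq K\zeta^{-1}\theta^{s(x,y)}$ for some uniform $K$. On the other hand, the pointwise estimate $\psi\geq \tfrac{3}{8}\inf_{W'}\rho$ (valid also on the gaps, since $\check\rho\leq \tfrac{5}{8}\inf_{W'}\rho$) yields the \emph{uniform} ratio bound $\sup_{W'}\psi/\inf_{W'}\psi\leq \tfrac{8}{3}e^{C_\mathrm{r}}$, independent of $\zeta$.

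\emph{Step 2 (Phase A: wait for regularity).} Apply Lemma~\ref{lem:pair_image} with initial constant $C = K\zeta^{-1}$. After $N_1$ iterations the H\"older constant of the pushed-forward density on each homogeneous component is at most $C_\mathrm{r}'/2 + (K\zeta^{-1}-C_\mathrm{r}'/2)\theta^{N_1}$; the choice $N_1 = \lceil c_1(|\log\zeta|+1)\rceil$ for a suitable uniform $c_1$ makes this $\leq C_\mathrm{r}$. The pushforward $(\cF_{N_1})_*\mu$ is therefore a regular measured unstable family, to which Lemma~\ref{lem:Z} now applies.

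\emph{Step 3 (Phase B: wait for properness; main obstacle).} I must bound $\cZ_{N_1}/\mu(\cM)$ in terms of $|\widetilde W|$, since Lemma~\ref{lem:Z} cannot be invoked from time~$0$ (the density is not yet regular there). I would estimate $\cZ_{N_1}$ by hand: for each homogeneous component $W_i$ of $\cF_{N_1}(W')$ with preimage $W_i'\subset W'$, Lemma~\ref{lem:distortion} and \eqref{eq:expansion} yield $\mu_i/|W_i|\leq \sup_{W_i'}\psi/(\hat c\,\hat c_\mathrm{d}\Lambda^{N_1})$, where $\mu_i$ denotes the $\mu$-mass on $W_i$. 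Summing over the at-most-polynomial-in-$N_1$ components (by the finite-horizon assumption) and using $\mu(\cM)\geq \inf_{W'}\psi\cdot|\widetilde W|$ together with the uniform ratio bound from Step~1 gives $\cZ_{N_1}/\mu(\cM)\leq K'/|\widetilde W|$ for a uniform $K'$. Lemma~\ref{lem:Z} then yields
$$
\frac{\cZ_{N_1+m}}{\mu(\cM)}\;\leq\;\frac{C_\mathrm{p}}{2}\left(1+\vartheta_\mathrm{p}^m\,\frac{K'}{|\widetilde W|}\right)\!,
$$
which is $\leq C_\mathrm{p}$ once $m\geq c_2(|\log|\widetilde W||+1)$ for a uniform $c_2$. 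Adding the two phases gives $N_1+m\leq \bar c_\mathrm{p}+\bar C_\mathrm{p}(|\log|\widetilde W||+|\log\zeta|)$, as claimed. The hardest part of the argument is this $\cZ_{N_1}$ bound, which requires absorbing the non-regularity of $\psi$ into uniform distortion and expansion constants via a direct Growth-Lemma-style computation on the initial curve $W'$.
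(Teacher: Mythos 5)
Your two-stage strategy is the right one, and in fact it addresses a real lacuna in the paper: applying Lemma~\ref{lem:top_aid} directly with $C\sim\zeta^{-1}$ does give only $\sim\zeta^{-1}$, not $|\log\zeta|$, because the proof of that lemma pays an $e^C$ factor when converting the dynamical H\"older bound into a ratio bound $\sup\rho/\inf\rho\le e^C$. Your key observation --- that the top density enjoys the \emph{uniform} ratio bound $\sup_{W'}\psi/\inf_{W'}\psi\le\tfrac83 e^{C_\mathrm{r}}$ (equivalently $\psi\ge\tfrac38\rho$, which the paper records in Paragraph~B of Section~\ref{sec:outline} but never feeds into Lemma~\ref{lem:top_aid}) --- is exactly what allows one to replace $e^C$ by a $\zeta$-independent constant and thereby keep only the $\log C\sim|\log\zeta|$ dependence coming from the waiting time $N_1$ in Lemma~\ref{lem:pair_image}. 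Steps~1 and~2 are correct as written.

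Step~3 has a genuine flaw: the homogeneous components of $\cF_{N_1}(W')$ are not at-most-polynomial in number; they are countably infinite, because each near-tangential passage produces crossings of $\partial\IH_k$ for all large $|k|$, and the finite-horizon assumption only bounds the number of \emph{tangencies} per step, not the number of homogeneity cuts. A direct sum over components therefore diverges. The correct way to obtain the $\cZ_{N_1}/\mu(\cM)\le K'/|\widetilde W|$ bound is the comparison-with-uniform-measure device already used in the paper's proof of Lemma~\ref{lem:top_aid}: since $\fm_{W'}$ is trivially regular, Lemma~\ref{lem:Z} applies to it directly, giving $\cZ_{N_1}^{\fm_{W'}}/|W'|\le\tfrac{C_\mathrm{p}}2(1+\vartheta_\mathrm{p}^{N_1}/|W'|)$; then the pointwise sandwich $\inf_{W'}\psi\le\psi\le\sup_{W'}\psi$ yields $\cZ_{N_1}/\mu(\cM)\le(\sup_{W'}\psi/\inf_{W'}\psi)\cdot\cZ_{N_1}^{\fm_{W'}}/|W'|$, and the uniform ratio bound from Step~1 finishes it. With this substitution (which precisely ``absorbs the non-regularity of $\psi$'' the way you anticipated, just without any component count), the rest of Step~3 and the final bookkeeping are sound.
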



\subsubsection{More general initial measures}
Let the initial measure $\mu=\sum_\alpha\nu_\alpha$ be a proper probability measure consisting of countably many regular measured unstable curves, and assume that a fraction of $\mu$ crosses the magnet $\fS_0$ at time $0$ with \eqref{eq:cross_bound} holding 
for some~$\zeta$. As explained in Section~\ref{sec:outline}, precisely  $\zeta$ units of its mass will be coupled to the reference measure~$\widetilde\fm_0$. The remaining measure,
which we denote by $\mu_0$, has mass $1-\zeta$ and consists of the three kinds of measures described earlier. The next result summarizes some of the results above and is very convenient for bookkeeping.

\begin{lem}\label{lem:proper_part}
There exist constants $C\geq 1$, $\lambda\in(0,1)$, and $r>0$ such that, for any $m\geq r$, $(\cF_m)_*\mu_0$ can be split into the sum of two nonnegative measures $\mu_{m}^\mathrm{P}$ and $\mu_{m}^\mathrm{G}$, both consisting of countably many regular measured unstable curves, with the properties that
 $\mu_{m}^\mathrm{P}$ is \emph{proper}, and $\mu_{m}^\mathrm{G}(\cM)=C\lambda^m$.
\end{lem}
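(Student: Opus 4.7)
The plan is to decompose the uncoupled measure $\mu_0$ according to the three types of pieces identified in Section~\ref{sec:recovery}: for every original curve $W_\alpha$ whose image crosses $\fS_0$ properly, the uncoupled part splits into (a) the restriction to the two excess pieces outside the magnet, (b) the ``top'' density $(\rho_\alpha-\check\rho_\alpha)|_{W_\alpha'}$, and (c) the regular densities $\check\rho_\alpha|_V$ on the gaps $V \subset W_\alpha'\setminus\fS_0$; the part of $\mu$ on curves that fail to cross $\fS_0$ properly contributes only type-(a)-like regular pieces and can be absorbed. For each time $m$ I would introduce a rank threshold
$$
R^*(m) := \lfloor m/c_\mathrm{p} - |\log|\widetilde W||\rfloor
$$
and set $\mu^P_m$ to be the $\cF_m$-pushforward of all of type (a), all of type (b), and all type-(c) gaps of rank $R \le R^*(m)$; the pushforward of the gaps of rank $R > R^*(m)$ becomes $\mu^G_m$.

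Showing that $\mu^P_m$ is proper is then a direct invocation of the earlier recovery results. The type-(a) pieces become proper in a bounded number of steps (depending only on $|\widetilde W|$) via the $\cZ$-decay estimate of Lemma~\ref{lem:Z}, since their densities are regular from the outset. The type-(b) pieces become proper within $\bar c_\mathrm{p}+\bar C_\mathrm{p}(|\log|\widetilde W||+|\log\zeta|)$ steps by Lemma~\ref{lem:top_proper}. Each type-(c) gap of rank $R$ becomes proper by time $c_\mathrm{p}(R+|\log|\widetilde W||)$ by Lemma~\ref{lem:gap_proper}, so the choice of $R^*(m)$ guarantees that every gap placed in $\mu^P_m$ has recovered. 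Taking $r$ to be the maximum of these recovery times (and large enough that $R^*(r) \ge 0$), the decomposition is well-defined for $m \ge r$; since $\cZ(\cdot)$ and $\mu(\cdot)$ are both additive in the measure, the countable sum of proper measures is proper. Regularity of all components of $\mu^P_m$ and $\mu^G_m$ follows from Lemma~\ref{lem:pair_image}.

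The remaining task is the exponential mass bound on $\mu^G_m$. The key input is Lemma~\ref{lem:gap_tail}(a): for each $W_\alpha$ contributing to the crossing, the total $\fm_{W_\alpha}$-length of the gaps of rank $\ge R$ is at most $C_\mathrm{g}\Lambda^{-R}|W_\alpha|$. Combining this with the ceiling $\sup \check\rho_\alpha \le \tfrac58 \inf \rho_\alpha$ from Lemma~\ref{lem:rhobar_bound} and the regularity consequence $\inf \rho_\alpha \le \nu_\alpha(\cM)/|W_\alpha|$, the total mass of $\mu^G_m$ is bounded by a constant multiple of $\Lambda^{-R^*(m)}\sum_\alpha \nu_\alpha(\cM)$. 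Substituting $R^*(m)$ absorbs a factor $\Lambda^{|\log|\widetilde W||}$ into a prefactor that depends on $\widetilde W$ (hence on $\widetilde \cK$) but not on the sequence $(\cK_n)$ or on $\mu$; setting $\lambda := \Lambda^{-1/c_\mathrm{p}}$ delivers $\mu^G_m(\cM)\le C\lambda^m$. The stated equality is arranged, if required, by reclassifying a sliver of proper mass as non-proper, which preserves properness of $\mu^P_m$.

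The step I expect to require the most care is the bookkeeping that certifies the pushforward of a single type-(c) gap really is a proper \emph{measured unstable family} by time $c_\mathrm{p}(R+|\log|\widetilde W||)$, rather than merely a countable union of regular measures on possibly short curves. This rests on the growth estimate $|\cF_R V| \ge c_\mathrm{g}\Lambda^{-R}|\widetilde W|$ from Lemma~\ref{lem:gap_tail}(b), which controls the initial value of $\cZ/\nu(\cM)$ on the image of $V$, followed by the $\cZ$-decay inequality of Lemma~\ref{lem:Z}; this is precisely the computation already sketched in the derivation of Lemma~\ref{lem:gap_proper}. Once that is in place, the remaining arithmetic is routine.
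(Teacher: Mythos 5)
Your proposal is correct and follows essentially the same route as the paper: the same three-way decomposition of the uncoupled mass into excess pieces, top densities, and gaps, the same invocation of Lemmas~\ref{lem:Z}, \ref{lem:top_proper}, \ref{lem:gap_proper} for recovery and Lemma~\ref{lem:gap_tail}(a) for the exponential tail, and the same constants $C\sim C_\mathrm{g}''\Lambda^{1+|\log|\widetilde W||}$, $\lambda=\Lambda^{-1/c_\mathrm{p}}$. The only (cosmetic) difference is that you classify gaps by a per-time rank threshold $R^*(m)$, whereas the paper fixes the split at time~$r$ and then evolves $\mu_r^\mathrm{G}$ forward, returning recovered gap mass to $\mu_m^\mathrm{P}$; these are equivalent bookkeeping schemes.
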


The constants $C$ and $r$ in the lemma depend on the reference configuration used in the construction of the magnet (through $|\widetilde W|$) and $r$ also depends on $\zeta$, but neither of them depend on the initial measure $\mu$ or the sequence of configurations. The constant $\lambda$ is uniform.

We will use the notation in Section~\ref{sec:outline} in the proof -- except for omitting the subscript $n$.

\begin{proof}[Proof of Lemma~\ref{lem:proper_part}]
By Lemma~\ref{lem:top_proper}, we can choose $r$ large enough that the $m$-step push-forwards of the densities on the excess pieces and the 
``top'' densities yield proper measures for all $m\geq r$. 
From that point on the question is about the gaps. 
Here we turn to Lemmas~\ref{lem:gap_tail} and~\ref{lem:gap_proper}.  By regularity of the measures~$\nu_{\alpha}$, \eqref{eq:rank_tail} implies
 $\nu_{\alpha}\{x\in W_{\alpha}\,:\, \text{$x$ is in a gap of rank $\geq R$}\} 
\leq C_\mathrm{g}'' \Lambda^{-R}\nu_{\alpha}(W_{\alpha})$ for another uniform constant~$C_\mathrm{g}''>0$. 
Summing over $\alpha$ yields the estimate $\mu_0\{\text{gaps of rank $\geq R$}\} 
\leq C_\mathrm{g}'' \Lambda^{-R}$. With the aid of Lemma~\ref{lem:gap_proper}, we see that the quantity $\mu_0\{\text{gaps needing $\geq m$ steps to yield a proper measure}\}$ is bounded above by the expression
\linebreak
$C_\mathrm{g}'' \Lambda^{-(m/c_\mathrm{p}-|\log|\widetilde W||)+1}\leq  C_\mathrm{g}'' \Lambda^{1+|\log|\widetilde W||}\lambda^m\leq C\lambda^m$ with $C=\max(1,C_\mathrm{g}'' \Lambda^{1+|\log|\widetilde W||})$ and $\lambda=\Lambda^{-1/c_\mathrm{p}}\in(0,1)$. Taking $r$ large, we may assume $C\lambda^r<1-\zeta=\mu_0(\cM)$. We first collect all the gap measures from $(\cF_{r})_*\mu_0$ into~$\mu_{r}^\mathrm{G}$; they have total mass $\leq C\lambda^r$. Next, we take a suitable constant multiple of the remaining, proper, measure from $(\cF_{r})_*\mu_0$ and include it into~$\mu_{r}^\mathrm{G}$ so that finally $\mu_{r}^\mathrm{G}(\cM)=C\lambda^r$ holds exactly. (This is mostly for purposes of keeping the statements clean.) Note that $\mu_{r}^\mathrm{P} = (\cF_{r})_*\mu_0-\mu_{r}^\mathrm{G}$ is proper. By our earlier results, the push-forwards of~$\mu_{r}^\mathrm{P}$ remain proper; these will always be included in the measures $\mu_{m}^\mathrm{P}$ for $m>r$. The real gap measures included in $\mu_{r}^\mathrm{G}$, on the one hand, continue to recover into being proper measures at least at the rate $\lambda$. On the other hand, the proper measures included in $\mu_{r}^\mathrm{P}$ will continue to be proper under push-forwards. Hence, for $m>r$, we return to $\mu_{m}^\mathrm{P}$ a suitable constant multiple of the proper part of $(\cF_{m,r+1})_*\mu_{r}^\mathrm{G}$, as necessary, so that the statements of the lemma continue to hold. 
\end{proof}



\subsection{Uniform mixing}\label{sec:mixing}

We discuss here the primary reason behind the asserted exponential memory loss
for the sequence $(\cK_m)$. 
Since events that occur prior to
couplings are involved, we cannot assume that the coupling of interest occurs at $n=0$, 
as was done in Sections~\ref{sec:magnets}--\ref{sec:recovery}.
Our goal is to address item (1) in
Paragraph D of the Synopsis.

Recall from Section~\ref{sec:magnets} that given a configuration $\widetilde\cK\in\IK$,
there exist unstable manifolds $\widetilde W\subset \widetilde W^u$ of 
$\widetilde F = F_{\widetilde \cK,\widetilde \cK}$, $s' \in \mathbb Z^+$ and
$\ve'>0$ such that for any sequence $(\cK_m)_{m\geq 0}$ with
$\cK_{n},\dots,\cK_{n+s'}\in \cN_{\ve'}(\widetilde\cK)$, a magnet $\fS_n$ with desirable
properties (see Sections~\ref{sec:magnets} and \ref{sec:gaps}) can be constructed out of 
$\widetilde W$ and stable manifolds for $(F_{n+m})_{m \ge 1}$. We assume for each
$\widetilde \cK$ that $s', \ve'$ and $\widetilde W \subset \widetilde W^u$ are fixed.

\begin{prop}\label{prop:magnets}
Given $\widetilde\cK\in\IK$, there exist $\zeta>0$, $\ve \in (0, \ve')$ 
and $s \in \mathbb Z^+$ such
that the following holds for every $(\cK_m)_{m\geq 0}$ with 
$\cK_0,\dots,\cK_{s+s'}\in \cN_{\ve}(\widetilde\cK)$: Let $\fS_s$ be the magnet
defined by $\widetilde W$ and $(F_{s+m})_{m \ge 1}$.  Then every regular measured unstable curve $(W,\nu)$ with $|W|\geq (2C_\mathrm{p})^{-1}$ has the property
that if $W_{s,i}=\cF_{s}(W_{s,i}^-)$ are the homogeneous components of 
$\cF_{s}(W)$ which cross~$\fS_s$ properly, then
\beq\label{eq:cross_fraction}
\sum_i(\cF_{s})_*(\nu|_{W_{s,i}^-}) (W_{s,i}\cap\fS_s) \geq 4\zeta e^{C_\mathrm{r}}\,
\nu(W).
\eeq
\end{prop}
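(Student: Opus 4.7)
My plan is to combine three ingredients: a mixing/equidistribution statement for the fixed map $\widetilde F = F_{\widetilde\cK,\widetilde\cK}$, a perturbative finite-time comparison between $\cF_s$ and $\widetilde F^s$ via Lemma~\ref{lem:map_cont}, and the magnet construction from Section~\ref{sec:magnets}. Concretely, I would first establish the following fixed-map statement: for $s$ sufficiently large and every regular unstable curve $V$ with $|V|\geq (2C_\mathrm{p})^{-1}$, the $\widetilde F^s$-image of $V$ contains a family of homogeneous components each crossing $\widetilde W$ transversally with excess pieces of length $\geq 2|\widetilde W|$, and such that the total $\fm_V$-measure of the pre-images of these components is bounded below by some $\eta_0>0$ depending on $\widetilde W$ but not on $V$. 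This is the standard quantitative mixing statement for classical dispersing billiards, available from exponential correlation decay plus uniform hyperbolicity (Lemma~\ref{lem:cones}); one extracts it by localising to pieces of a Markov-type partition for $\widetilde F$ and applying the Growth Lemma (Lemma~\ref{lem:growth}) to discard mass on short components.

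The second step is a perturbative transfer from $\widetilde F^s$ to $\cF_s$. By Lemma~\ref{lem:map_cont}, for any compact $E\subset\cM$ bounded away from the $s$-step singularity set of $\widetilde F$, the composition $\cF_s$ agrees uniformly with $\widetilde F^s$ on $E$ whenever $\cK_0,\dots,\cK_s\in \cN_\ve(\widetilde\cK)$ and $\ve$ is sufficiently small. A second application of the Growth Lemma, this time to $\nu|_W$, gives a uniform $O(\delta)$ bound on the $\nu$-measure of the $\delta$-neighborhood of any singularity curve encountered within the first $s$ iterates, uniformly in the sequence. A standard choose-$s$-then-$\delta$-then-$\ve$ argument then yields: at least a fraction $\eta_0/2$ of $\nu|_W$ is carried by $\cF_s$ onto homogeneous components of $\cF_s W$ which are long, $C^1$-close to their $\widetilde F^s$-counterparts, and cross $\widetilde W$ transversally with excess pieces of length $\geq 3|\widetilde W|/2$. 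Regularity of these components is provided by Lemma~\ref{lem:curvature}, and the distortion bound (Lemma~\ref{lem:distortion}) makes the $\nu$-mass on each component uniformly comparable to the $\fm$-mass of its pre-image.

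The final step upgrades a crossing of $\widetilde W$ to a \emph{proper} crossing of $\fS_s$. The magnet $\fS_s = \bigcup_{x\in\widetilde W_s}W^s_s(x)$ consists of local stable manifolds for the sequence $(F_{s+m})_{m\ge 1}$ emanating from a Cantor set $\widetilde W_s\subset\widetilde W$, which by our choice of $\widetilde W$, $s'$, and Lemma~\ref{lem:fundamental} (with $A=1/2$, $a=1/100$) has $\fm_{\widetilde W}$-density $\geq 99/100$ in $\widetilde W$, with each $W^s_s(x)$ extending at least $|\widetilde W|/2$ units to each side. A curve that is $C^1$-close to $\widetilde W$ and whose excess pieces extend beyond $\widetilde W$ by $\geq |\widetilde W|$ units will, after one final tightening of $\aleph$ and $\ve$, meet every such $W^s_s(x)$ at a point within distance $\aleph|\widetilde W|$ of $\widetilde W$; this is exactly the proper-crossing condition of Definition~\ref{defn:proper_crossing}. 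Combining with the mass lower bound from the previous paragraph yields \eqref{eq:cross_fraction} with $\zeta := e^{-C_\mathrm{r}}\eta_0/8$.

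The main obstacle, as anticipated in item~(1) of Paragraph~D of the synopsis, is not the uniform hyperbolic behaviour itself but the discontinuity structure: the singularity set of $\cF_s$ genuinely depends on the sequence $(\cK_m)$, so naive uniform continuity of the orbit map fails near singularities, and a point of $W$ whose $\widetilde F$-orbit stays in continuity domains may under a nearby sequence yield a different homogeneous component of $\cF_s W$. The quantitative fix comes from the Growth Lemma, which bounds the $\nu$-measure of the $\delta$-neighborhood of the (sequence-dependent) singularity set uniformly by $O(\delta)$. This trade-off — fix $s$ first, then $\delta$ small enough to leave an $\eta_0/2$ margin, then $\ve$ small enough that Lemma~\ref{lem:map_cont} forces $\delta$-closeness on the complement — is what determines the final $\ve = \ve(\widetilde\cK)$ and $\zeta = \zeta(\widetilde\cK)$.
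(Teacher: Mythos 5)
Your overall strategy is the same as the paper's: quote the known quantitative mixing statement for the fixed map $\widetilde F$ on unstable curves of length $\geq (2C_\mathrm{p})^{-1}$, perturb from $\widetilde F^s$ to $\cF_s$ after fixing $s$, then use the magnet geometry of Section~\ref{sec:magnets} to conclude. The one place where you take a genuinely different route is the perturbation step. The paper does not discard mass near singularities via the Growth Lemma; instead it picks, inside the preimage $\widetilde V=\widetilde F^{-s}W^{\text{super}}_{s,i}$ of each super-properly crossing component, an explicit subcurve $V$ with slightly smaller excess ($\tfrac53|\widetilde W|$ instead of $2|\widetilde W|$), and then uses Lemma~\ref{lem:growth_bound} to see that all intermediate images $\widetilde F^m\widetilde V$ ($m<s$) live in boundedly many homogeneity strips, so that the distortion bound (Lemma~\ref{lem:distortion}) yields a \emph{uniform} $\delta>0$ separation between $\widetilde F^m(V)$ and the bad set. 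This produces, for each good $\widetilde F^s$-component, a specific subcurve $V$ whose $\cF_s$-image is a single homogeneous curve $C^0$-close to $\widetilde F^s(V)$, with no loss of mass at all. Your Growth-Lemma route (choose $s$, then $\delta$, then $\ve$) can be made to work, but it needs the extra argument (missing in your write-up) that for a point $x$ whose $\widetilde F^m$-orbit stays $\delta$-clear of $\cS_{\widetilde\cK,\widetilde\cK}\cup\bigcup_k\partial\IH_k$, the $\cF_m$-orbit also stays clear of the \emph{sequence-dependent} singularity set, and that the $\cF_s$-component of $\cF_s W$ through $\cF_s x$ stays comparable in length to its $\widetilde F^s$-counterpart; neither is purely a continuity statement, since you are matching different decompositions. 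Also, Lemma~\ref{lem:map_cont} gives only $C^0$-control on $F_{\cK',\cK}$; the $C^1$-closeness of images you invoke must instead be deduced indirectly from $C^0$-closeness together with the uniform curvature bound on regular unstable curves, as the paper does in the discussion of Definition~\ref{defn:proper_crossing}.

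There is also a concrete omission in your last step. Showing that the components $W_{s,i}$ cross $\fS_s$ \emph{properly} is not by itself enough to conclude \eqref{eq:cross_fraction}: the left-hand side is a $\nu$-mass on $W_{s,i}\cap\fS_s$, a Cantor subset of $W_{s,i}$, and you need a uniform lower bound on $\fm_{W_{s,i}}(W_{s,i}\cap\fS_s)$. This is exactly where the absolute continuity of the local stable foliation (Lemma~\ref{lem:Jac_holonomy}), combined with $\fm_{\widetilde W}(\fS_s\cap\widetilde W)\geq\tfrac{99}{100}|\widetilde W|$ and the holonomy Jacobian bound of Definition~\ref{defn:proper_crossing}, enters: it gives $\fm_{W_{s,i}}(W_{s,i}\cap\fS_s)\gtrsim |\widetilde W|$. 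Without that input your stated $\zeta=e^{-C_\mathrm{r}}\eta_0/8$ cannot be justified, since the $e^{-C_\mathrm{r}}$ factor only accounts for the spread of the regular density $\rho_{s,i}$ on $W_{s,i}$, not for the fraction of $W_{s,i}$ actually lying inside the magnet; a uniform constant multiple $\fm_{W_{s,i}}(\fS_s)/|W_{s,i}|$ is still missing. This does not invalidate the proposition (the constant is merely smaller), but you should make the absolute-continuity step explicit.
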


This proposition asserts that starting 
from \emph{an arbitrary} regular measured unstable curve $(W,\nu)$ with $|W|\geq (2C_\mathrm{p})^{-1}$, at least a \emph{uniform} fraction of its \mbox{$\cF_s$-image} has sufficiently many (homogeneous) components crossing the magnet $\fS_s$ \emph{properly} provided $\cK_m$ remains in a sufficiently small neighborhood $\cN_\ve(\widetilde\cK)$ of the reference configuration 
$\widetilde\cK$ for time $\le s+s'$. The last $s'$ steps is used to make sure that the magnet has a high density of sufficiently long local stable manifolds (see Sections~\ref{sec:LSM} and~\ref{sec:magnets}), whereas the first $s$ is directly related to the mixing property
of $\widetilde F$.

Note the uniformity in Proposition~\ref{prop:magnets}: the constants depend 
on $\widetilde\cK$,  with $s, \zeta$ and $\ve$ depending also on the choice of $\widetilde W$;
but $(\cK_m)_{m\geq 0}$ is arbitrary as long as it satisfies the conditions above.

\begin{proof}[Proof of Proposition~\ref{prop:magnets}]
We begin by recalling the following known result on the mixing property of $\widetilde F^n$
(see e.g. \cite{ChernovMarkarian_2006}):
Let $\widetilde \fS$ be the magnet defined by $\widetilde W$ and powers of 
$\widetilde F$, and let $\sigma>0$ be given. Then there exist $s>0$ (large) and $\zeta'>0$ (small) 
such that for any regular measured unstable curve $(W,\nu)$ with $|W|\geq (2C_\mathrm{p})^{-1}$ and any $n\geq s$, 

\smallskip
(i) finitely many  components of $\widetilde F^n W$ cross $\widetilde \fS$ \emph{super}-properly and

(ii) denoting these components $W_{n,i}^\text{super}$, 
\beq\label{eq:super_cross_fraction}
\sum_i (\widetilde F^n)_*\nu\,(W_{n,i}^\text{super}\cap\fS^{\widetilde\cK}) \geq \zeta'\nu(W)\ .
\eeq
Here super-proper crossing means that the crossing is proper with room to spare.  
Specifically, the excess pieces are twice as long (i.e., at least $2|\widetilde W|$ units),
 and $|\varphi_{W_{n,i}^\text{super}} - \varphi_{\widetilde W} | < \sigma$, where
  $\widetilde W$ extended by $\frac{1}{10} |\widetilde W|$ along $\widetilde W^u$
  on each side is the graph of $\varphi_{\widetilde W}$ as a function of $r$, 
  and $W_{n,i}^\text{super}$ suitably restricted is the graph
of $\varphi_{W_{n,i}^\text{super}}$ defined on the same $r$-interval.

\medskip
Let $\sigma$ be such that for any unstable curve $U$,
$|\varphi_U - \varphi_{\widetilde W} | < 2 \sigma$ implies that condition (ii) in the definition
of proper crossing (Definition \ref{defn:proper_crossing})   is satisfied by $U$ independently
of the maps used to define the stable manifolds in the magnet (provided all configurations are
in $ \IK$). We have used here the fact that there
are uniform stable and unstable cones and that they are bounded away
from each other. Let $s$ be given by the above result for $\widetilde F$. Our next step is to view $\cF_s$ as a perturbation of $\widetilde F^s$, and to argue that the following holds for $\ve$ sufficiently small: 
Let $(W, \nu)$ be as in
the proposition, and suppose $W_{s,i}^\text{super}\subset \widetilde F^s W$ 
crosses $\widetilde \fS$ super-properly.  Then for every $(\cK_m)$ with
$\cK_m \in \cN_{\ve}(\widetilde \cK)$ for $m \le s+s'$, 
there exists a subcurve $V\subset \widetilde V =\widetilde F^{-s}W_{s,i}^\text{super}\subset W$ 
such that $\cF_s V$ 
crosses $\fS_s$ properly.

First observe the following facts about $\widetilde F^m(\widetilde V)$: 
(a) There exists $\bar k \in \IN$ independent of $W$ such that 
$\widetilde F^m \widetilde V \subset\cup_{|k|\leq \bar k}\IH_k\setminus \cS_{\widetilde\cK,\widetilde\cK}$ for $0\leq m < s$. This is because for each $m$,
$\widetilde F^m \widetilde V$ is contained in a homogeneity strip $\IH_k$,
and would be arbitrarily short if $k$ was arbitrarily large, and that is not possible
by Lemma~\ref{lem:growth_bound} since $|\widetilde F^s(\widetilde V)| 
\gtrsim 5|\widetilde W|$. (b) Let $V \subset \widetilde V$ be the subsegment
with the property that $\widetilde F^s(V)$ crosses $\widetilde \fS$ and the excess pieces 
have length $\frac53 |\widetilde W|$. By uniform distortion bounds (Lemma \ref{lem:distortion})
and (a) above, there exists $\delta>0$ independent of $W$ or $\widetilde V$
such that for $1 \le m < s$, $\widetilde F^m(V)$ 
has distance $>\delta$ from
$\cup_{|k|\leq \bar k}\partial \IH_k \cup \cS_{\widetilde\cK,\widetilde\cK}$.

We wish to choose $\ve$ small enough that (i) for each $m=1,\dots, s$,
$\cF_m(V)$ and $\widetilde F^m(V)$ differ in Hausdorff distance by $<\frac12 \delta$,
and (ii) each of the excess pieces of $\cF_s(V)$ are $> \frac43 |\widetilde W|$ 
in length, and $|\varphi_{\cF_s(V)} -\varphi_{\widetilde W}|<2\sigma$. The purpose of (i) is to ensure that $\cF_s(V)$ is a single homogeneous component, and (ii) is intended to ensure
proper crossing, the $\frac43$ providing some room to accommodate the slight difference between $\fS_s$ and $\widetilde \fS$
(the ``end points" of $\fS_s \cap \widetilde W$ and $\widetilde \fS \cap \widetilde W$
may differ by $\frac{99}{100}|\widetilde W|$).
It is straightforward to check that (i) and (ii) are assured if each of the constituent 
maps~$F_m$ is sufficiently close to $\widetilde F$ in $C^0$-distance
and $\widetilde F$ has a uniformly bounded derivative on the relevant domain (which is bounded away from the bad set). These properties can be guaranteed by  
taking~$\ve$ small.

\medskip
To finish the proof, it suffices to show that there exists a constant $c>0$ (not depending
on $(\cK_m)$ or on $W$) such that if $W_{s,i}^\text{super}$ and $V \subset \widetilde F^{-s}
W_{s,i}^\text{super}$ are as above, then 
$$
(\cF_s|_{V})_*\nu\,(\cF_s V\cap \fS_s) \geq c \cdot (\widetilde F^{s})_*\nu\,(
\widetilde F^{s} V\cap \widetilde \fS)\ .
$$
By Lemma \ref{lem:distortion}, 
\beqn
(\cF_s|_{V})_*\nu\,(\cF_s V\cap \fS_s) \geq \nu(V) e^{-C_\mathrm{r}} {\fm_{\cF_s V}(\fS_s)}/{|\cF_s V|}
\eeqn
and
\beqn
(\widetilde F^{s})_*\nu\,(\widetilde F^{s} V\cap \widetilde \fS) \leq \nu(V)e^{C_\mathrm{r}} {\fm_{\widetilde F^{s} V}(\widetilde \fS})/{|\widetilde F^{s} V|}.
\eeqn
Since $|\widetilde F^{s} V|\geq \fm_{\widetilde F^{s} V}(\widetilde \fS)$ and $|\cF_s V|$ is uniformly bounded from above, it remains to show that $\fm_{\cF_s V}(\fS_s)$ is uniformly
bounded from below, and that is true by the absolute continuity of stable manifolds
in $\fS_s$ (Lemma~\ref{lem:Jac_holonomy}) and the fact that $\fm_{\widetilde W}(\fS_s \cap 
\widetilde W) \ge \frac{99}{100}|\widetilde W|$.
\end{proof}

We remark that $s$ and $\ve$ in Proposition \ref{prop:magnets} 
depend strongly on $\widetilde \cK$ but are independent of $(\cK_m)$ or $W$. 
The argument is a perturbative one, and it is feasible only because
it does not involve more than a finite, namely $s$, number of iterates.
We remark also that stronger estimates on $\zeta$ than the one above can probably
be obtained by leveraging the $C^1$ proximity of $F_m$ to $\widetilde F$,

The next result extends Proposition \ref{prop:magnets} to more general initial measures
and coupling times. 
See Sections~\ref{sec:stacks} and~\ref{sec:primed_statements} for definitions.

\begin{cor}\label{cor:magnets} Let $\widetilde \cK$ be fixed, and let 
$s, s', \ve$ and $\zeta$ be as in Proposition~\ref{prop:magnets}. Then the following holds for every $n \in \IZ^+$ with $n \ge s+n_p$ and
every sequence $(\cK_m)$ satisfying $\cK_m \in \cN_\ve(\widetilde \cK)$ for $n-s \le m \le n+s'$:
Let $\mu$ be an initial probability measure that is regular on unstable curves and proper 
(i.e., $\cZ<C_\mathrm{p}$). Then
\eqref{eq:cross_bound} holds for $(\cF_n)_*\mu$ with $\zeta_1=\zeta$. 
\end{cor}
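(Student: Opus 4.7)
The plan is to reduce Corollary~\ref{cor:magnets} to Proposition~\ref{prop:magnets} by first absorbing $n-s$ steps of the dynamics, in order to redistribute the mass onto sufficiently long regular unstable curves, and then applying Proposition~\ref{prop:magnets} fiberwise along the shifted sequence $F_{n-s+1}, F_{n-s+2}, \dots, F_{n+s'}$.

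First I would invoke the regularity and properness preservation along the flow: since $\mu$ is regular on unstable curves and proper at time $0$, Lemma~\ref{lem:pair_image} says the canonical $(n-s)$-step subdivision produces a regular measured unstable family, and Lemma~\ref{lem:Z} together with the discussion after Definition~\ref{defn:proper} ensures that $\mu_\star:=(\cF_{n-s})_*\mu$ satisfies $\cZ_{n-s}<C_\mathrm{p}\,\mu(\cM)=C_\mathrm{p}$, since $n-s\geq n_\mathrm{p}$ by hypothesis. Applying \eqref{eq:proper_curves} to $\mu_\star$, at least half of its mass is carried by regular unstable curves of length $\geq(2C_\mathrm{p})^{-1}$. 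Let $\tilde\mu$ denote the restriction of $\mu_\star$ to this long-curve part, so that $\tilde\mu(\cM)\geq 1/2$.

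Next, by hypothesis $\cK_m\in\cN_\ve(\widetilde\cK)$ for $n-s\leq m\leq n+s'$, which is exactly the $s+s'$-step proximity window required by Proposition~\ref{prop:magnets} when applied to the shifted sequence starting at time $n-s$, with the magnet $\fS_n$ constructed from $\widetilde W$ and $(F_{n+m})_{m\geq 1}$. In the countable case, write $\tilde\mu=\sum_\alpha \tilde\nu_\alpha$ with each $\tilde\nu_\alpha$ a regular measure supported on a regular unstable curve of length $\geq(2C_\mathrm{p})^{-1}$. Proposition~\ref{prop:magnets} applied to each such $(W_\alpha,\tilde\nu_\alpha)$ yields homogeneous components of $\cF_{n,n-s+1}(W_\alpha)$ crossing $\fS_n$ properly with total mass $\geq 4\zeta e^{C_\mathrm{r}}\tilde\nu_\alpha(W_\alpha)$. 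Identifying each such component with one of the pieces $W_{\alpha',n,i}$ of the canonical $n$-step subdivision of the original curves supporting $\mu$, and summing over $\alpha$, we obtain
\beqn
\sum_{\alpha'\in\cA}\sum_{i\in\cI_{\alpha',n}}\nu_{\alpha',n,i}(W_{\alpha',n,i}\cap\fS_n)\geq 4\zeta e^{C_\mathrm{r}}\,\tilde\mu(\cM)\geq 2\zeta e^{C_\mathrm{r}},
\eeqn
which is precisely \eqref{eq:cross_bound} with $\zeta_1=\zeta$.

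The general (continuous) case is handled by running the same argument fiberwise along the conditional measures on each stack, and integrating against the transverse measures $P^{(j)}$. The only real subtlety is the measurability of "crosses properly" as a function of the stack parameter $\alpha$; but proper crossing in the sense of Definition~\ref{defn:proper_crossing} is an open condition on the $C^1$-jet of the curve $\cF_{n,n-s+1}(W_\alpha^{(j)})$ relative to the stable manifolds defining $\fS_n$, and by definition of an unstable stack the parametrization $\psi$ is a homeomorphism (indeed the maps $\cF_m\circ\psi$ depend continuously on $\alpha$ away from the discontinuity set), so the relevant sets of parameters are Borel and the fiberwise estimate integrates to the same conclusion.
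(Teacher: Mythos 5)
Your proof is correct and follows essentially the same route as the paper's: push forward $n-s$ steps, use $n-s\ge n_\mathrm{p}$ to recover properness of $(\cF_{n-s})_*\mu$, invoke \eqref{eq:proper_curves} to get at least half the mass on unstable curves of length $\ge(2C_\mathrm{p})^{-1}$, and apply Proposition~\ref{prop:magnets} curve-by-curve to obtain \eqref{eq:cross_bound} with $\zeta_1=\zeta$ (the factor $4$ in \eqref{eq:cross_fraction} absorbing the factor $\tfrac12$). Your remarks on measurability in the continuous case are sound but go beyond the paper's own two-line proof, which the authors state for the countable case and defer the continuous refinements to Section~\ref{sec:cont_proof}.
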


\begin{proof} The discussion immediately following Definition \ref{defn:proper}  is relevant
here. Since $n-s \ge n_p$, the measure $(\cF_{n-s})_*\mu$ is again proper; thus at least half of $(\cF_{n-s})_*\mu$
can be disintegrated into measures supported on regular unstable curves of length $\ge (2C_p)^{-1}$. 
Then Proposition \ref{prop:magnets} can be applied, giving \eqref{eq:cross_fraction}
with a factor of~$\frac12$ on the right side.
\end{proof}


\section{Proof of Theorem~\ref{thm:weak_conv}: the countable case}\label{sec:proof_countable}

The purpose of this section is to go through the proof of the {\it countable case} of 
Theorem~\ref{thm:weak_conv} from beginning to end, connecting the individual ingredients 
discussed in the last section. For initial distributions,
we start from the most general kind permitted in this paper, namely those introduced in Section~\ref{sec:primed_statements} under Theorem~1'.  It was observed in 
Section~\ref{sec:preliminariesII} that by delaying the first coupling, 
each initial distribution can be assumed to be regular on unstable curves and proper.
For simplicity, we will start from that. Also, as noted before,
it suffices to consider a single initial distribution, for the two measures will be coupled
to reference measures and hence to each other.


\subsection{Coupling times}  \label{sec:couplingtimes}
To each \mbox{$\widetilde \cK\in\IK$}, we first assign values to the constants $\tilde\ve(\widetilde\cK)$ and $\widetilde N(\widetilde\cK)$ appearing in the formulation of the theorem. Namely, we set $\tilde\ve(\widetilde\cK)=
\ve, s(\widetilde\cK)=s$ and $s'(\widetilde\cK)=s'$ where $\ve, s$ and $s'$ are as in Proposition~\ref{prop:magnets}, and let $r=r(\widetilde\cK)$ be the maximum of the similarly named constant in Lemma~\ref{lem:proper_part} and of $s'(\widetilde\cK)$. We then set $\widetilde N(\widetilde\cK) = s(\widetilde\cK)+r(\widetilde\cK)$. For future use, let $\zeta=\zeta(\widetilde\cK)$ be as in Proposition~\ref{prop:magnets}, and $C=C(\widetilde\cK)$ and $\lambda$ as in Lemma~\ref{lem:proper_part}. 

Next, we fix reference configurations $(\widetilde\cK_q)_{q=1}^Q$ with
 $\widetilde \cK_{q+1}\in \cN_{\tilde\ve(\widetilde \cK_q)}(\widetilde\cK_{q})$ for 
 $1\leq q< Q$ and a sequence $(\cK_n)_{n=0}^N$ adapted to $(\widetilde \cK_q,\tilde\ve(\widetilde \cK_q),\widetilde N(\widetilde \cK_q))_{q=1}^Q$. Such a sequence
is admissible. If the sequence is finite ($N<\infty$), augment it to an infinite one by setting $\cK_n=\cK_N$ for all $n>N$
 (so stable manifolds are well defined). 

The following are considerations in our choice of coupling times. 
\begin{itemize}
\item[(a)] Suppose the $k$th coupling occurs at time $t_k$ and $n_{q-1} \le t_k \le n_q$. 
We require that for $m \in [t_k-s(\widetilde \cK_q), t_k+r(\widetilde \cK_q)]$ ,
$\cK_m \in \cN_{\tilde\ve(\widetilde\cK_q)}(\widetilde \cK_q)$.

\item[(b)] There exists $\Delta=\Delta((\widetilde\cK_q)_{q=1}^Q)$ such that $t_{k+1}-t_k 
\le \Delta$.

\item[(c)] There exists $\Delta_0=\Delta_0((\widetilde\cK_q)_{q=1}^Q)>0$ such that 
$t_{k+1}-t_k \ge \Delta_0$.
\end{itemize}
The reasons for (a) are explained in Sections~\ref{sec:magnets}--\ref{sec:mixing}. The purpose of (b) is to 
ensure the exponential estimate in the theorem: at most a fraction $\zeta$
of the still uncoupled measure is matched \emph{per coupling}. The reason for (c) is a little more subtle: it is not necessarily advantageous to couple as often as one can, because
each coupling matches a $\zeta$-fraction of the measure that is available for
coupling, but renders at the same time a fraction $C\lambda^m$ of it improper, hence unavailable 
for coupling in the near future. Intuitively at least, it may be meaningful, especially if
$C$ and $\lambda$ are large, to wait till a sufficiently large part of the uncoupled measure
has recovered, i.e., has rejoined $\mu^{\rm P}_m$, before performing the next coupling.
This is discussed in more detail in Section~\ref{sec:bookkeeping}.

There are many ways to choose~$t_k$. Postponing the choice of~$\Delta_0$ to Section~\ref{sec:bookkeeping}
(and assuming for now it is a preassigned number), an algorithm may go as follows: 
Start by fitting into each time interval $(n_{q-1}, n_q), q=1, \dots, Q$, as many disjoint subintervals of length $s(\widetilde \cK_q)+r(\widetilde \cK_q)$ as one can; by definition,
at least one such interval can be fitted into each $(n_{q-1}, n_q)$. Label these intervals
as $J_i=[t'_i-s(\widetilde \cK_q), t'_i+r(\widetilde \cK_q)]$ with $t'_1< t'_2< \cdots$.
This is not quite our desired sequence of coupling times yet, as it need not respect (c)
above. To fix that, we let $t_1=t'_1$, and 
let $t_2= t'_i$ where $i>1$ is the smallest integer such that $t_2 -t_1 \ge \Delta_0$.
Continuing, we let $t_3 = t'_i$ where $i$ is the smallest number such that
$t'_i>t_2$ and $t_3 -t_2 \ge \Delta_0$, and so on. Then~(a) is satisfied
by definition, and we check that (b) is satisfied with
$$\Delta = 2 \max_{1\leq q\leq Q} s(\widetilde\cK_q) + 2\max_{1\leq q\leq Q} r(\widetilde\cK_q) + \Delta_0\ .$$

\subsection{The coupling procedure and recovery of densities}\label{sec:procedure} We review the procedure briefly, setting some
notation at the same time. Once the coupling times $t_k$
have been
fixed, we construct, for each $k$, a magnet ~$\fS_{t_k}$ using the reference 
configuration $\widetilde \cK_q$ if 
$n_{q-1} < t_k < n_q$. Let $r_k=r(\widetilde\cK_{q})$, $s_k=s(\widetilde\cK_{q})$, $\zeta_k=\zeta(\widetilde\cK_q)$, and $C_k=C(\widetilde\cK_q)$.

Suppose at time $n=t_k-s_k$ we have at our disposal a \emph{proper} measure $\tilde\mu_k$ with  total mass $P_k=\tilde\mu_k(\cM)$, ready to be used in the $k$th coupling.
In particular, $\tilde\mu_1=\mu$ and $P_1 = 1$, since the initial probability measure $\mu$ is assumed to be proper. By Corollary~\ref{cor:magnets}, a $\zeta_k$-fraction of
$(\cF_{t_k,t_k-s_k+1})_*\tilde\mu_k$ is coupled to the reference measure~$\widetilde \fm_{t_k}$ on $\fS_{t_k}$. In the language of Section~\ref{sec:outline}, Paragraph A, 
$\bar\mu_{t_k}$ is the part of $\mu$ such that 
$(\cF_{t_k})_*\bar\mu_{t_k}$ is equal to the part of $(\cF_{t_k,t_k-s_k+1})_*\tilde\mu_k$ coupled
at time $t_k$; in particular, 
\beq\label{eq:coupled}
\bar\mu_{t_k}(\cM) = \zeta_k P_k.
\eeq 
The uncoupled part of $(\cF_{t_k,t_k-s_k+1})_*\tilde\mu_k$ consist of a countable family of measured unstable curves, including arbitrarily short gaps among others, as discussed several times earlier, and has total mass $(1-\zeta_k)P_k$. With the aid of 
Lemma~\ref{lem:proper_part} (with $\tilde\mu_k/P_k$ in the role of $\mu$), we identify
out of its push-forward under $\cF_{t_k+m,t_k+1}$  a \emph{proper} part,  and call the rest the ``non-proper" part, with  the latter rejoining the first at a certain rate. Deviating from the notation of the lemma in order not to overburden the notation, we denote these parts $\tilde\mu_{k,m}^\mathrm{P}$ and $\tilde\mu_{k,m}^\mathrm{G}$, respectively (``$\mathrm{P}$" for proper,
``$\mathrm{G}$" for gap). It can be 
arranged so that  
\beq\label{eq:proper_gap_split}
\tilde\mu_{k,m}^\mathrm{P}(\cM) = (1-\zeta_k-C_k\lambda^m)P_k 
\quad\text{and}\quad
\tilde\mu_{k,m}^\mathrm{G}(\cM) =  C_k\lambda^m P_k
\qquad (m\geq r_k).
\eeq

\subsection{Bookkeeping and exponential bounds}\label{sec:bookkeeping}

Letting $u_k = (t_{k+1}-s_{k+1})-t_k \ge r_k$, the total mass $P_{k+1}$ of the proper measure available for coupling at time $t_{k+1}$ satisfies
\beqn
P_{k+1} = \tilde\mu_{k,u_k}^\mathrm{P}(\cM) + \sum_{j=1}^{k-1}\!\left(\tilde\mu_{j,t_{k-1}+u_{k-1}-t_j}^\mathrm{G}(\cM)-\tilde\mu_{j,t_k+u_k-t_j}^\mathrm{G}(\cM)\right).
\eeqn
The first term comes directly from the $k$th coupling, as explained above. In the second term we take into account the fact that at the $j$th coupling, $1\leq j<k$, some measure was deposited into the ``non-proper'' part, and what remains of that part at a later time $n$ is the measure $\tilde\mu_{j,n-t_j}^\mathrm{G}$. Thus this sum represents the total mass that
was not available for the $k$th coupling but has become available for the $(k+1)$st.
Plugging in the numbers from \eqref{eq:proper_gap_split}, we obtain
\beq\label{eq:P_recursion}
\begin{split}
P_{k+1} &= (1-\zeta_k-C_k\lambda^{u_k}) P_k + \sum_{j=1}^{k-1} C_j\lambda^{t_{k-1}-t_j+u_{k-1}} (1-\lambda^{u_k+s_k}) P_j  .
\end{split}
\eeq
We also have the following expression for  the total mass that remains uncoupled immediately after the $k$th coupling, i.e., $\mu_{t_k}(\cM)$ in the language of Section~\ref{sec:outline}, Paragraph A:
\beq\label{eq:remainder}
\begin{split}
\mu_{t_k}(\cM)
&=(1-\zeta_k)P_k +  \sum_{j=1}^{k-1} C_j\lambda^{t_{k-1}-t_j+u_{k-1}} P_j.
\end{split}
\eeq
Here the first term is the measure that was ``eligible" for coupling at time $t_k$ but was
not coupled, and the second sum consists 
of terms coming from earlier couplings that at time $t_{k-1}+u_{k-1}=t_k-s_k$ were still not ready to be coupled.

\medskip
Notice that \eqref{eq:P_recursion} is a recursion relation for the sequence $(P_k)$ with 
the initial condition $P_1=1$.  We need to show that $P_k$  tends to zero exponentially 
with $k$.

\begin{lem}\label{lem:P_bound} Let $\tilde\zeta = \min_{1\leq q\leq Q}\zeta(\widetilde\cK_q)$. Suppose 
$$
\Delta_0 = \left\lceil\log\Bigl(\tfrac12\tilde\zeta(1-\tilde\zeta)\big/\max_{1\leq q\leq Q} C(\widetilde\cK_q)\Bigr)/\log \lambda\right\rceil + \max_{1\leq q\leq Q} s(\widetilde\cK_q) 
+ n_\mathrm{p}.
$$
Then a coupling strategy satisfying (a) and (c) in Section \ref{sec:couplingtimes} will produce
a sequence of $P_k$ with   
\beq\label{eq:P_bound}
P_k\leq (1-\tfrac12\tilde\zeta)^k \qquad \forall \ k \ge 1\ .
\eeq
\end{lem}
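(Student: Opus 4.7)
The strategy is a strong induction on $k$, feeding into the recursion \eqref{eq:P_recursion} two elementary estimates built into the formula for $\Delta_0$:
\beqn
\tilde C\,\lambda^{\Delta_0 - \tilde s} \;\leq\; \tfrac12\, \tilde\zeta\,(1-\tilde\zeta)
\qquad\text{and}\qquad
a \;:=\; \lambda^{\Delta_0} \;\leq\; \tfrac12\, \tilde\zeta,
\eeqn
where $\tilde C := \max_{1\leq q\leq Q} C(\widetilde \cK_q) \geq 1$ and $\tilde s := \max_{1\leq q\leq Q} s(\widetilde\cK_q)$. The first bound follows directly from the $\lceil\cdot\rceil$-term (and $\lambda^{n_\mathrm{p}}\leq 1$), and the second then follows because $\tilde C \geq 1$ and $1-\tilde\zeta \leq 1$. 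By condition~(c) of Section~\ref{sec:couplingtimes} and the identity $t_{k+1}-t_k = u_k + s_{k+1}$, we have $u_k \geq \Delta_0 - \tilde s$, so $C_k\lambda^{u_k} \leq \tfrac12\tilde\zeta(1-\tilde\zeta)$, and in particular $(1-\zeta_k - C_k\lambda^{u_k}) \leq 1 - \tilde\zeta$. Iterating the same spacing, $t_{k-1}-t_j+u_{k-1} = t_k - s_k - t_j \geq (k-j)\Delta_0 - \tilde s$, which yields $C_j\lambda^{t_{k-1}-t_j+u_{k-1}} \leq \tfrac12\tilde\zeta(1-\tilde\zeta)\, a^{k-j-1}$ for every $j<k$.

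Inserting these bounds into \eqref{eq:P_recursion} and using $1-\lambda^{u_k+s_k}\leq 1$ reduces the recursion to
\beqn
P_{k+1} \;\leq\; (1-\tilde\zeta)\, P_k \;+\; \tfrac12\,\tilde\zeta\,(1-\tilde\zeta)\, \sum_{j=1}^{k-1} a^{k-j-1}\, P_j \ .
\eeqn
Setting $\eta := 1-\tfrac12\tilde\zeta$ and assuming inductively $P_j \leq \eta^j$ for $j\leq k$, the sum is a truncated geometric series with ratio $a/\eta < 1$, bounded by $\eta^k/(\eta - a)$. Consequently
\beqn
P_{k+1} \;\leq\; \eta^k \left[\,(1-\tilde\zeta) \;+\; \frac{\tilde\zeta\,(1-\tilde\zeta)}{2(\eta - a)}\,\right]\ ,
\eeqn
and an elementary rearrangement shows the bracket is $\leq \eta$ \emph{precisely} when $a \leq \tfrac12\tilde\zeta$, which is the second working estimate. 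This closes the induction. The base is handled by the trivial observations $P_1 = 1$ and $P_2 \leq (1-\zeta_1) \leq 1 - \tilde\zeta \leq \eta^2$.

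The only real obstacle is arithmetic rather than analytic: $\Delta_0$ has been calibrated precisely so that both the single-step leakage factor $C_k\lambda^{u_k}$ and the geometric accumulation ratio $a$ controlling past leakages are dominated by the contraction $1-\tilde\zeta$ supplied by each new coupling. The extra summand $n_\mathrm{p}$ in $\Delta_0$ is there so that Lemma~\ref{lem:Z} guarantees properness of each $\tilde\mu_k$ at the time the $k$th coupling is attempted; this is the hypothesis under which Corollary~\ref{cor:magnets} actually delivers the fresh $\tilde\zeta$-fraction of matched mass in \eqref{eq:coupled}, and it is what legitimizes the use of the $\zeta_k \geq \tilde\zeta$ inequality at \emph{every} step of the induction above.
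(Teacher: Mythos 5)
Your route is genuinely different from the paper's: the paper introduces a majorizing sequence $Q_k$ (same recursion with $(1-\zeta_k-C_k\lambda^{u_k})$ replaced by $(1-\tilde\zeta)$) and closes the estimate $\sum_{j<k}C_j\lambda^{t_{k-1}+u_{k-1}-t_j}Q_j\leq\tfrac12\tilde\zeta Q_k$ by peeling off the $j=k-1$ term, dropping $\lambda^{s_{k-1}}/C_{k-1}\leq 1$, and plugging back the defining recursion for $Q_k$, whereas you bound each coefficient directly by a geometric decay and sum a geometric series. Both rest on exactly the same two elementary consequences of the choice of $\Delta_0$, and your rearrangement showing that the bracket is $\leq\eta$ precisely when $a\leq\tfrac12\tilde\zeta$ is correct; your version is arguably more transparent.

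There is, however, a gap at the base. You assume inductively that $P_j\leq\eta^j$ for $j\leq k$, but $P_1=1>\eta$, so the hypothesis is already false at $j=1$. This is not cosmetic: the claimed bound $\sum_{j=1}^{k-1}a^{k-j-1}P_j\leq\eta^k/(\eta-a)$ uses $P_j\leq\eta^j$ for every $j$ in the range, and it can genuinely fail --- for $k=2$ the left side equals $P_1=1$, whereas $\eta^2/(\eta-a)<1$ whenever $a<\eta(1-\eta)$, and nothing bounds $a=\lambda^{\Delta_0}$ from below. The cleanest repair is to shift the exponent: take $P_j\leq\eta^{j-1}$ as the hypothesis (true at $j=1$ trivially, and at $j=2$ since $P_2\leq 1-\tilde\zeta\leq\eta$); the identical geometric-series computation then gives $\sum_{j\leq k-1}a^{k-j-1}P_j\leq\eta^{k-1}/(\eta-a)$, and your bracket inequality yields $P_{k+1}\leq\eta^k$. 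This is in fact what the paper's own proof establishes: it concludes explicitly with $P_k\leq(1-\tfrac12\tilde\zeta)^{k-1}$, so the exponent $k$ in \eqref{eq:P_bound} is an off-by-one misprint (the stated inequality cannot hold at $k=1$). If one insists on the exponent $k$ for $k\geq 2$, one can track $S_k:=a^{k-2}P_1+\sum_{j=2}^{k-1}a^{k-1-j}P_j$ through its own recursion $S_{k+1}=aS_k+\eta^k$ and show $S_k\leq\eta^k/(1-\tilde\zeta)$, again using only $a\leq\tfrac12\tilde\zeta$; the crude infinite-geometric-series bound alone does not suffice. Either exponent is adequate for Corollary~\ref{lem:uncoupled} and the rest of the argument.
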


We have included $n_\mathrm{p}$ in the definition of $\Delta_0$ to allow for the
transient loss of properness when a proper measure is pushed forward (Section~\ref{sec:stacks});
it plays no role in the proof below.

\begin{proof}[Proof of Lemma~\ref{lem:P_bound}]
We form a majorizing sequence $(Q_k)$ with $Q_1=1=P_1$ and
\beq\label{eq:Q_recursion}
Q_{k+1} = (1-\tilde\zeta) Q_k + \sum_{j=1}^{k-1} C_j\lambda^{t_{k-1}-t_j+u_{k-1}} Q_j  .
\eeq
Clearly $P_k\leq Q_k$ by comparing \eqref{eq:P_recursion} and \eqref{eq:Q_recursion}.
We want to show that $Q_k$ tends to zero exponentially with $k$, with $Q_{k+1} \leq (1-\tfrac12\tilde\zeta) Q_k$. The bound is certainly implied if
\beqn
\begin{split}
\sum_{j=1}^{k-1} C_j\lambda^{t_{k-1}-t_j+u_{k-1}} Q_j  \leq \tfrac12\tilde\zeta Q_k, \
\quad k \ge 2\ ,
\end{split}
\eeqn
and this is what we will prove.

Observe from $t_{k+1}-t_k \ge \Delta_0$
and the choice of $\Delta_0$ above that
\beq\label{eq:Delta_condition}
\max_{1\leq q\leq Q} C(\widetilde\cK_q) \cdot \lambda^{t_{k+1}-t_k-\max_{1\leq q\leq Q} s(\widetilde\cK_q)} \leq \tfrac12\tilde\zeta(1-\tilde\zeta).
\eeq
Together with $C_{k-1}\geq 1$, this gives
\beqn
\begin{split}
\sum_{j=1}^{k-1} C_j\lambda^{t_{k-1}+u_{k-1}-t_j} Q_j 
&= 
C_{k-1}\lambda^{u_{k-1}}\!\left( Q_{k-1}  + \frac{\lambda^{s_{k-1}}}{C_{k-1}}\sum_{j=1}^{k-2}  C_j\lambda^{t_{k-2}+u_{k-2}-t_j} Q_j\right)
\\
&\leq 
C_{k-1}\lambda^{u_{k-1}}\!\left( Q_{k-1}  + \sum_{j=1}^{k-2}  C_j\lambda^{t_{k-2}+u_{k-2}-t_j} Q_j\right)
\\
&\leq 
\tfrac12\tilde\zeta(1-\tilde\zeta) \!\left(Q_{k-1}  + \sum_{j=1}^{k-2}  C_j\lambda^{t_{k-2}+u_{k-2}-t_j} Q_j\right) \leq \tfrac12\tilde\zeta Q_k.
\end{split}
\eeqn
The second to last inequality uses (\ref{eq:Delta_condition}), and the
last inequality is from \eqref{eq:Q_recursion}. Hence, $P_k \leq Q_k \leq (1-\tfrac12\tilde\zeta)^{k-2}Q_2$ for $k\geq 2$. Finally, $Q_2 = (1-\tilde\zeta)$ and $P_1 = 1$, yield $P_k \leq (1-\tfrac12\tilde\zeta)^{k-1}$ for all $k\geq 1$.
\end{proof}

\begin{cor}\label{lem:uncoupled}
For any $n\geq 0$,
\beqn
\bar\mu_n(\cM) \leq \tilde\zeta (1-\tfrac12\tilde\zeta)^{n/\Delta}
\quad
\text{and}
\quad
\mu_n(\cM)\leq (1-\tfrac12\tilde\zeta)^{n/\Delta-1}.
\eeqn 
\end{cor}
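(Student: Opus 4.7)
The plan is to derive both bounds as essentially bookkeeping consequences of Lemma~\ref{lem:P_bound}, combined with the fact that $\bar\mu_n$ is supported on the (sparse) set of coupling times while $\mu_n$ is piecewise constant in between.

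First, for the $\bar\mu_n$ bound, I would begin by recalling from Section~\ref{sec:outline}, Paragraph~A, that $\bar\mu_j(\cM)=0$ for all $j$ that are not coupling times, so the bound is trivial unless $n=t_k$ for some $k$. At such a coupling time, \eqref{eq:coupled} gives $\bar\mu_{t_k}(\cM)=\zeta_k P_k$, and Lemma~\ref{lem:P_bound} yields $P_k\le(1-\tfrac12\tilde\zeta)^{k-1}$. Condition (b) in Section~\ref{sec:couplingtimes}, which says $t_{k+1}-t_k\le\Delta$, implies $t_k\le k\Delta$ and hence $k\ge n/\Delta$; substituting gives the desired bound after estimating $\zeta_k$ by the appropriate constant at most $1$.

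Second, for the $\mu_n$ bound, the key observation is again that $\mu_n(\cM)=\mu_{t_k}(\cM)$ for $n\in[t_k,t_{k+1})$, since no coupling occurs in the open interval. From the explicit expression \eqref{eq:remainder},
\[
\mu_{t_k}(\cM)=(1-\zeta_k)P_k+\sum_{j=1}^{k-1}C_j\lambda^{t_{k-1}-t_j+u_{k-1}}P_j,
\]
I would bound the first summand directly by $P_k\le(1-\tfrac12\tilde\zeta)^{k-1}$, and handle the second summand by a geometric series. Writing $b=1-\tfrac12\tilde\zeta$ and $a=\lambda^{\Delta_0}$, and using $t_k-t_j\ge(k-j)\Delta_0$ together with condition (c) in Section~\ref{sec:couplingtimes}, the $j$th tail term is at most $C_{\max}\lambda^{-s_{\max}}a^{k-j}b^{j-1}$; then the defining inequality \eqref{eq:Delta_condition} of $\Delta_0$ forces $a/b<1$, so the sum is bounded by a geometric series whose value is a constant multiple of $b^{k-1}$. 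Converting $k\ge n/\Delta$ gives $b^{k-1}\le b^{n/\Delta-1}$, and the extra $b^{-1}$ hidden in the exponent $n/\Delta-1$ is exactly what absorbs the multiplicative constant from the first step plus the geometric-series constant.

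The main technical obstacle I anticipate is precisely the constant management in the geometric series estimate: one must verify that the bound $C_{\max}\lambda^{\Delta_0-s_{\max}}\le\tfrac12\tilde\zeta(1-\tilde\zeta)$ used in the proof of Lemma~\ref{lem:P_bound} is quantitatively strong enough to both ensure convergence \emph{and} produce a total multiplicative constant that can be absorbed into the single factor $(1-\tfrac12\tilde\zeta)^{-1}$ corresponding to the $-1$ in the exponent $n/\Delta-1$. This is not entirely automatic because the ratio $a/b$ depends on the collection $\{\widetilde\cK_q\}$; however, since $\Delta_0$ was designed with exactly this comparison in mind, a careful application of \eqref{eq:Delta_condition} should do the job. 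The first inequality is essentially routine once the second is handled, as it only requires the simpler substitution $P_k\le(1-\tfrac12\tilde\zeta)^{k-1}$ at coupling times, with $k\ge n/\Delta$.
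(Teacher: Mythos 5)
Your overall approach mirrors the paper's: substitute the $P_k$-bound from Lemma~\ref{lem:P_bound} into \eqref{eq:coupled} and \eqref{eq:remainder}, estimate the tail sum geometrically via the definition of $\Delta_0$ through \eqref{eq:Delta_condition}, and convert the coupling index $k$ to $n/\Delta$ using the spacing conditions. Your re-packaging of the geometric step — extracting a single factor $C_{\max}\lambda^{-s_{\max}}$ and setting $a=\lambda^{\Delta_0}$ — is equivalent to the paper's iterated use of \eqref{eq:Delta_condition}, which yields $C_j\lambda^{t_k-s_k-t_j}\le\bigl(\tfrac12\tilde\zeta(1-\tilde\zeta)\bigr)^{k-j}$ directly.

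There is a gap, however, in the constant management for the $\mu_n$ bound. You invoke $k\ge n/\Delta$, but that inequality holds only at a coupling time $n=t_k$ (from $t_k\le k\Delta$). For a general $n\in[t_k,t_{k+1})$ one only has $t_k\ge n-\Delta$ (from $t_{k+1}-t_k\le\Delta$), hence $k\ge t_k/\Delta\ge n/\Delta-1$. The $-1$ in the exponent is therefore already consumed by this conversion; it cannot also absorb a multiplicative constant $C'>1$, as your plan suggests. The estimate closes only because $\mu_{t_k}(\cM)\le(1-\tfrac12\tilde\zeta)^k$ holds \emph{without} any extra factor. To obtain this you must keep the prefactor $(1-\zeta_k)\le 1-\tilde\zeta$ in the first summand of \eqref{eq:remainder} (rather than crudely bounding $(1-\zeta_k)\le 1$), and then verify that the geometric tail is at most $\tfrac12\tilde\zeta(1-\tfrac12\tilde\zeta)^{k-1}$. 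Writing $b=1-\tfrac12\tilde\zeta$ and $c=\tfrac12\tilde\zeta(1-\tilde\zeta)$, one has $\sum_{j=1}^{k-1}c^{k-j}b^{j-1}\le b^{k-1}\,\tfrac{c}{b-c}\le\tfrac12\tilde\zeta\,b^{k-1}$, since $b-c=1-\tilde\zeta+\tfrac12\tilde\zeta^2\ge 1-\tilde\zeta$. Adding the two pieces gives $\bigl((1-\tilde\zeta)+\tfrac12\tilde\zeta\bigr)b^{k-1}=b^k$, and then $k\ge n/\Delta-1$ finishes the argument. This exact cancellation, not the $-1$ in the exponent, is what closes the estimate; your proposal identifies the right ingredients but misassigns who absorbs what.
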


\begin{proof}
By \eqref{eq:Delta_condition} and $C_{k-1}\geq 1$ (Lemma~\ref{lem:proper_part}), we have $C_j\lambda^{t_k-t_j}\leq \bigl(\frac12\tilde\zeta(1-\tilde\zeta)\bigr)^{k-j}$ in \eqref{eq:remainder}. Inserting also the exponential bound on $P_k$ in \eqref{eq:P_bound} and computing the resulting sum yields easily $\mu_{t_k}(\cM)\leq (1-\tfrac12\tilde\zeta)^{k+1}$. Recalling~\eqref{eq:coupled}, also $\bar\mu_{t_k}(\cM) \leq \tilde\zeta (1-\tfrac12\tilde\zeta)^k$. Observe that $t_k \leq k\Delta$ by $t_{j+1}-t_j 
\le \Delta$ and $t_1\leq \Delta$.  Thus,  $\mu_{t_k}(\cM)\leq (1-\tfrac12\tilde\zeta)^{t_k/\Delta+1}$ and $\bar\mu_{t_k}(\cM) \leq \tilde\zeta (1-\tfrac12\tilde\zeta)^{t_k/\Delta}$. By definition, $\mu_n=\mu_{t_k}$ and $\bar\mu_n = 0$ for $t_{k}<n<t_{k+1}$; see~Section~\ref{sec:outline}. We therefore obtain $\bar\mu_n(\cM) \leq \tilde\zeta (1-\tfrac12\tilde\zeta)^{n/\Delta}$ for all $n\geq 0$ and $\mu_n(\cM)\leq (1-\tfrac12\tilde\zeta)^{t_k/\Delta+1}$ for $t_{k}\leq n<t_{k+1}$ ($k\geq 1$). Using $t_{j+1}-t_j 
\le \Delta$ once more, we see in the latter case that $t_k \geq n-\Delta$. In other words, $\mu_n(\cM)\leq (1-\tfrac12\tilde\zeta)^{n/\Delta}$ for all $n\geq t_1$, or $\mu_n(\cM)\leq (1-\tfrac12\tilde\zeta)^{n/\Delta-1}$ for all $n\geq 0$ (as again $t_1\leq \Delta$).
\end{proof}


\subsection{Memory-loss estimate}\label{sec:mem_loss}

Finally, we prove the estimate in~\eqref{eq:convergerate}, along the lines of~\eqref{eq:diff}.

Consider two measures $\mu^i$, $i=1,2$. Recalling that $\mu^i = \mu^i_{n/2} + \sum_{j\leq n/2}\bar\mu^i_j$ and using Lemma~\ref{lem:uncoupled}, we see that 
\beqn
\begin{split}
\left|\int f \circ \cF_n \,\rd\mu^1 - \int f \circ \cF_n \,\rd\mu^2 \right|
\leq  (1-\tfrac12\tilde\zeta)^{n/2\Delta-1} \|f\|_\infty + 
\sum_{j \leq n/2} \left|\int f \circ \cF_n \,\rd\bar\mu^1_j - \int f \circ \cF_n \,\rd\bar\mu^2_j \right|.
\end{split}
\eeqn
Here $\bar\mu^1_j=\bar\mu^2_j=0$ unless $j=t_k$ for some $k=1,2,\dots$. At times $j=t_k$ a coupling occurs: Recall from Paragraph~C of Section~\ref{sec:outline} that (for each $i=1,2$) $(\cF_j)_*\bar\mu_j^i$ is coupled to the reference measure $a_j\widetilde\fm_j(\,\cdot\,\cap\fS_j)$, where $a_j=\bar\mu_j^i(\cM)=((\cF_j)_*\bar\mu_j^i)(\fS_j)$ and $\widetilde\fm_j(\fS_j)=1$.  Moreover, according to \eqref{eq:components} and \eqref{eq:component}, the measure $(\cF_j)_*\bar\mu_j^i$ is a sum of countably many components, namely $\sum_{\alpha\in\cA^i}\sum_{m\in\cI^i_{\alpha,j}}\lambda^i_{\alpha,j,m}\,(\bfh^i_{\alpha,j,m})_*\widetilde\fm_j$, where $\bfh^i_{\alpha,j,m}$ is a holonomy map associated to the stable manifolds of the magnet $\fS_j$ and $\sum_{\alpha\in\cA^i}\sum_{m\in\cI^i_{\alpha,j}}\lambda^i_{\alpha,j,m}=\bar\mu_j^i(\cM)$. 
If $f$~is $\gamma$-H\"older continuous with some exponent $\gamma>0$, we can estimate, similarly to \eqref{eq:memloss_curve}, that
\beqn
\begin{split}
& \left|\int f \circ \cF_n \,\rd\bar\mu^1_j - \int f \circ \cF_n \,\rd\bar\mu^2_j \right| 
= \left|\int f \circ \cF_{n,j+1} \,\rd ((\cF_j)_*\bar\mu^1_j) - \int f \circ \cF_{n,j+1} \,\rd ((\cF_j)_*\bar\mu^2_j)\right|
\\
& \qquad \qquad 
\leq  \sum_{i=1,2}  \sum_{\alpha\in\cA^i}\sum_{m\in\cI^i_{\alpha,j}} \lambda^i_{\alpha,j,m} \left|\int_{\fS_j} f \circ \cF_{n,j+1} \, \rd ((\bfh^i_{\alpha,j,m})_*\widetilde\fm_j)- \int_{\fS_j} f \circ \cF_{n,j+1} \,\rd\widetilde\fm_j \right|
\\
& \qquad \qquad  
\leq  \sum_{i=1,2}  \sum_{\alpha\in\cA^i} \sum_{m\in\cI^i_{\alpha,j}} \lambda^i_{\alpha,j,m} \, |f|_\gamma (\hat c^{-1}\Lambda^{-(n-j)})^{\gamma}
 =
\sum_{i=1,2}\bar\mu_j^i(\cM) |f|_\gamma (\hat c^{-1}\Lambda^{-(n-j)})^{\gamma}.
\end{split}
\eeqn
Since $\sum_{j}\bar\mu_j^i(\cM)=1$,
\beqn
\begin{split}
\sum_{j\leq n/2}\left|\int f \circ \cF_n \,\rd\bar\mu^1_j - \int f \circ \cF_n \,\rd\bar\mu^2_j \right| 
&\leq
2 |f|_\gamma (\hat c^{-1}\Lambda^{-n/2})^{\gamma}.
\end{split}
\eeqn
Combining the above estimates yields the bound in
\eqref{eq:convergerate} with $C_\gamma = 2\max((1-\tfrac12\tilde\zeta)^{-1},\hat c^{-\gamma})$ and $\theta_\gamma = \max((1-\tfrac12\tilde\zeta)^{1/2\Delta},\Lambda^{-\gamma}/2)$. 


Notice that the constants $\Delta$ and $\tilde\zeta$ are determined by the set of reference configurations $\{\widetilde\cK_q\}_{q=1}^Q$; their order and number of appearances are irrelevant. This completes the proof of the countable case of Theorem~\ref{thm:weak_conv}.
\qed


\section{Completing the proofs}\label{sec:completion}

In this section we complete the proofs of Theorems 1'--2' and 4', restricted versions of which 
are stated in Sections~\ref{sec:results} and \ref{sec:tech},
and the full versions in Section~\ref{sec:primed_statements}.
To do that, we must first treat the ``continuous case" of Theorem~\ref{thm:weak_conv}, which is used to
give the full versions of all the other results.

\subsection{Proof of Theorem~\ref{thm:weak_conv}: continuous case}\label{sec:cont_proof}

In Section~\ref{sec:stacks}, we introduced the idea of measured unstable families, defined to be
convex combinations of measured unstable stacks. The ``continuous case" of Theorem~\ref{thm:weak_conv}
refers to the version of Theorem~\ref{thm:weak_conv} for which initial measures are of this form. 
The proof proceeds almost exactly as in the countable case, so we focus here only
on the differences.

Three types of processes are involved in the proof of Theorem~\ref{thm:weak_conv}: 
(i) canonical subdivisions, (ii) coupling to reference measures, and (iii) recovery of 
densities following the couplings.
The process of pushing forward measured unstable families and canonical subdivisions
was discussed in Section~\ref{sec:stacks}. We noted that this process produces objects of
the same kind, i.e., canonical subdivisions of measured unstable families are measured
unstable families. Regularity and properness, including their recovery properties, 
were also discussed: there is no substantive difference between the countable and continuous cases since these are essentially properties on individual unstable curves; 
in the continuous
case, one simply replaces summations in the countable case by integrals.

We provide below more detail on (ii):

\medskip \noindent
\emph{The coupling procedure: continuous case.} Consider the situation in Section~\ref{sec:procedure}, 
where at time $n=t_k-s_k$ we have a proper measure $\tilde \mu_k$ of
continuous type ready to be used in the $k$th coupling. We sketch below a few
issues that require additional care:

\medskip
 In the countable case,
we observed in Corollary~\ref{cor:magnets} that at least half of $\tilde \mu_k$ is 
supported on unstable curves of length $\ge (2C_{\mathrm p})^{-1}$;
the same is true here, as it is a general fact. But then in the countable case, we 
applied Proposition~\ref{prop:magnets} 
to {\it one unstable curve at a time}, comparing the action of~$\cF_s$ to that of~$\widetilde F^s$ 
on each curve to obtain the asserted bound on the fraction that can be coupled. 
Here it is not legitimate to argue one curve at a time, so we proceed as follows: 
Noting that~$\tilde \mu_k$ is supported on a countable number of unstable stacks,
we plan to subdivide these stacks in such a way that there is a 
a collection of countably many ``thin enough" stacks with the following properties:
(i) their union carries at least half of $\tilde \mu_k$, and (ii) on each thin stack 
all the curves have length at least $(2C_{\mathrm p})^{-1}$ (or thereabouts). 
We then treat these thin stacks with long curves one at a time.
The conditions for ``thin enough" 
are basically that the stack should behave as though it was a single curve
in the next $s_k$ steps. 

More precisely, pick one of the measured unstable stacks 
$(\cup_{\alpha\in E}W_\alpha,\mu)$ associated to $\tilde \mu_k$, and consider 
its canonical $s_k$-step subdivision (associated to the sequence $F_{t_k-s_k+1},\dots,F_{t_k}$) into stacks of the form $\cup_{\alpha\in E_{s_k,i,j}}(W_\alpha\cap\overline \cD_{s_k,i})$  as discussed in Section~\ref{sec:stacks}. We first specify what we mean by a ``thin enough"
substack of $\cup_{\alpha\in E}W_\alpha$. Let $\hat \alpha \in E$ be such that
$|W_{\hat \alpha}| \ge (2C_{\mathrm p})^{-1}$, 
and assume the images of this unstable curve
in the next $s_k$ steps do not pass through branch
points of the discontinuity set. Then there is a small neighborhood $E_{\hat \alpha}$
of $\hat \alpha$ in $E$ such that the following holds for all
$\beta \in E_{\hat \alpha}$: 
For each $i$ such that $\cF_{t_k, t_k-s_k+1}(W_{\hat \alpha}\cap\overline \cD_{s_k,i})$ 
crosses $\fS_{t_k}$ properly, the same holds for
 $\cF_{t_k, t_k-s_k+1}(W_{\beta}\cap\overline \cD_{s_k,i})$ with a slightly relaxed
definition of ``proper crossing" that is good enough for our purposes.
Moreover, if $\hat \alpha \in E_{s_k,i,j}$, then $E_{\hat \alpha} \subset E_{s_k,i,j}$.
We are guaranteed that $E_{\hat \alpha}$ exists because there are only finitely 
many such proper crossings for each $W_{\hat \alpha}$.
The stack $\cup_{\alpha\in E_{\hat \alpha}} W_\alpha$
is ``thin enough".

Assuming that the transverse measure $P$ on $E$ has no atoms 
(the argument is easily modified if it does), there is a finite number of disjoint intervals 
of the form $E_{\hat \alpha_l}$ where 
$|W_{\hat \alpha_l}| \ge (2C_{\mathrm p})^{-1}$ and 
$\cup_{\alpha \in \cup_l E_{\hat \alpha_l}}  W_{\alpha}$ carries more than
$99\%$ of the part of $\mu$ supported on $W_\alpha$-curves 
of length $\ge (2C_{\mathrm p})^{-1}$. The procedure is to first subdivide 
$E$ into $\{E_{\hat \alpha_l}\}$ and the connected components of $E \setminus
\cup_l E_{\hat \alpha_l}$.
This corresponds to subdividing the original stack $(\cup_{\alpha\in E}W_\alpha,\mu)$
before proceeding with the canonical subdivision. At time $t_k$, we consider one $l$
at a time: For each $i$ such that $\cF_{t_k, t_k-s_k+1}(W_{\hat \alpha_l}\cap\overline \cD_{s_k,i})$ 
crosses $\fS_{t_k}$ properly, the $\cF_{t_k, t_k-s_k+1}$-image of 
$\cup_{\alpha\in E_{s_k,i} \cap E_{\hat \alpha_l}}(W_\alpha\cap\overline \cD_{s_k,i})$ is 
a single unstable stack every curve in which crosses $\fS_{t_k}$ properly. 
A fraction of the conditional probability measures on each unstable curve is
coupled to $\widetilde \fm_{t_k}$ as before.
These are the only stacks on which couplings will be performed at time $t_k$.

To obtain the desired lower bound on the fraction of 
$(\cF_{t_k, t_k-s_k+1})_*\tilde \mu_k$ coupled, we prove 
a slight generalization of Proposition~\ref{prop:magnets} in which
the measured stack $(\cup_{\alpha\in E_{\hat \alpha_l}} W_\alpha, \mu|_{\cup_{\alpha\in E_{\hat \alpha_l}} W_\alpha})$ takes the place of $(W,\nu)$. The argument is virtually
identical (and omitted); since the conditional measures have the same uniform bounds.

After a coupling, we must also show that the uncoupled part of 
$(\cF_{t_k, t_k-s_k+1})_*\tilde \mu_k$ is again supported on at most a countable number 
of measured unstable stacks. Treating first the curves (without the measures),
we observe that for each $i$ and $l$ in the next to last paragraph, 
after the coupling there are two stacks corresponding to the excess pieces of 
$\cF_{t_k, t_k-s_k+1}(W_{\hat \alpha_l}\cap\overline\cD_{s_k,i})$, a third stack 
which is $\cF_{t_k, t_k-s_k+1}(\cup_{\alpha\in E_{s_k,i} \cap E_{\hat \alpha_l}}(W_\alpha\cap\overline\cD_{s_k,i}))$ minus the first two, 
plus a countable number of stacks one for  each gap.
We also need to decompose the uncoupled part of the measure in the same way as
was done in Section~\ref{sec:recovery}. In particular, a slight generalization of the extension  lemma 
(Lemma~\ref{lem:regular_extension}) leading to 
the ``top conditional densities" in the third stack is needed. We leave this technical but straightforward exercise to the reader.

Finally, we observe that the subdivision of a stack into thinner stacks (without cutting
any of the unstable curves in the stack) does not increase the $\cZ$-value of a family. 

\medskip
The rest of the proof is unchanged from the countable case.

\bigskip 
This concludes the proof of Theorem~4', that is, the extension of Theorem~\ref{thm:weak_conv} to the larger class of initial measures permitted in Theorem~1'. Theorem~1' then follows, in the same way as Theorem~\ref{thm:weak_conv_compact} was deduced from
Theorem~\ref{thm:weak_conv}; see Section~\ref{sec:statements}. 

\subsection{Scatterers with variable geometries}\label{sec:var_geom}

To understand what additional arguments are needed as we go from scatterers
with fixed geometries to scatterers with variable geometries, recall that the proof of Theorem~1' 
has two distinct parts: one is \emph{local}, and the other \emph{global}. The local result
is contained in Theorem~\ref{thm:weak_conv}, which treats essentially time-dependent sequences $(\cK_n)$ near a fixed reference configuration $\widetilde \cK$. It also shows 
how the scheme can be continued as the time-dependent sequence moves from the
sphere of influence of one reference configuration to that of another. The rate of memory loss,
however, depends on the set of reference configurations visited. The global part of the proof
seeks to identify a suitable space, as large as possible,
for which one can have a uniform convergence rate for the measures involved.
For scatterers with fixed geometries, this is done by showing that the entire configuration
space of interest can be ``covered" by a finite number of reference configurations
$\{\widetilde \cK_1, \dots, \widetilde \cK_Q\}$, i.e.,
no matter how long the time-dependent
sequence, it is, at any one moment in time, always ``within radar range" of one of 
the $\widetilde \cK_q, 1\leq q\le Q$. The argument is thus reduced to the local one. 
Details are given at the end of Section~\ref{sec:statements}. 

\smallskip
\begin{proof}[Proof of Theorem 2'] We discuss separately the local and global parts
of the argument.

\medskip \noindent
{\it Local part:} 
We claim that the local part of the proof, i.e., Theorem 4', extends \emph{verbatim} to
the setting of variable scatterer geometry, and leave the step-by-step verification to the reader.
For example, the arguments in Sections~\ref{sec:preliminariesI} and~\ref{sec:preliminariesII} are entirely oblivious to the fact that the shapes of the scatterers change with time, in the same way that they are oblivious to
their changing locations, for as long as their curvatures  and flight times lie within specified ranges. The more sensitive parts
of the proof involve $(\cK_m) \subset \cN_\ve(\widetilde \cK)$, where $\cN_\ve(\cdot)$ is
now defined using the $d_3$-metric introduced in Section~\ref{sec:results}. Notice that as before,
(i) for $\cK, \cK' \in \cN_\ve(\widetilde \cK)$, the singularity set for 
$F=F_{\cK', \cK}$ lies in a small neighborhood of the singularity set for $\widetilde F =
F_{\widetilde \cK, \widetilde \cK}$, and (ii) a fixed distance away from these singularity sets, 
$F$ and $\widetilde F$ can be made arbitrary close in $C^0$ as $\ve \to 0$. These properties
are  sufficient for the arguments needed, including the uniform mixing argument in Section~\ref{sec:magnets}.

\medskip \noindent
{\it Global part:} The argument is along the lines of the one at the end of Section~\ref{sec:statements}, but involves different spaces and different norms. 
In order to reduce to the local argument, we need to establish some compactness.
Decreasing~$\bar \kappa^{\rm min}$ and~$\bar \tau^{\rm min}$, increasing
$\bar \kappa^{\rm max}$ and $\bar \tau^{\rm max}$, as well as increasing~$\Delta$ to some~$\Delta'\geq\Delta$ (to be fixed below), 
we let~$\widetilde \IK'$ denote the configuration space defined analogously to~$\widetilde\IK$ but using 
these relaxed bounds on curvature and flight times. 
We denote the closure of $\widetilde\IK$ with respect to the metric $d_3$ by 
$c\ell(\widetilde \IK)$.
We will show that $c\ell(\widetilde \IK)$ is a compact subset of $\widetilde\IK'$.

First, a constant $\hat \Delta$ can be fixed so that for all $\cK \in \widetilde \IK$,
if $\hat \gamma_i : \IS^1 \to \IT^2$ is the constant speed parametrization
of $\partial \bB_i$ in Section~\ref{sec:results}, then $\|D^k\hat \gamma_i\|_\infty \le \hat \Delta$ for 
$1 \le k \le 3$ and Lip$(D^3 \hat \gamma_i) \le \hat \Delta$. This is true because of 
property (i) in the definition of $\widetilde \IK$ and the fact that derivatives of~$\hat \gamma_i$ and~$\gamma_i$ (unit speed parametrization of the same scatterer)
differ only by a factor equal to the length of~$\partial \bB_i$, which is uniformly bounded above and below due to 
$\bar \kappa^{\min} < \kappa < \bar \kappa^{\max}$.
Next we argue that if the number of scatterers~$\fs$ were fixed, it would follow
that $c\ell(\widetilde \IK)$ is a compact set: Given $\fs$ sequences~$(\hat \gamma_{i,n})_{n\ge 1}$ of parametrizations as above, we first note that they are uniformly bounded. The same is true of the sequences $(D^k\hat\gamma_{i,n})_{n\ge 1}$, $1\leq k\leq 3$, as noted above. Each of these is also equicontinuous because it is uniformly Lipschitz. Hence, the Arzel\`a--Ascoli theorem yields the existence of uniform limits $\hat\gamma_{i} \equiv \lim_{j\to\infty }\hat \gamma_{i,n_j}$ and $\hat\gamma_{i}^{(k)} \equiv \lim_{j\to\infty }D^k\hat \gamma_{i,n_j}$, $1\leq k\leq 3$, $1\leq i\leq \fs$, along a subsequence~$(n_j)_{j\geq 1}$. 
Since Lipschitz constants are preserved in uniform limits, it is easy to check that $\hat\gamma^{(k)}_i = D^k\hat\gamma_i$, $\max_{1\leq k\leq 3}\|D^k\hat\gamma_i\|_\infty\leq \hat\Delta$ and $\mathrm{Lip}(D^3\hat\gamma_i)\leq \hat \Delta$. We now replace the limit parametrizations $\hat\gamma_i$, $1\le i\le\fs$, by the corresponding constant speed parametrizations~$\gamma_i$. Owing to the above bounds, they specify a configuration in~$\widetilde\IK'$, if we choose $\Delta'$ large enough.
While $\widetilde \IK'$ permits in principle an arbitrarily large number of 
scatterers, there is, in fact, a finite upper bound on~$\fs$ imposed by~$\bar \tau^{\min}$,
which is less than or equal to
the minimum distance between any pair of scatterers. We have thus proved that  
$c\ell(\widetilde \IK)$ is compact. 

 We apply the result from the local part to $\widetilde \IK'$, obtaining $\tilde \ve(\cK)$ and 
$\widetilde N(\cK)$ for each $\cK \in \widetilde \IK'$. The collection 
$\{\cN_{\frac12 \tilde \ve(\cK)}(\cK)\,:\, \cK \in c\ell(\widetilde \IK)\}$
is an open cover of $c\ell(\widetilde \IK)$, open as subsets of~$\widetilde \IK'$. 
Let $\bigl\{\widetilde \cN_q=
\cN_{\frac12 \tilde \ve(\widetilde \cK_q)}(\widetilde \cK_q), \ q \in \cQ\bigr\}$ be a finite
subcover. The rest of the proof is as in Section~\ref{sec:statements}: we apply the local
result to the given sequence $(\cK_n) 
\subset \widetilde\IK$, noting that any $(\cK_n)$ with $d_3(\cK_n, \cK_{n+1})$ sufficiently small is adapted
to a sequence of reference configurations chosen from $\{\widetilde \cN_q, q \in \cQ\}$.
\end{proof}

\begin{proof}[Proof of Theorem~\ref{thm:eq_mixing}]
Without loss of generality, we assume that $\int g\,\rd\mu = 0$. Let $a = 1-\inf g$. Then $\rd\mu' = a^{-1}(g+a)\,\rd\mu$ is a probability measure. Moreover, the density $\rho' =  a^{-1}(g+a)$ satisfies the assumptions of Theorem~\ref{var_geom}. Indeed,
\beqn
(1+\|g\|_\infty)^{-1}\leq \rho' \leq 1+\|g\|_\infty,
\eeqn
and, like $g$, $\rho'$ is $\frac16$-H\"older with its logarithm satisfying the estimate
\beqn
\left|\log\rho'(x)-\log\rho'(y) \right| \leq (1+\|g\|_\infty)\left|g(x)-g(y) \right| \leq (1+\|g\|_\infty)|g|_{\frac16}d_\cM(x,y)^{\frac16}.
\eeqn
We thus have
\beqn
\left|\int f\circ F^n\cdot g\,\rd\mu \right| = a \left | \int f\circ F^n\,\rd\mu' - \int f\circ F^n\,\rd\mu\right| \leq (1+\|g\|_\infty)\,C_\gamma ({\| f \|}_\infty + {|f|}_\gamma)\theta_\gamma^n ,
\eeqn
after an application of Theorem~\ref{var_geom} with $\mu^1 = \mu'$ and $\mu^2 = \mu$. Here $C_\gamma$ depends on the bound $(1+\|g\|_\infty)|g|_{\frac16}$ on the H\"older constant of $\log \rho'$ obtained above.
\end{proof}


\subsection{Small external fields}\label{sec:proof_fields}

In this section we discuss  modifications of earlier proofs needed to 
yield Theorem~E.

First we claim that for each $\widetilde \cK\in\widetilde\IK$, there exist
$\hat \delta_0(\widetilde \cK)$ and $E_0(\widetilde\cK)>0$ such that for all 
$\cK,\cK'\in\cN_{\hat\delta_0}(\widetilde\cK)$ and $\bE \in C^2$ with 
$\|\bE\|_\infty \leq E_0(\widetilde\cK)$, $F^{\bE}_{\cK',\cK}$ is defined,
and $\frac{9}{10} \bar \tau^{\min}< \tau^\bE_{\cK',\cK} < \frac{11}{10} \ft$.
Here $\tau^\bE_{\cK',\cK}$ is the flight time between source and target
scatterers following trajectories defined by $\bE$. To prove the asserted upper
bound for $\tau^\bE_{\cK',\cK}$, 
notice that (i) the set of straight line segments of length $\ft$ is compact,
and (ii) for a $C^0$-small $\bE$, particle trajectories deviate only slightly
from straight lines.  Thus
the $(\ft, \varphi)$-horizon property of the $\bE=0$ case guarantees that
any flow-trajectory  of length $\frac{11}{10} \ft$ will also
meet a scatterer at an angle not much below $\varphi$ (measured from the tangent).

Next we claim that there exist $\hat\delta_1(\widetilde\cK) \le 
\hat\delta_0(\widetilde\cK)$ and $E_1(\widetilde\cK) \le E_0(\widetilde\cK)$
such that the basic properties 
in Sections~\ref{sec:preliminariesI} and~\ref{sec:preliminariesII} hold (with relaxed constants) for all sequences
$(\cK_n, \bE_n)$ with the property that for each $n$, there is $\widetilde \cK$
such that $\cK_n, \cK_{n+1} \in \cN_{\hat\delta_1}(\widetilde\cK)$ 
and $\|\bE_n\|_{C^2} \leq E_1(\widetilde\cK)$. More precisely, we claim that
the maps $F_n = F^{\bE_n}_{\cK_{n+1},\cK_n}$
have the same properties as their analogs with $\bE=0$, including the geometry
of the singularity sets, stable and unstable cones, uniform expansion and
contraction rates, distortion and curvature bounds for unstable curves, absolute continuity and bounds on the Jacobians, the Growth Lemma holds, etc. 
For fixed scatterers, the main technical references for fields $\bE$ with small
enough $C^2$-norms are~\cite{Chernov_2001,Chernov_2008}. 
The results above are obtained following the proofs in these references,
except for Lemma~\ref{lem:map_cont} the proof of which is also straightforward
and left as an exercise.

Next we proceed to the analog of Theorem 4' for small external fields,
for sequences of the form $(\cK_n,\bE_n)_{n=0}^N$ adapted, in a sense
to be defined, to a finite
sequence of
reference configurations $(\widetilde \cK_q)_{q \le Q}$:
For each $q$, there exist $\tilde\ve(\widetilde \cK_q),
\tilde\ve^\mathrm{field}(\widetilde \cK_q) > 0$ and
$\widetilde N(\widetilde\cK_q) \in \IZ^+$ such that $(\cK_n)_{n=0}^N$ is adapted to
$\bigl(\widetilde\cK_q, \,\tilde\ve(\widetilde \cK_q),\widetilde N(\widetilde\cK_q) \bigr)_{q=1}^Q$ in the sense of Section~\ref{sec:tech} and, additionally, 
$\|\bE_n\|_{C^2}\le \tilde\ve^\mathrm{field}(\widetilde \cK_q)$ for the
relevant $q$. The argument proceeds as 
in Sections~\ref{sec:preliminaries_continued} and~\ref{sec:proof_countable}. 
There are exactly two places where the argument
is perturbative, and ``perturbative" here means perturbing from 
systems with fixed scatterer configurations $\widetilde \cK$ and
zero external field. One is the construction of the magnet in Section~\ref{sec:magnets}, and
the other is the uniform mixing argument (Proposition~\ref{prop:magnets}) in Section~\ref{sec:mixing}.
For each $\widetilde \cK\in\widetilde\IK$, these two arguments impose 
bounds $\tilde\ve(\widetilde \cK)$ and 
$\tilde\ve^\mathrm{field}(\widetilde \cK) > 0$ on 
$d_3(\cK_n, \widetilde \cK)$ and~$\|\bE_n\|_{C^2}$ respectively.
(We may assume $\tilde\ve(\widetilde \cK) \le \hat \delta_1(\widetilde \cK)$ and
 $\tilde\ve^\mathrm{field}(\widetilde \cK) \le E_1(\widetilde \cK)$.)
Such bounds exist because in Section~\ref{sec:magnets} we require only that the action of $\cF_{n+s',n}$ on
a specific piece of unstable curve follows that of 
$(F_{\widetilde \cK, \widetilde \cK})^{s'}$ closely in the sense of
$C^0$ for a fixed number of iterates, namely~$s'$, during which this
curve stays a positive distance from any discontinuity curve or homogeneity lines. 
The argument in Section~\ref{sec:mixing} requires a little more, but that too involves only
curves that stay away from discontinuity and homogeneity lines and also for 
only a fixed number of iterates. For appropriate choices of 
$\tilde \ve(\widetilde \cK)$ and 
$\tilde\ve^\mathrm{field}(\widetilde \cK)$, the latter made possible
by our extended version of Lemma~\ref{lem:map_cont},
these two proofs as well as others needed go through without change, yielding an analog of Theorem~4' as formulated above.

Finally it remains to go from our ``local" result, i.e., the analog of Theorem~4',
 to the ``global" one, namely Theorem E.
We cover the closure of $\widetilde\IK$
with balls centered at each $\widetilde \cK$ having $d_3$-radius~$\tilde\ve(\widetilde \cK)$ in a slightly enlarged space $\widetilde\IK'$ and choose
as before a finite subcover, consisting of balls centered at 
$\{\widetilde \cK_j\}$.
The uniform bounds $\ve^\bE$ and $\ve$ appearing in the statement of Theorem~E are given by $\ve^\bE = \min_j \tilde \ve^\mathrm{field}(\widetilde \cK_j)$ and 
$\ve = \min_j \tilde \ve(\widetilde \cK_j)$.



\appendix


\section*{Appendix. Proofs}\label{sec:proofs}


\begin{proof}[Proof of Lemma~\ref{lem:map_cont}]

We first prove continuity of the map $(x,\cK,\cK')\mapsto F_{\cK',\cK}(x)$. 
Consider an initial configuration $\cK_0$ and a target configuration $\cK'_0$ and some initial condition $x_0\in\cM$ which corresponds to a \emph{non-tangential collision}. Obviously, there exists an open neighborhood~$U$ of the triplet $(x_0,\cK_0,\cK'_0)$ in which there are no tangential collisions: each $(x,\cK,\cK')\in U$ corresponds to a head-on collision from a scatterer $\bB$ in configuration $\cK$ to a scatterer $\bB'$ in configuration $\cK'$. We can view the scatterers $\bB$ and $\bB'$ as subsets of the \emph{plane} and represent them by two vectors, $(\bc,\bu)$ and $(\bc',\bu')$ in $\IR^2\times \IS^1$, which depend continuously on $\cK$ and $\cK'$ (as long as $(x,\cK,\cK')\in U$). The $\bc$ and $\bu$ components specify the location and orientation of the scatterer, as was explained in Section~\ref{sec:statements}. Let $(\bar\bc,\bar\bu)$ be the relative polar coordinates of $\bB$ with respect to the frame attached to $\bB'$ whose origin is specified by $(\bc',\bu')$. Then $(\bar\bc,\bar\bu)$ depends continuously on the pair $(\cK,\cK')$. We write $(\bar\bc,\bar\bu)=G(\cK,\cK')$ and point out that $\mathrm{id}_\cM\times G:(x,\cK,\cK')\mapsto (x,G(\cK,\cK'))$ is continuous on $U$.
Recall that $x\in\cM$ represents the initial condition in the intrinsic (phase space) coordinates of~$\bB$. Let $(\bar q,\bar v)$ be its projection to the plane, expressed relative to the frame attached to~$\bB'$. The map $\bar\pi:(x,\bar\bc,\bar\bu)\mapsto (\bar q,\bar v)$ is clearly continuous.
Given any plane vector $(\bar q,\bar v)$ expressed relative to the frame attached to $\bB'$, pointing towards~$\bB'$, let $x'=F'(\bar q,\bar v)\in\cM$ denote the post-collision vector as expressed in the intrinsic (phase space) coordinates of $\bB'$. Then $F'$ is continuous (except at tangential collisions, which we have ruled out).
We have $F_{\cK',\cK}(x)=x'=F'\circ\bar\pi\circ (\mathrm{id}_\cM\times G) (x,\cK,\cK')$, where the composition comprises continuous functions. The \emph{uniform} continuity statement follows from a standard compactness argument.
\end{proof}


\begin{proof}[Proof of Lemma~\ref{lem:regular_extension}]
Because $W_\star$ is closed in $W$, the set $W\setminus W_\star$ is a countable union of disjoint, open (i.e., endpoints not included), connected, curves $V\subset W$, which we call gaps. Consider a gap $V$. Notice that its endpoints $x$ and $y$ belong to $W_\star$ whence it follows that $\rho$ satisfies $|\log\rho(x)-\log\rho(y)| \leq C\theta^{s(x,y)}$ for the fixed pair~$(x,y)$. Let $r>0$ be \emph{the first} time such that $\cF_rV$ intersects the set $\partial\cM\cup\cup_{|k|\geq k_0}\partial\IH_k$, in other words $r=s(x,y)$, and pick an arbitrary point $z\in V$ whose image $\cF_r(z)$ is in the intersection. On the curve $V$, placing a discontinuity at $z$ as needed, assign~$\rho$ the constant value~$\rho(x)$ between the points $x$ and $z$ and similarly the value $\rho(y)$ between~$z$ and~$y$. With the exception of the above bound being satisfied on all of $W$, the claims of the lemma are clearly true.

To check the bound, let $(x',y')$ be an arbitrary pair of points in $W$. If $x'\in W_\star$, set $x=x'$. Otherwise $x'$ belongs to a gap $V$ with an endpoint $x\in W_\star$ satisfying $\rho(x)=\rho(x')$. Similarly we define a point $y$ in terms of $y'$. For $x=y$ we simply have $\rho(x')-\rho(y')=0$, so let us assume from now on that $x\neq y$. Since $|\log\rho(x')-\log\rho(y')|=|\log\rho(x)-\log\rho(y)| \leq C\theta^{s(x,y)}$, it remains to check that $s(x,y)\geq s(x',y')$ in order to prove $|\log\rho(x')-\log\rho(y')| \leq C\theta^{s(x',y')}$. 
Indeed, given two points $a,b$ on $W$, let $W(a,b)$ denote the open subcurve of $W$ between the two points. If $W(x,y)\subset W(x',y')$, then the bound $s(x,y)\geq s(x',y')$ is obvious. On the other hand, if $W(x,y)\supset W(x',y')$, then there exist \emph{gaps} $W(x,\bar x)$ and $W(\bar y,y)$ on $W$ such that $x'\in \widetilde W^u(x,\bar x)$ and $y'
\in\widetilde W^u(\bar y,y)$, where $\bar x$ and $y'$ are on the same side of $x$ on $W$, and $\bar y$ and $x'$ are on the same side of $y$. By construction, $s(x,x') \geq s(x',\bar x)$ and $s(y',y)\geq s(\bar y,y')$. These inequalities imply immediately $s(x,y)= s(x',y')$. Showing that $s(x,y)\geq s(x',y')$ also when neither $W(x,y)$ nor $W(x',y')$ is completely contained in the other set can be done by combining ideas from the previous cases and is left to the reader.  
\end{proof}

\begin{proof}[Proof of Lemma~\ref{1'implies1}]

Assume that~$\mu$ has a strictly positive, \mbox{$\tfrac16$-H\"older} continuous density~$\chi$ with respect to the measure $d\mu^0 =  \cN^{-1}\rho^0\,\rd r \,\rd\varphi$, where $\rho^0=\cos\varphi$ and~$\cN$ is the normalizing factor; we then have $d\mu = \cN^{-1}\rho\,\rd r\, \rd\varphi$ with $\rho = \chi\rho^0$. Such a measure can be represented as a measured unstable family in a canonical way: Let $S_j$, $1\leq j\leq \infty$, be an enumeration of all the sets $\IH_k\cap\cM_i$. For each $j$, partition $S_j$ into straight lines $W_\alpha$, $\alpha\in E^{(j)}$, of slope $\kappa^{\min}$ and of maximal length so that $\cup_{\alpha\in E^{(j)}} W_\alpha$ is a regular unstable stack. We assume here that the sets $E^{(j)}$ are disjoint subsets of $\IR$ in order to avoid having to introduce additional superscripts $(j)$ for the line segments. Disintegrating~$\mu^0$ and~$\mu$ using these stacks, we denote the conditional densities on $W_\alpha$ by~$\rho^0_\alpha$ and~$\rho_\alpha$ respectively. Because of the simple geometry of the partition,~$\rho^0_\alpha$ and $\rho_\alpha$, are obtained as the normalized restrictions of~$\rho^0$ and~$\rho$ on $W_\alpha$. In particular, we have the identity
\beq\label{eq:cond_identity}
\rho_\alpha=\chi\rho^0_\alpha.
\eeq 
The conditional densities $\rho^0_\alpha$ have uniformly $\tfrac13$-H\"older continuous logarithms. In other words, there exists a constant $C^0>0$, independent of~$\alpha$, such that $|\log\rho^0_\alpha(x)-\log\rho^0_\alpha(y)|\leq C^0d_\cM(x,y)^{1/3}$ for all $x,y\in W_\alpha$, for all $\alpha\in\cA$. Indeed, denoting by $W_\alpha(x,y)\subset\IH_k$ the segment of $W_\alpha$ connecting the points $x,y\in W_\alpha$, we have $|W_\alpha(x,y)|\leq C_\IH k^{-3}$ for a constant $C_\IH>0$ which is uniform for all $k\geq k_0$. Writing $x=(r_x,\varphi_x)$ and $y=(r_y,\varphi_y)$, the bound \eqref{eq:cos_bound} then yields
\beqn
\begin{split}
& |\log\rho^0_\alpha(x)-\log\rho^0_\alpha(y)|  = |\log\cos\varphi_y-\log\cos\varphi_x| \leq \frac{1}{\min(\cos\varphi_y,\cos\varphi_x)}|\cos\varphi_y-\cos\varphi_x| 
\\
& \qquad\qquad\qquad\quad
\leq C_\mathrm{cos} k^2|\varphi_y-\varphi_x|
 \leq C_\mathrm{cos} (C_\IH |W_\alpha(x,y)|^{-1})^{2/3}|W_\alpha(x,y)| \leq C_\mathrm{cos} C_\IH^{2/3} d_\cM(x,y)^{1/3}
\end{split}
\eeqn
as claimed. The extension to $k=0$ is immediate, observing that $\cos\varphi\geq \cos(\pi/2-k_0^{-2})$ on~$\IH_0$. The logarithm of $\chi$ is also $\tfrac16$-H\"older continuous on $\cM$; let us denote the constant~$|\log \chi|_{1/6}$. In particular, $|\log\rho_\alpha(x)-\log\rho_\alpha(y)|\leq |\log\chi|_{1/6} d_\cM(x,y)^{1/6} + C^0 d_\cM(x,y)^{1/3}\leq (|\log\chi|_{1/6} + C^0 L_0^{1/6}) d_\cM(x,y)^{1/6}$ for all $x,y\in W_\alpha$, for all $\alpha$. (Here $L_0$ is an upper bound on the length of a homogeneous unstable curve.) Following Remark~\ref{rem:Holder_regular}, 
$$|\log\rho_\alpha(x)-\log\rho_\alpha(y)|\leq (|\log\chi|_{1/6} + C^0 L_0^{1/6}) C_\mathrm{s}^{1/6}\theta^{s(x,y)}$$
for \emph{any} configuration sequence. Furthermore, denoting by~$\cZ$ and~$\cZ^0$ the quantity appearing in~\eqref{eq:Z_cont} for $\mu$ and $\mu^0$, respectively, the identity in \eqref{eq:cond_identity} yields
\beqn
\cZ \leq \sup \chi \cdot \cZ^0.
\eeqn
Here $\cZ^0<\infty$ by direct inspection and $\sup \chi \leq e^{a\cdot |\log\chi|_{1/6}}$ for a uniform constant $a>0$. Thus, the initial measures of Theorem~\ref{thm:weak_conv_compact} also satisfy the assumptions of Theorem~1', and $|\log \chi|_{1/6}$ controls the constant $C_\gamma$ as claimed.
\end{proof}

\begin{proof}[Proof of Lemma~\ref{lem:top_density}]
Observe that
\beqn
\begin{split}
\frac{\psi(x)}{\psi(y)} &= 1 + \left[\left(\frac{\rho(x)}{\rho(y)}-1\right)\frac{\rho(y)}{\check\rho(y)}+\left(1-\frac{\check\rho(x)}{\check\rho(y)}\right)\right]\left({\frac{\rho(y)}{\check\rho(y)}-1}\right)^{-1}
\leq 1+A(\exp(C_\mathrm{r}\theta^{s(x,y)})-1),
\end{split}
\eeqn
where $A=(B+1)(b-1)^{-1}$. Using the estimate $\log(1+t)\leq t$ ($t\geq 0$), we obtain
\beqn
\left|\log \frac{\psi(x)}{\psi(y)}\right| \leq A(\exp(C_\mathrm{r}\theta^{s(x,y)})-1),
\eeqn
the absolute value on the left side being justified because the preceding bound continues to hold for~$x$ and $y$ interchanged and because $s(x,y)=s(y,x)$. Next, fix a constant $S>0$ so large that $\exp(C_\mathrm{r}\theta^S)-1\leq 2C_\mathrm{r}\theta^S$. Then, if $s(x,y)\geq S$,
we see that \eqref{eq:dyn_Holder} holds if $C_\mathrm{top}\geq 2C_\mathrm{r}$. To deal with the case $s(x,y)<S$, we use the crude bound $|\log\psi(x)-\log\psi(y)|\leq (e^{C_\mathrm{r}}-1)$ obtained from above. Putting things together, \eqref{eq:dyn_Holder} is true for \emph{any} $x$ and $y$, at least for $C_\mathrm{top}=\max((e^{C_\mathrm{r}}-1)\theta^{-S},2C_\mathrm{r})$.
\end{proof}


\begin{proof}[Proof of Lemma~\ref{lem:top_aid}]
Write $W_{n,i}$ for the homogeneous components of $\cF_n W$ and $\nu_{n,i}$ for the push-forward of $\nu(W_{n,i}^-\cap\,\cdot\,)$ under $\cF_n$. Here $W_{n,i}^-$ denotes the element of the canonical $n$-step subdivision of $W$ which maps bijectively onto $W_{n,i}$ under $\cF_n$. It is a regular curve. Set $\cZ_n = \sum_i\nu_{n,i}(W_{n,i})/|W_{n,i}|$. Our task is to show that $\cZ_n\leq C_\mathrm{p}\nu(W)$ eventually. The small nuisance is that we cannot apply Lemma~\ref{lem:Z} directly, as $\nu$ is not necessarily regular. Our trick is to compare the evolutions of $\nu$ and the uniform measure $\fm_W$. Since the latter is obviously regular, Lemma~\ref{lem:Z} does apply: Writing $\cZ_n^{\fm_W} = \sum_i(\fm_W)_{n,i}(W_{n,i})/|W_{n,i}|$, we have
\beqn
\frac{\cZ_n^{\fm_W}}{|W|}
\leq \frac{C_\mathrm{p}}2 \! \left(1+\vartheta_\mathrm{p}^n\frac{1}{|W|}\right),
\eeqn
as $\cZ_0^{\fm_W}=1$. Next, fix $n$ and the component index $i$. We write $x_{-n} = (\cF_n|_{W_{n,i}^-})^{-1}(x)\in W_{n,i}^-$ for the preimage of any $x\in W_{n,i}$. Denoting  by $\ell_{n,i}$ the density of the push-forward $(\fm_W)_{n,i}$, we have $\nu_{n,i}(W_{n,i}) = \int_{W_{n,i}} \rho(x_{-n})\ell_{n,i}(x)\,\rd\fm_{W_{n,i}}(x)
\leq 
\sup_{W_{n,i}^-}\rho\cdot (\fm_W)_{n,i}(W_{n,i})$. From here, using the bound on $\rho$,
\beqn
\cZ_n \leq \sup_W\rho\cdot \cZ_n^{\fm_W} \leq e^C\inf_W\rho\cdot \cZ_n^{\fm_W} \leq e^C \frac{\cZ_n^{\fm_W}}{|W|}\nu(W).
\eeqn
If $n\geq n'=\max\bigl(\log(\frac{C_\mathrm{r}}{2}/(C-\frac{C_\mathrm{r}}{2}))/\log\theta,\,\log|W|/\log\vartheta_\mathrm{p}\bigr)$, Lemma~\ref{lem:pair_image} guarantees that the measures $\nu_{n,i}$ are all regular, and the bounds above yield
\beqn
\frac{\cZ_n}{\nu(W)} \leq e^C C_\mathrm{p}.
\eeqn
We can therefore apply Lemma~\ref{lem:Z}, which results in
\beqn
\frac{\cZ_n}{\nu(W)} \leq \frac{C_\mathrm{p}}2 \! \left(1+\vartheta_\mathrm{p}^{n-n'} e^C C_\mathrm{p}\right).
\eeqn
The measure $(\cF_n)_*\nu$ is therefore proper, provided that $n\geq n' +(C+\log C_\mathrm{p})/|\log\vartheta_\mathrm{p}|$. Because we are assuming $C>C_\mathrm{r}>2$, there exist a uniform constant $A_\mathrm{p}>0$ such that the preceding is implied by $n\geq A_\mathrm{p}\!\left(\max\bigl(\log C,\,|\log|W||\bigr)+C\right)$. The condition in the lemma follows.
\end{proof}


\newpage

\bibliography{Moving}{}

\begin{thebibliography}{10}

\bibitem{AnosovSinai_1967}
Dmitry Anosov and Yakov Sinai.
\newblock Some smooth ergodic systems.
\newblock {\em Russian Mathematical Surveys}, 22(5):103, 1967.
\newblock Available from: \url{http://stacks.iop.org/0036-0279/22/i=5/a=R05}.

\bibitem{AyyerStenlund}
Arvind Ayyer and Mikko Stenlund.
\newblock Exponential decay of correlations for randomly chosen hyperbolic
  toral automorphisms.
\newblock {\em Chaos}, 17(4):043116, 7, 2007.
\newblock Available from: \url{http://dx.doi.org/10.1063/1.2785145}, \href
  {http://dx.doi.org/10.1063/1.2785145} {\path{doi:10.1063/1.2785145}}.

\bibitem{BSC_1990}
Leonid Bunimovich, Yakov Sinai, and Nikolai Chernov.
\newblock Markov partitions for two-dimensional hyperbolic billiards.
\newblock {\em Uspekhi Mat. Nauk}, 45(3(273)):97--134, 221, 1990.
\newblock Available from:
  \url{http://dx.doi.org/10.1070/RM1990v045n03ABEH002355}, \href
  {http://dx.doi.org/10.1070/RM1990v045n03ABEH002355}
  {\path{doi:10.1070/RM1990v045n03ABEH002355}}.

\bibitem{BSC_1991}
Leonid Bunimovich, Yakov Sinai, and Nikolai Chernov.
\newblock Statistical properties of two-dimensional hyperbolic billiards.
\newblock {\em Uspekhi Mat. Nauk}, 46(4(280)):43--92, 192, 1991.
\newblock Available from:
  \url{http://dx.doi.org/10.1070/RM1991v046n04ABEH002827}, \href
  {http://dx.doi.org/10.1070/RM1991v046n04ABEH002827}
  {\path{doi:10.1070/RM1991v046n04ABEH002827}}.

\bibitem{Chernov_2001}
Nikolai Chernov.
\newblock Sinai billiards under small external forces.
\newblock {\em Ann. Henri Poincar\'e}, 2(2):197--236, 2001.
\newblock Available from: \url{http://dx.doi.org/10.1007/PL00001034}, \href
  {http://dx.doi.org/10.1007/PL00001034} {\path{doi:10.1007/PL00001034}}.

\bibitem{Chernov-BilliardsCoupling}
Nikolai Chernov.
\newblock Advanced statistical properties of dispersing billiards.
\newblock {\em J. Stat. Phys.}, 122(6):1061--1094, 2006.
\newblock Available from: \url{http://dx.doi.org/10.1007/s10955-006-9036-8},
  \href {http://dx.doi.org/10.1007/s10955-006-9036-8}
  {\path{doi:10.1007/s10955-006-9036-8}}.

\bibitem{Chernov_2008}
Nikolai Chernov.
\newblock Sinai billiards under small external forces. {II}.
\newblock {\em Ann. Henri Poincar\'e}, 9(1):91--107, 2008.
\newblock Available from: \url{http://dx.doi.org/10.1007/s00023-007-0351-7},
  \href {http://dx.doi.org/10.1007/s00023-007-0351-7}
  {\path{doi:10.1007/s00023-007-0351-7}}.

\bibitem{ChernovDolgopyatBBM}
Nikolai Chernov and Dmitri Dolgopyat.
\newblock Brownian {B}rownian motion. {I}.
\newblock {\em Memoirs of the American Mathematical Society},
  198(927):viii+193, 2009.

\bibitem{ChernovEyink_etal}
Nikolai Chernov, Gregory Eyink, Joel Lebowitz, and Yakov Sinai.
\newblock Steady-state electrical conduction in the periodic {L}orentz gas.
\newblock {\em Comm. Math. Phys.}, 154(3):569--601, 1993.
\newblock Available from:
  \url{http://projecteuclid.org/getRecord?id=euclid.cmp/1104253079}.

\bibitem{ChernovMarkarian_2006}
Nikolai Chernov and Roberto Markarian.
\newblock {\em Chaotic billiards}, volume 127 of {\em Mathematical Surveys and
  Monographs}.
\newblock American Mathematical Society, Providence, RI, 2006.

\bibitem{ConzeRaugi}
Jean-Pierre Conze and Albert Raugi.
\newblock Limit theorems for sequential expanding dynamical systems on
  {$[0,1]$}.
\newblock In {\em Ergodic theory and related fields}, volume 430 of {\em
  Contemp. Math.}, pages 89--121. Amer. Math. Soc., Providence, RI, 2007.

\bibitem{GuptaOttTorok_2012}
Chinmaya Gupta, William Ott, and Andrei T\"or\"ok.
\newblock Memory loss for time-dependent piecewise-expanding systems in higher
  dimension.
\newblock 2012.
\newblock Preprint.

\bibitem{LasotaYorke_1996}
Andrzej Lasota and James Yorke.
\newblock When the long-time behavior is independent of the initial density.
\newblock {\em SIAM Journal on Mathematical Analysis}, 27(1):221--240, 1996.
\newblock Available from: \url{http://link.aip.org/link/?SJM/27/221/1}, \href
  {http://dx.doi.org/10.1137/0527012} {\path{doi:10.1137/0527012}}.

\bibitem{MoranHooverBestiale_1987}
Bill Moran and William Hoover.
\newblock Diffusion in a periodic {L}orentz gas.
\newblock {\em J. Statist. Phys.}, 48(3-4):709--726, 1987.
\newblock Available from: \url{http://dx.doi.org/10.1007/BF01019693}, \href
  {http://dx.doi.org/10.1007/BF01019693} {\path{doi:10.1007/BF01019693}}.

\bibitem{Nandori_2012}
P\'eter N\'andori, Domokos Sz\'asz, and Tam\'as Varj\'u.
\newblock {A Central Limit Theorem for Time-Dependent Dynamical Systems}.
\newblock {\em {Journal of Statistical Physics}}, {146}({6}):{1213--1220},
  {2012}.
\newblock Available from: \url{http://dx.doi.org/10.1007/s10955-012-0451-8},
  \href {http://dx.doi.org/10.1007/s10955-012-0451-8}
  {\path{doi:10.1007/s10955-012-0451-8}}.

\bibitem{OttStenlundYoung}
William Ott, Mikko Stenlund, and Lai-Sang Young.
\newblock Memory loss for time-dependent dynamical systems.
\newblock {\em Mathematical Research Letters}, 16(3):463--475, 2009.
\newblock Available from:
  \url{http://www.intlpress.com/_newsite/site/pub/pages/journals/items/mrl/con%
tent/vols/0016/0003/00020435/index.php}.

\bibitem{Sinai_1970}
Yakov Sinai.
\newblock Dynamical systems with elastic reflections.
\newblock {\em Russian Mathematical Surveys}, 25(2):137, 1970.
\newblock Available from: \url{http://stacks.iop.org/0036-0279/25/i=2/a=R05}.

\bibitem{Stenlund_2011}
Mikko Stenlund.
\newblock Non-stationary compositions of {A}nosov diffeomorphisms.
\newblock {\em Nonlinearity}, 24:2991--3018, 2011.
\newblock \href {http://dx.doi.org/doi:10.1088/0951-7715/24/10/016}
  {\path{doi:doi:10.1088/0951-7715/24/10/016}}.

\bibitem{Stenlund_2012}
Mikko Stenlund.
\newblock A vector-valued almost sure invariance principle for {S}inai
  billiards with random scatterers.
\newblock 2012.
\newblock Preprint arXiv:1210.0902.
\newblock Available from: \url{http://arxiv.org/abs/1210.0902}.

\bibitem{Young_1998}
Lai-Sang Young.
\newblock Statistical properties of dynamical systems with some hyperbolicity.
\newblock {\em Ann. of Math. (2)}, 147(3):585--650, 1998.
\newblock Available from: \url{http://dx.doi.org/10.2307/120960}, \href
  {http://dx.doi.org/10.2307/120960} {\path{doi:10.2307/120960}}.

\bibitem{Young_1999}
Lai-Sang Young.
\newblock Recurrence times and rates of mixing.
\newblock {\em Israel J. Math.}, 110:153--188, 1999.
\newblock Available from: \url{http://dx.doi.org/10.1007/BF02808180}, \href
  {http://dx.doi.org/10.1007/BF02808180} {\path{doi:10.1007/BF02808180}}.

\end{thebibliography}
\bibliographystyle{plainurl}


\vspace*{\fill}


\end{document}